\newtheorem{teo}{Theorem}
\newtheorem{deff}[teo]{Definition}
\newtheorem{prop}[teo]{Proposition}
\newtheorem{lemma}[teo]{Lemma}
\newtheorem{cor}[teo]{Corollary}
\newcommand{\Kappa}{\mathcal{K}}
\newcommand{\Deelta}{\mathcal{D}}
\newcommand{\calt}{{\mathcal T}}
\newcommand{\ce}{ \mathcal{E}}
\newcommand{\cf}{ \mathcal{F}}
\newcommand{\cep}{{ \mathcal{E}'}}
\newcommand{\epsil}{\varepsilon}
\newcommand{\bs}{ \backslash }
\newcommand{\ceo}{{ \mathcal{E}_0 }}
\newcommand{\ceop}{ {\mathcal{E}_0' }}
\newcommand{\tx}{{\widetilde{X}}}
\newcommand{\hx}{{\widehat{X}}}
\newcommand{\xid}{{{(X,\widehat {X},\widetilde{X})}}}
\newcounter{contfig}
\newenvironment{figurapng}[3]{\refstepcounter{contfig}\par\vspace{8pt}
\centerline{\includegraphics[scale=#1]{#2}} \centerline{Figure
\thecontfig. #3}}{\vspace{8pt}}
\newenvironment{rem}{\refstepcounter{teo}\noindent\textbf{Remark \theteo.}\par

\vspace{-4pt}\begin{itemize}
\setlength{\itemsep}{-2pt}\item}{\end{itemize}}
\newenvironment{rems}{\refstepcounter{teo}\noindent\textbf{Remarks \theteo.}

\par\vspace{-4pt}\begin{itemize}
\setlength{\itemsep}{-2pt}\item}{\end{itemize}}
\newcommand{\Nset}{\mathbb{N}}
\newcommand{\Rset}{\mathbb{R}}
\DeclareMathOperator{\asdim}{asdim}
\newcommand{\dd}{{\ddabc}}
\DeclareMathOperator{\ddabc}{d} \DeclareMathOperator{\BB}{B}
\DeclareMathOperator{\BBC}{{\overline B}}
\DeclareMathOperator{\mult}{mult} \DeclareMathOperator{\diam}{diam}
\DeclareMathOperator{\mesh}{mesh}
\title{Dugundji's Canonical Covers, asymptotic and covering dimension}
\author{Jesús P. Moreno-Damas}
\date{}
\begin{document}

\maketitle

\parbox{\textwidth}
{\abstract{Given a nowheredense closed subset $X$ of a metrizable
compact space $\tx$, we characterize the dimension of $X$ in terms
of the multiplicity of the canonicals covers of the complementary of
$X$, specially in some particular cases, like when $\tx$ is the
Hilbert cube or the finite dimensional cube and $X$, a Z-set of
$\tx$. In this process, we solve some questions in the literature.}}

\section{Introduction}

This paper reinforces the relation between the canonical covers,
used by Dugundji in \cite{dug2} to prove his famous Extension
Theorem, and the way to describe the dimension of certain subsets of
compact spaces in terms of the corresponding complement. Such a
relation was treated in \cite{cc} in the special case when a compact
metrizable space is embedded as a Z-set in the Hilbert cube.
Concretely, in \cite{cc}, the authors proved that a compact Z-subset
of the Hilbert cube has finite topological dimension if and only if
there exists a canonical cover of finite order in its complement
(for us, the order of a cover $\alpha$ is $1$ less the supremum of
the number elements of $\alpha$ with non empty intersection, that
is, $1$ less the multiplicity of $\alpha$.)

The above kind of relations are in the core of so called Higson-Roe
functor with allows us to compare coarse properties of a coarse
structure with topological properties of the corresponding
Higson-Roe corona (see \cite{cg} and \cite{zs} for a specific
example). In particular,
asymptotic dimensional properties with covering dimensional properties. Several works
go in this way, for example \cite{dr1}, \cite{dr3}, \cite{dr2} and \cite{dr4}.

When the coarse structure is the topological one attached to a
metrizable compactification, Grave proved in \cite{grv} and
\cite{grv2} that the asymptotic dimension of that coarse structure
exceeds just by $1$ the covering dimension of the corona. That
result allowed the authors in \cite{zs} to prove that the gap
between the $C_0$-asymptotic dimension of the complement of a Z-set
in the Hilbert and the topological dimension of the corresponding
Z-set is precisely $1$.

As proved in \cite{cc}, the particularity of the canonical covers in
the complementary is that every one has information about the
dimension of the Z-set. In fact, what was really proved in that
article is that, if a compact Z-set in the Hilbert cube has
dimension $n$, then the order of any canonical cover of the
complement is greater than or equal to $n$ and there exists a
canonical cover which order is less than or equal to $2n+1$.

As pointed out in that article, the definition of canonical cover is in the core of the $C_0$
coarse geometry (see \cite{cg}, \cite{wright} and \cite{wright3} for definitions).
It is natural to ask if, for every canonical cover, the gap between its order and the dimension of the Z-set is at least $1$ and, as it was did in \cite{cc} page
3713, if one can always find a canonical cover
whose gap is just $1$.

In this paper, we use canonical covers to strength and improve
relations pointed out in \cite{cc}. In particular, we are able to
work in a more general framework and to answer positively the
questions mentioned above. We also describe accurately the relations
between the canonical covers and the $C_0$ coarse structure. Finally,
we use our results here to give an easier proof of a Grave's result
in \cite{grv} and \cite{grv2}, relating the topological dimension of the corona with the
corresponding asymptotic dimension.

In section 2 we introduce the basic needed definitions and notation
and state an easy criterium to detect coarse equivalences.

Section 3, which is the core of the paper, contains the main
technical tools and results in this paper. Firstly, we relate the
canonical covers with the coarse structures, getting that a
canonical cover is precisely an open and locally finite cover which
is uniform for the topological coarse structure attached to a
compactification. In our case, that compactification is metrizable
and that means that the cover is uniform for the $C_0$ coarse
structure. That states the relation pointed out in \cite{cc}.

Taking it into account, we are able to relate the multiplicity of
canonical covers with the asymptotic dimension of the $C_0$ coarse
spaces, using the Grave's result mentioned above, to obtain:

\begin{enumerate}[(A) ]
\item If $X$ is a nowheredense closed subset of a metrizable compact
space $\tx$,  then $\dim X\leq n$ if and only if for every canonical
cover $\alpha$ of $\tx\bs X$, there exists a canonical cover $\beta$
which has $\alpha$ as a refinement and whose order is less
than or equal to $n+1$.\end{enumerate}

To improve this equivalence when $\tx$
is the Hilbert cube and $X$ is a Z-set of $\tx$, we define a more general category, the
\emph{cylindrical subsets}. This allows us to extends results which work
for the Z-sets of the Hilbert cube, to other cases, like the Z-sets
of the finite dimensional cube $[0,1]^n$. We get:

\begin{enumerate}[(B) ]
\item If $X$ is a cylindrical, nowheredense and closed subset of a metrizable
compact space $\tx$ (in particular, if $X$ is a Z-set of $\tx$ and $\tx$ is the Hilbert Cube
or the finite dimensional cube), then $\dim X\leq n$
if and only if there exists a canonical cover of $\tx\bs X$ whose
order is less than or equal to $n+1$.\end{enumerate}

Actually, (B) is satisfied not only for canonical covers, but also for
open and $C_0$-uniform covers. Consequently, the order of every open and uniform cover
is at least the $C_0$-asymptotic dimension. That means that, when $X$ is a cylindrical subset,
from the asymptotic dimensional point of view, the open and uniform covers
are big enough. That sugests a natural question: In the general case,
when is a uniform cover big enough from the asymptotic dimensional
point of view? We finish the section by answering that.

In section 4 we recover Grave's result in an easier way. To do it,
we give some results in Topological Dimension Theory
(one of them, developed by us in \cite{moreno}).

\section{Preliminaries: Basic definitions and notations}

We say that \textit{compactification pack}  is a vector $\xid$ such
that $\tx$ is a compact Hausdorff space, $X$ is a nowheredense
closed subset of $\tx$ and $\hx=\tx\bs X$. Observe that $\tx$ is a
compactification of $\hx$ and $X$ is its corona.

For us, a cover $\alpha$ of a set $Z$ is a collection of subsets of
$Z$ whose union is $Z$.

If $Z$ is a topological space and $\alpha$ is a family of subsets of
$Z$, we say that $\alpha$ is open if every $U\in\alpha$ is open. If
$Z$ is a metric space, we say that $\mesh\alpha=\sup\{\diam
U:U\in\alpha\}$.

If and $A\subset Z$ and $\alpha$ and $\beta$ are families of subsets
of $Z$, we denote $\alpha(A)=\bigcup_{\substack{U\in\alpha\\U\cap
A\neq\varnothing}}U$. By $\beta\prec\alpha$ we mean that $\beta$ is
a refinement of $\alpha$. The multiplicity of $\alpha$ is
$\mult\alpha=\sup \{\#A:A\subset\alpha,\bigcap_{U\in
A}U\neq\varnothing\}$ (where, for every set $B$, $\# B$ means the
cardinal of $B$).

The concept of \textit{canonical cover} was used by Dugundji in
\cite{dug2}. In our language, a canonical cover $\alpha$ of a
compactification pack $\xid$ is an open and locally finite cover of
$\hx$ such that for every $x\in X$ and every neighborhood $V_x$ of
$x$ in $\tx$ there exists a neighborhood $W_x$ of $x$ in $\tx$ with
$\alpha(W_x)\subset V_x$.

Let us give some definitions of coarse geometry. For more
information, see \cite{cg}. Let $E,F\subset Z\times Z$, let $x\in Z$
and let $K\subset Z$. The product of $E$ and $F$, denoted by $E\circ
F$, is the set $\{(x,z):\exists y\in Z\textrm{ such that }(x,y)\in
E,(y,z)\in F\}$, the inverse of $E$, denoted by $E^{-1}$, is the set
$E^{-1}=\{(y,x):(x,y)\in E\}$, the diagonal, denoted by $\Delta$, is
the set $\{(z,z):z\in Z\}$. If $x\in Z$, the $E$-ball of $x$,
denoted by $E_x$ is the set $E_x=\{y:(y,x)\in E\}$ and, if $K\subset
Z$, $E(K)$ is the set $\{y:\exists x\in K\textrm{ such that }
(y,x)\in E\}$. We say that $E$ is symmetric if $E=E^{-1}$.

A coarse structure $\ce$ over a set $Z$ is a family of subsets of
$Z\times Z$ which contains the diagonal and is closed under the
formation of products, finite unions, inverses and subsets. The
elements of $\ce$ are called controlled sets. $B\subset Z$ is said
to be bounded if there exists $x\in Z$ and $E\in\ce$ with $B=E_x$
(equivalently, $B$ is bounded if $B\times B\in\ce$).

A map $f:(Z,\ce)\rightarrow (Z',\cep)$ between coarse spaces is
called coarse if $f\times f(E)$ is controlled for every controlled
set $E$ of $Z$ and $f^{-1}(B)$ is bounded for every bounded subset
$B$ of $Z'$. We say that $f$ is a coarse equivalence if $f$ is
coarse and there exists a coarse map $g:(Z',\cep)\rightarrow
(Z,\ce)$ such that $\{(g\circ f(x),x):x\in Z\}\in\ce$ and $\{(f\circ
g(y),y):y\in Z'\}\in \cep$. In this case, $g$ is called a coarse
inverse of $f$.

A subset $A\subset Z$ is coarse dense if $Z=E(A)$ for some $E\in\ce$ .

Given $E\subset Z\times Z$, we denote $\Kappa(E)=\{E_x:x\in Z\}$. If
$\alpha$ is a family of subsets of $Z$, we say that
$\Deelta(\alpha)=\bigcup_{U\in\alpha}U\times U$ (observe that
$(x,y)\in\Deelta(\alpha)$ if and only if $x,y\in U$ for some
$U\in\alpha$).

A family of subsets $\alpha$ of $Z$ is called uniform if
$\Deelta(\alpha)\in\ce$. Note that, if $E\in\ce$, then $\Kappa(E)$
is uniform. Dydak and Hoffland showed in \cite{lss} that the coarse
structures can be described in terms of the uniform covers.

Intuitively, the uniform families of subsets of $(Z,\ce)$ behave
like the controlled sets. For example, suppose that
$f:(Z,\ce)\rightarrow (Z',\ce')$ is a coarse map between are coarse
spaces, $E$ is a controlled set of $(Z,\ce)$, $\alpha$ is a uniform
family of subsets of $(Z,\ce)$ and $\beta$ is a family of subsets of
$Z$ such that $\beta\prec\alpha$. Then, $E(\alpha)$ and $\beta$ are
uniform families of subsets of $(Z,\ce)$ and
$f(\alpha)=\{f(U):U\in\alpha\}$ is a uniform family of subsets of
$(Z',\ce')$.  For every $B\subset Z$, $B$ is bounded if and only if
there exists a uniform family of subsets $\gamma$ of $(Z,\ce)$ such
that $B\in\gamma$ (equivalently, if $\{B\}$ is uniform).

If $Z$ is a topological space and $E\subset Z\times Z$, we say that
$E$ is proper if $E(K)$ and $E^{-1}(K)$ are relatively compact for
every relatively compact subset $K\subset Z$. If $\ce$ is a coarse
structure over $Z$, we say that $(Z,\ce)$ is a proper coarse space
if $Z$ is Hausdorff, locally compact and paracompact, $\ce$ contains
a neighborhood of the diagonal in $Z\times Z$ and the bounded
subsets of $(Z,\ce)$ are precisely the relatively compact subsets of
$Z$.

By $Q$ we denote the Hilbert cube $[0,1]^\Nset$. Let $\tx=[0,1]^n$
with $n\in\Nset$ or $\tx=Q$ and suppose that $\dd$ is a metric in
$\tx$. $X$ is a Z-set of $\tx$ if it is closed and for every
$\epsil>0$ there exists a continuous function $f:\tx\rightarrow\tx$
such that $\dd'(f,Id)<\epsil$
---where $\dd'$ is the supreme metric--- and $f(\tx)\cap
X=\varnothing$ (the definition of Z-set given in \cite{hc}, chapter
I-3, pag. 2, is equivalent in this context).

For us, an increasing function between two ordered sets
$f:(X,<)\rightarrow (X',<')$ is a function such that $x<y$ implies
$f(x)\leq' f(y)$.

The following identities will be useful along this article. Let
$Z,Z'$ be sets, suppose $x\in Z$, $x'\in Z'$, $A,B\subset Z$ and
 $E,F\subset Z\times Z$. Consider a family of subsets $\alpha$ of $Z$ and a map $f:Z\rightarrow Z'$. Then:

\begin{equation}\label{usefulidentities1}
\Deelta(\alpha)(A)=\alpha(A)
\end{equation}
\begin{equation}\label{usefulidentities2}
(E\circ F)_x=E(F_x)
\end{equation}
\begin{equation}\label{usefulidentities6}
E(A)=\bigcup_{a\in A}E_a
\end{equation}
\begin{equation}\label{usefulidentities3}
E(A)\cap B\neq\varnothing \textrm{ if and only if }
A\cap E^{-1}(B)\neq\varnothing.
\end{equation}
\begin{equation}\label{usefulidentities4}
E(B)\subset A \textrm{ if and only if } E\cap (Z\bs A)\times
B=\varnothing
\end{equation}
\begin{equation}\label{usefulidentities5}
(f\times f(E))_{x'}=\bigcup_{a\in f^{-1}(x')}f(E_a)=f(E(f^{-1}(x')))
\end{equation}

The following proposition provides a criterion to detect coarse
equivalences:

\begin{prop}\label{criterio} If
$f:(X,\ce)\rightarrow (Y,\cf)$ is a map between coarse spaces, then
\begin{enumerate}[a) ]
\item $f$ is a coarse equivalence.
\item There exists $g:Y\rightarrow X$ such that $f\times
f(E)\in \cf$ for every $E\in\ce$, $g\times g(F)\in\ce$ for every
$F\in\cf$, $\{(x,g\circ f(x)):x\in X\}\in\ce$ and $\{(y,f\circ
g(y)):y\in Y\}\in\cf$.
\item $f\times f(E)\in\cf$ for every $E\in\ce$,
$(f\times f)^{-1}(F)\in\ce$ for every $F\in\cf$ and there exists
$g:Y\rightarrow X$ such that $\{(y,f\circ g(y)):y\in Y\}\in\cf$.
\item $f\times f(E)\in\cf$ for every $E\in\ce$,
$(f\times f)^{-1}(F)\in \ce$ for every $F\in\cf$ and $f(X)$ is
coarse dense in $(Y,\cf)$. \end{enumerate} are equivalent. Moreover,
if $g$ is the map of b) or c), then $g$ is a coarse inverse of
$f$.\end{prop}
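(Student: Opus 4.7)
The plan is to establish the cyclic chain (a) $\Rightarrow$ (b) $\Rightarrow$ (c) $\Rightarrow$ (d) $\Rightarrow$ (a), picking up the ``moreover'' clause along the way. The implication (a) $\Rightarrow$ (b) is essentially the definition: taking $g$ to be a coarse inverse of $f$ yields $f \times f(E) \in \cf$ and $g \times g(F) \in \ce$ because both maps are coarse, while the closeness sets $\{(x, g \circ f(x)) : x \in X\}$ and $\{(y, f \circ g(y)) : y \in Y\}$ lie in the appropriate structures by the very definition of coarse inverse. The implication (c) $\Rightarrow$ (d) is also immediate, since the set $E_0 := \{(y, f \circ g(y)) : y \in Y\} \in \cf$ satisfies $Y = E_0(f(X))$ trivially (every $y$ is paired via $E_0$ with $f(g(y)) \in f(X)$), so $f(X)$ is coarse dense.

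For (b) $\Rightarrow$ (c), the only new point is $(f \times f)^{-1}(F) \in \ce$. Writing $D = \{(x, g \circ f(x)) : x \in X\} \in \ce$, if $(f(x), f(x')) \in F$ then telescoping through $(x, g \circ f(x)) \in D$, $(g \circ f(x), g \circ f(x')) \in g \times g(F)$, and $(g \circ f(x'), x') \in D^{-1}$ gives $(f \times f)^{-1}(F) \subset D \circ (g \times g(F)) \circ D^{-1} \in \ce$.

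The main step, and the main obstacle, is (d) $\Rightarrow$ (a): it is the only direction that requires producing a coarse inverse from scratch rather than reading it off the hypotheses, and one has to verify not just preservation of controlled sets but also the bounded-preimage clauses hidden inside the word ``coarse''. Using that $f(X)$ is coarse dense, I would fix $E_0 \in \cf$ with $Y = E_0(f(X))$ and, by the axiom of choice, define $g : Y \to X$ so that $(y, f \circ g(y)) \in E_0$ for every $y$; thus $\{(y, f \circ g(y)) : y \in Y\} \subset E_0 \in \cf$. The remaining conditions of (a) are then verified by short chases using the identities at the end of the preliminaries: (i) $g \times g(F) \subset (f \times f)^{-1}(E_0^{-1} \circ F \circ E_0) \in \ce$ by the preimage hypothesis; (ii) $\{(x, g \circ f(x)) : x \in X\} \subset (f \times f)^{-1}(E_0) \in \ce$ similarly; and (iii) for bounded preimages, $f^{-1}(B) \times f^{-1}(B) = (f \times f)^{-1}(B \times B) \in \ce$ while $g^{-1}(B) \times g^{-1}(B) \subset E_0 \circ (f(B) \times f(B)) \circ E_0^{-1} \in \cf$, where $f(B) \times f(B) = f \times f(B \times B) \in \cf$. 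The ``moreover'' clause comes for free, since the $g$ that appears in each step is precisely the coarse inverse of $f$ provided by (a).
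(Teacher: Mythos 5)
Your proof is correct in substance and the telescoping‐inclusion computations are exactly the right idea; where you diverge from the paper is in the global organization. The paper proves each pairwise equivalence a)$\Leftrightarrow$b), b)$\Leftrightarrow$c), c)$\Leftrightarrow$d) separately, so d)$\Rightarrow$c) only needs the choice construction to produce a $g$ with $\{(y,f\circ g(y))\}$ controlled, and the bounded-preimage clauses and $g\times g(F)\in\ce$ are then handled once in the c)$\Rightarrow$b)$\Rightarrow$a) chain. You instead close the single cycle a)$\Rightarrow$b)$\Rightarrow$c)$\Rightarrow$d)$\Rightarrow$a), which forces you to verify all the coarse-map conditions in one go inside d)$\Rightarrow$a); this is perfectly valid, and your inclusions $g^{-1}(B)\times g^{-1}(B)\subset E_0\circ(f(B)\times f(B))\circ E_0^{-1}$ and $g\times g(F)\subset(f\times f)^{-1}(E_0^{-1}\circ F\circ E_0)$ are the same estimates the paper uses, just packaged differently. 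Your construction of $g$ by choice is essentially the paper's partition argument.

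One point is too breezy: the ``moreover'' clause. You say it ``comes for free since the $g$ that appears in each step is precisely the coarse inverse provided by (a),'' but that is not literally true in your cyclic chain — your d)$\Rightarrow$a) step builds a fresh $g$, which need not coincide with a $g$ handed to you in hypothesis (b) or (c). The clause is really asserting: \emph{any} $g$ satisfying (b) or (c) is already a coarse inverse, and to get this you must run your bounded-preimage and $g\times g(F)$ estimates with that given $g$ (using $E_0=\{(y,f\circ g(y))\}$) rather than with the one you constructed. Your inequalities do apply verbatim in that setting, so the gap is one of exposition rather than mathematics, but the justification as written is not a proof of the ``moreover'' claim.
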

\begin{proof} Throughout the proof, every time we use a map called $g:Y\rightarrow X$,
we will denote by $G$ and $H$ the sets
$$G=\{(x,g\circ f(x)):x\in X\}$$
$$H=\{(y,f\circ g(y)):y\in Y\}$$

It is obvious that a) implies b).

To see that b) implies a), it is suffices to show that $f^{-1}(U)$
is bounded for every bounded subset $U$ of $X$ and $g^{-1}(V)$ is
bounded for every bounded subset $V$ of $Y$. But it is easily
deduced from:
$$f^{-1}(V)\times f^{-1}(V)\subset G\circ (g\times g(V\times V))\circ
G^{-1}\in\ce$$
$$g^{-1}(U)\times g^{-1}(U)\subset H\circ (f\times f(U\times U))\circ
H^{-1}\in\cf$$

Moreover, $g$ is a coarse inverse of $f$.

To see that b) implies c) it is sufficient to show that $(f\times
f)^{-1}(F)\in \ce$ for every $F\in \cf$. But it follows from:
$$(f\times f)^{-1}(F)\subset G\circ (g\times g(F))\circ
G^{-1}\in\ce$$

To see that c) implies b), it is sufficient to show that $g\times
g(F)\in\ce$ for every $F\in\cf$ and that $G\in\ce$. But it follows
from:
$$g\times g(F)\subset (f\times f)^{-1}(H^{-1}\circ F\circ H)\in\ce$$
$$G\in (f\times f)^{-1}(H)\in\ce$$

Moreover, since $g$ satisfies b), $g$ is a coarse inverse of $f$.

To see that c) implies d), it suffices to show that $f(X)$ is coarse
dense in $(Y,\cf)$. And it happens, since $Y=H(f(X))$.

To see that d) implies c) we only need to define a map
$g:Y\rightarrow X$ such that $H\in\cf$. To show that, take $M\in\cf$
such that $Y=M(f(X))$. By (\ref{usefulidentities6}),
$Y=\bigcup_{y\in f(X)}M_y$, so there exists a partition of $Y$
 $\{P_y:y\in f(X)\}$ such that
$P_y\subset M_y$ for every $y\in f(X)$. Choose $x_y\in f^{-1}(y)$
for every $y\in f(X)$. Let $g:Y\rightarrow X$ be the map such that,
for every $y\in f(X)$ and every $y'\in P_y$, $g(y')=x_y$.

Observe that $(y',f\circ g(y'))=(y',f(x_y))=(y',y)\in
P_y\times\{y\}\subset M_y\times\{y\}\subset M$. Then, $H\subset
M\in\cf$ and $H\in\cf$. \end{proof}

\begin{cor} If $f:(X,\ce)\rightarrow (Y,\cf)$ is a bijective coarse equivalence, then $f^{-1}$ is a coarse inverse of $f$.\end{cor}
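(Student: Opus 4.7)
The plan is to apply Proposition \ref{criterio} with the specific choice $g = f^{-1}$, relying on the ``Moreover'' clause to conclude that this $g$ is automatically a coarse inverse. So everything reduces to verifying that $f^{-1}$ fulfills the conditions that c) imposes on the map $g$.

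First I would use the hypothesis that $f$ is a coarse equivalence to invoke part a) of Proposition \ref{criterio}, and then the equivalence $a) \Leftrightarrow c)$ to extract that $f\times f(E)\in \cf$ for every $E\in\ce$ and $(f\times f)^{-1}(F)\in\ce$ for every $F\in\cf$. These are conditions on $f$ alone and do not depend on the choice of $g$, so they hold once and for all.

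Next I would observe that the only remaining requirement on $g$ in c) is that $\{(y,f\circ g(y)):y\in Y\}\in\cf$. Taking $g=f^{-1}$, bijectivity of $f$ gives $f\circ f^{-1}=\mathrm{Id}_Y$, so this set is precisely the diagonal $\Delta\subset Y\times Y$, which lies in $\cf$ by the very definition of a coarse structure. Hence $g=f^{-1}$ is an admissible choice in c), and the ``Moreover'' clause of Proposition \ref{criterio} delivers the conclusion.

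There is no real obstacle here; the ``Moreover'' clause was built into Proposition \ref{criterio} precisely to make such a bijective reduction immediate, and I expect the written-out proof to span only two or three lines.
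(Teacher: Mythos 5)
Your proof is correct and is essentially the same as the paper's: both arguments reduce to applying the ``Moreover'' clause of Proposition \ref{criterio} to $g=f^{-1}$, noting that $\{(y,f\circ f^{-1}(y)):y\in Y\}$ is the diagonal of $Y\times Y$, which is controlled. The only cosmetic difference is that the paper passes through property d) (whose two conditions on $f$ alone are exactly what you extract from c)), while you invoke a) $\Leftrightarrow$ c) directly; either route delivers the same conditions on $f$ and the same one-line verification for $g=f^{-1}$.
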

\begin{proof} Since $f$ is a coarse equivalence, it satisfies
property d) of Proposition \ref{criterio}. Then, $f$ and $f^{-1}$
satisfice property c) and hence, $f^{-1}$ is a coarse inverse of
$f$.\end{proof}

\begin{cor}\label{corofequivalpha}If $f:(X,\ce)\rightarrow (Y,\cf)$
is a coarse equivalence and $\alpha$ is a uniform cover of
$(Y,\cf)$, then $f^{-1}(\alpha)$ is a uniform cover of $X$.\end{cor}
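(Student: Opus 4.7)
The plan is to reduce the claim to the pullback identity
$\Deelta(f^{-1}(\alpha))=(f\times f)^{-1}(\Deelta(\alpha))$
and then appeal to Proposition~\ref{criterio}. Concretely, $f^{-1}(\alpha)$ denotes $\{f^{-1}(U):U\in\alpha\}$, and since $\alpha$ covers $Y$ we immediately get $\bigcup_{U\in\alpha}f^{-1}(U)=f^{-1}(Y)=X$, so $f^{-1}(\alpha)$ is at least a cover of $X$; only uniformity needs work.

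For uniformity, I would compute
$$\Deelta(f^{-1}(\alpha))=\bigcup_{U\in\alpha}f^{-1}(U)\times f^{-1}(U)=\bigcup_{U\in\alpha}(f\times f)^{-1}(U\times U)=(f\times f)^{-1}\!\left(\bigcup_{U\in\alpha}U\times U\right)=(f\times f)^{-1}(\Deelta(\alpha)).$$
Because $\alpha$ is a uniform cover of $(Y,\cf)$, $\Deelta(\alpha)\in\cf$. Since $f$ is a coarse equivalence, Proposition~\ref{criterio} (applied through condition c) or d), both of which explicitly include the clause $(f\times f)^{-1}(F)\in\ce$ for every $F\in\cf$) yields $(f\times f)^{-1}(\Deelta(\alpha))\in\ce$. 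Thus $\Deelta(f^{-1}(\alpha))\in\ce$, which is the definition of $f^{-1}(\alpha)$ being uniform in $(X,\ce)$.

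There is no real obstacle here; the only subtlety is remembering that the definition of coarse equivalence used in the paper is two-sided, which by itself would only give that $f\times f$ pushes controlled sets of $\ce$ to controlled sets of $\cf$, whereas what we need is the \emph{pullback} direction. That is precisely why Proposition~\ref{criterio} is invoked: it guarantees that a coarse equivalence automatically satisfies $(f\times f)^{-1}(\cf)\subset\ce$, closing the argument.
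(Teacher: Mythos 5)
Your argument is correct and follows exactly the same path as the paper's one-line proof: compute $\Deelta(f^{-1}(\alpha))=(f\times f)^{-1}(\Deelta(\alpha))$ and then note that pulling back controlled sets along a coarse equivalence keeps them controlled. You simply make explicit what the paper leaves implicit, namely that this pullback property is guaranteed by Proposition~\ref{criterio} (condition c) or d)) and that $f^{-1}(\alpha)$ is trivially a cover.
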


\begin{proof} Since $\Deelta(f^{-1}(\alpha))=(f\times
f)^{-1}(\Deelta(\alpha))$ is a controlled set of $(X,\ce)$, it
follows that $f^{-1}(\alpha)$ is uniform.\end{proof}

\section{Canonical Covers, Asymptotic Dimension and Dimension}

\subsection{Canonical Covers and coarse structures}

Firstly, we relate the canonical covers with the $C_0$ coarse
structures.

Recall the following definition, see \cite{cg}:

\begin{deff}\label{defcontcoarest} Let $\xid$ be a compactification pack. The
topological coarse structure $\ce$ over $\hx$ attached to the
compactification $\tx$ is the collection of all $E\subset \hx\times
\hx$ satisfying any of the following equivalent properties:
\begin{enumerate}[a) ]
\item $Cl_{\tx\times\tx}E$ meets $\tx\times\tx\bs\hx\times\hx$ only
in the diagonal of $X\times X$.

\item $E$ is proper and for every net $(x_\lambda,y_\lambda)\subset
E$, if $\{x_\lambda\}$ converges to a point $x$ of $X$, then
$y_\lambda$ converges also to $x$.

\item $E$ is proper and for every point $x\in X$ and every
neighborhood $V_x$ of $x$ in $\tx$ there exists a neighborhood $W_x$
of $x$ in $\tx$ such that $E(W_x)\subset V_x$.\end{enumerate}\end{deff}

\begin{rems}
In this coarse structure, the bounded subsets of $\ce$ are precisely
the relatively compact subsets of $\hx$. Moreover, if $\tx$ is
metrizable, then $\ce$ is proper.

\item The definition above is Proposition 2.27 and Definition 2.28 of \cite{cg}
(pags. 26-27) together with the author's correction in \cite{cg3}.
The property c) above is not the one of that proposition, but it is
equivalent (see (\ref{usefulidentities4})).\end{rems}

\begin{prop}\label{caractce} Let $\xid$ be a compactification pack. Suppose that $\ce$ is the topological coarse
structure over $\hx$ attached to the compactification $\tx$ and let
$E\subset \hx\times \hx$. Then, $E\in\ce$ if and only if, for every
$x\in X$ and every neighborhood $V_x$ of $x$ in $\tx$, there exist a
neighborhood $W_x$ of $x$ in $\tx$ such that $(E\cup
E^{-1})(W_x)\subset V_x$.
\end{prop}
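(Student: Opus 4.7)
The plan is to prove the two directions separately, with the forward direction being essentially formal from the axioms of a coarse structure, and the backward direction reducing to Definition \ref{defcontcoarest}(c).

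For the forward direction, I would note that $\ce$ is closed under inverses and finite unions, so $E\in\ce$ implies $E\cup E^{-1}\in\ce$. Applying property (c) of Definition \ref{defcontcoarest} to $E\cup E^{-1}$ with $x\in X$ and a neighborhood $V_x$ produces $W_x$ with $(E\cup E^{-1})(W_x)\subset V_x$, which is exactly what we need.

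For the backward direction, the goal is to verify property (c) of Definition \ref{defcontcoarest} for $E$. The neighborhood condition is immediate, since $E(W_x)\subset(E\cup E^{-1})(W_x)\subset V_x$. The real work is showing $E$ is proper, i.e.\ that $E(K)$ and $E^{-1}(K)$ are relatively compact in $\hx$ whenever $K\subset\hx$ is. I would argue as follows: since $\overline K^{\hx}$ is compact it is closed in $\tx$ and disjoint from $X$; for each $x\in X$, by normality of $\tx$, choose an open neighborhood $V_x$ of $x$ in $\tx$ with $V_x\cap\overline K=\varnothing$, and apply the hypothesis to get $W_x$ with $(E\cup E^{-1})(W_x)\subset V_x$. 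Using identity (\ref{usefulidentities3}), if $E(K)\cap W_x\neq\varnothing$ then $K\cap E^{-1}(W_x)\neq\varnothing$, but $E^{-1}(W_x)\subset(E\cup E^{-1})(W_x)\subset V_x$ is disjoint from $K$, a contradiction; similarly $E^{-1}(K)\cap W_x=\varnothing$. Hence each $x\in X$ has a $\tx$-neighborhood missing both $E(K)$ and $E^{-1}(K)$, so the closures of these sets in $\tx$ avoid $X$, lie in $\hx$, and are compact as closed subsets of the compact space $\tx$.

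The main obstacle is precisely this properness argument: the symmetric hypothesis does not obviously contain properness as a consequence, so one must bridge the gap between a purely local condition at each corona point and the global compactness statement on $E(K)$ and $E^{-1}(K)$. The key observation making this work is that the same $V_x$ can be used to control both $E(W_x)$ and $E^{-1}(W_x)$, which, via identity (\ref{usefulidentities3}), converts the neighborhood condition at $x$ into a statement of the form ``no point of $K$ is $E$-related to any point of $W_x$,'' exactly what properness needs.
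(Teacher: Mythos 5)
Your proposal is correct and follows essentially the same route as the paper: the forward direction uses closure under inverses and unions plus property (c) of the definition, and the backward direction reduces to properness, which you establish by covering $X$ with $\tx$-neighborhoods $W_x$ that miss $E(K)\cup E^{-1}(K)$ (via identity (\ref{usefulidentities3})) and then taking closures inside the compact space $\tx$. The only cosmetic difference is that you explicitly work with $\overline K$ and verify $E(K)\cap W_x=\varnothing$ and $E^{-1}(K)\cap W_x=\varnothing$ separately, whereas the paper takes $V_x=\tx\bs K$ and exploits the symmetry of $E\cup E^{-1}$ to handle the union in one step.
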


\begin{proof}
If $E\in\ce$, then also $E\cup E^{-1}\in \ce$, thus $E\cup E^{-1}$
satisfies property c) of definition \ref{defcontcoarest} and we get
the necessity.

Let us see the sufficiency. To prove that $E$ satisfies property c)
of definition \ref{defcontcoarest}, it suffices to show that $E$ is
proper, that is, $E(K)\cup E^{-1}(K)$ is relatively compact in $\hx$
for every relatively compact subset $K$ of $\hx$.

Take a relatively compact subset $K$ of $\hx$. Fix $x\in X$. Since
$\tx\bs K$ is a neighborhood of $x$ in $\tx$, there exists an open
neighborhood $W_x$ of $x$ in $\tx$ such that $(E\cup
E^{-1})(W_x)\subset \tx\bs K$, i. e. $K\cap (E\cup
E^{-1})(W_x)=\varnothing$. By (\ref{usefulidentities3}), it is
equivalent to $(E\cup E^{-1})(K)\cap W_x=\varnothing$. Since
$\bigcup_{x\in X}W_x$ is an open neighborhood of $X$ in $\tx$ with
$(E\cup E^{-1})(K)\cap \left(\bigcup_{x\in
X}W_x\right)=\varnothing$, we have that $(E\cup E^{-1})(K)=E(K)\cup
E^{-1}(K)$ is relatively compact in $\hx$.
\end{proof}

\begin{cor}\label{caractcanon} Let $\xid$ be a compactification pack and suppose that $\ce$ is the topological
coarse structure over $\hx$ attached to the compactification $\tx$.
If $\alpha$ is a family of subsets of $\hx$, then $\alpha$ is
uniform if and only if for every $x\in X$ and every neighborhood
$V_x$ of $x$ in $\tx$, there exists a neighborhood $W_x$ of $x$ in
$\tx$ such that $\alpha(W_x)\subset V_x$.

Particulary, $\alpha$ is a canonical cover of $\xid$ if and only if
it is an open and locally finite cover of $\hx$ which is controlled
for $\ce$.
\end{cor}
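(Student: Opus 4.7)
The proof is essentially an application of Proposition \ref{caractce} to the specific controlled set $\Deelta(\alpha)$, exploiting that this set is automatically symmetric. Recall that $\alpha$ is uniform by definition when $\Deelta(\alpha)\in\ce$, so the first equivalence reduces to characterizing membership of $\Deelta(\alpha)$ in $\ce$ via the trace condition at the corona.

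The plan is as follows. First I would note that $\Deelta(\alpha)=\bigcup_{U\in\alpha}U\times U$ is symmetric, so $\Deelta(\alpha)\cup\Deelta(\alpha)^{-1}=\Deelta(\alpha)$. Applying Proposition \ref{caractce} with $E=\Deelta(\alpha)$ then gives that $\Deelta(\alpha)\in\ce$ if and only if, for every $x\in X$ and every neighborhood $V_x$ of $x$ in $\tx$, there exists a neighborhood $W_x$ of $x$ in $\tx$ with $\Deelta(\alpha)(W_x)\subset V_x$. Finally, identity (\ref{usefulidentities1}) rewrites $\Deelta(\alpha)(W_x)$ as $\alpha(W_x)$, producing exactly the condition stated in the corollary. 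This settles the first equivalence.

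For the second statement, I would unwind the definition: a canonical cover of $\xid$ is by definition an open, locally finite cover of $\hx$ satisfying the neighborhood-shrinking property $\alpha(W_x)\subset V_x$. By the equivalence just proved, that property is exactly saying that $\Deelta(\alpha)\in\ce$, i.e., that $\alpha$ is uniform (which is what ``controlled for $\ce$'' means at the level of covers). Hence the class of canonical covers coincides with the class of open, locally finite covers of $\hx$ that are uniform for $\ce$.

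I do not anticipate a real obstacle here; the only subtle point is recognizing that Proposition \ref{caractce} is stated in terms of $E\cup E^{-1}$ rather than $E$ itself, which is harmless because $\Deelta(\alpha)$ is already symmetric. Once that observation is made, both equivalences follow by direct substitution and the identity (\ref{usefulidentities1}).
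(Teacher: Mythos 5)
Your proof is correct and follows essentially the same route as the paper: apply Proposition \ref{caractce} to the symmetric set $\Deelta(\alpha)$, use identity (\ref{usefulidentities1}) to rewrite $\Deelta(\alpha)(W_x)$ as $\alpha(W_x)$, and then unwind the definition of canonical cover for the second statement.
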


\begin{proof} $\alpha$ is uniform if and only if $\Deelta(\alpha)\in
\ce$. By Proposition \ref{caractce}, (\ref{usefulidentities1}) and
the symmetry of $\Deelta(\alpha)$, that is equivalent to say that
for every $x\in X$ and every neighborhood of $x$ in $\tx$ there
exists a neighborhood $W_x$ of $x\in\tx$ such that
$\Deelta(\alpha)(W_x)\subset V_x$, i. e. $\alpha(W_x)\subset V_x$.
\end{proof}

Recall the following definition of Wright in \cite{wright} or
\cite{wright3} (see also of example 2.6 of \cite{cg}, pag. 22):

\begin{deff} Let $(\hx,\dd)$ be a metric space. The $C_0$ coarse
structure is the collection of all subsets $E\subset\hx\times\hx$
such that for every $\epsil>0$ there exists a compact subset $K$ of
$\hx$ such that $\dd(x,y)<\epsil$ whenever $(x,y)\in E\bs K\times
K$.\end{deff}

We have the following using the same argument as in \cite{zs}:

\begin{prop}
Let $\xid$ be a metrizable compactification pack and let $\dd$ be a
metric on $\tx$. Then, the topological coarse structure over $\hx$
attached to the compactification $\tx$ is the $C_0$ coarse structure
over $\hx$ attached to $\dd$.\end{prop}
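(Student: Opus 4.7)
The plan is to verify both inclusions between the topological coarse structure $\ce$ attached to the compactification $\tx$ and the $C_0$ coarse structure $\ceo$ determined by $\dd$ on $\hx$.

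For $\ceo\subseteq\ce$, I would fix $E\in\ceo$ and verify property c) of Definition \ref{defcontcoarest}. Given a relatively compact $K\subset\hx$, the distance $d=\dd(\overline K,X)$ is positive (since $\overline K$ is a compact subset of $\tx$ disjoint from the closed set $X$), so applying the $C_0$ condition with threshold $d/2$ produces a compact $K'\subset\hx$ with $\dd(a,b)<d/2$ whenever $(a,b)\in E\bs K'\times K'$. Then for any $(a,b)\in E$ with $a\in K$, either $b\in K'$ or $\dd(b,X)\geq d/2$, and in either case $b$ lies in a fixed compact subset of $\hx$; the symmetric bound handles $E^{-1}(K)$, so $E$ is proper. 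For the neighborhood condition, given $x\in X$ and $\epsil>0$, I apply the $C_0$ condition with threshold $\epsil/2$, obtain a compact $K$, and take $W_x=\BB(x,\delta)$ with $\delta=\min(\epsil/2,\dd(x,K))$: every $a\in W_x\cap\hx$ lies outside $K$, which forces $\dd(a,b)<\epsil/2$ and hence $\dd(b,x)<\epsil$ for every $(a,b)\in E$.

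For the reverse inclusion I argue by contradiction. Suppose $E\in\ce$ is not in $\ceo$; then there is $\epsil>0$ for which, for every compact $K\subset\hx$, some $(a,b)\in E\bs K\times K$ satisfies $\dd(a,b)\geq\epsil$. Fix an exhaustion $K_n$ of $\hx$ by compact subsets with each $K_n$ in the interior of $K_{n+1}$, and choose $(x_n,y_n)\in E\bs K_n\times K_n$ with $\dd(x_n,y_n)\geq\epsil$. Metrizability of $\tx$ lets me pass to a subsequence so that $x_n\to x$ and $y_n\to y$ in $\tx$, with $\dd(x,y)\geq\epsil$, so in particular $x\neq y$. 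If both $x,y$ lay in $\hx$, the tail of $(x_n,y_n)$ would sit inside some $K_N\times K_N$, contradicting the choice; otherwise at least one of them lies in $X$, and applying property b) of Definition \ref{defcontcoarest} to $E$ (when $x\in X$) or to $E^{-1}\in\ce$ (when $y\in X$) forces $y=x$, again a contradiction.

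The only subtle step is establishing properness in the first inclusion: the $C_0$ condition by itself merely controls pairs lying outside a prescribed compact set, so the key observation is that tuning the threshold below $\dd(\overline K,X)$ forces any $(a,b)\in E$ with $a\in K$ either into that compact set or to stay uniformly far from $X$, both of which keep $b$ in a compact subset of $\hx$. Once this is in place, the neighborhood condition is a routine $\epsil/2$ argument, and the reverse inclusion follows from the exhaustion plus property b).
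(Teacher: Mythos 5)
Your proof is correct, but it takes a genuinely different route from the paper's. The paper's proof deliberately leverages Proposition~\ref{caractce}, which already encapsulates the properness argument: since that proposition characterizes membership in $\ce$ purely by the condition $(E\cup E^{-1})(W_x)\subset V_x$, the paper never needs to verify properness again and both inclusions reduce to short $\epsil/2$ estimates performed constructively (in particular, for $\ce\subset\ceo$ the paper builds the required compact set explicitly as $K=\tx\bs\bigcup_{x\in X}W_x$). You instead verify the raw condition c) of Definition~\ref{defcontcoarest} directly, which forces you to establish properness by hand; you do so with a clean metric estimate: tuning the $C_0$ threshold below $\dd(\overline K,X)$ traps the image $E(K)$ in the union of a compact set with $\{z:\dd(z,X)\geq d/2\}$, which is compact in $\hx$. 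For the reverse inclusion you argue by contradiction using sequential compactness of $\tx$ and property b) of the definition (net/sequence convergence), rather than constructing a witness $K$ outright. Both routes are sound. The paper's is shorter because it reuses Proposition~\ref{caractce}; yours is more self-contained (it does not need that proposition) and makes explicit the geometric mechanism behind properness, at the cost of a nonconstructive subsequence argument in the second inclusion. Note also that your appeal to property b) for $E^{-1}$ is legitimate since $\ce$ is closed under inverses, so the case $y\in X$, $x\notin X$ is handled correctly.
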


\begin{proof} The proof of Proposition 6 of \cite{zs} is valid in this context.
But, using Proposition \ref{caractce}, we get the following shorter
proof:

Denote by $\ce$ and $\ceo$ the topological coarse structure attached
to $\tx$ and the $C_0$ coarse structure attached to $\dd$
respectively.

Let $E\in\ceo$. Observe that $E\cup E^{-1}\in\ceo$. Fix $x\in X$ and
a neighborhood $V_x$ of $x$ in $\tx$. Choose an $\epsil>0$ such that
$\BB(x,\epsil)\subset V_x$ and a compact subset $K$ of $\hx$ such
that $\dd(x,y)<\frac{\epsil}{2}$ whenever $(x,y)\in (E\cup
E^{-1})\bs K\times K$. Take
$\delta=\min\left\{\frac{\epsil}{2},\frac{\dd(K,X)}{2}\right\}$ and
$W_x=\BB(x,\delta)$. Pick a point $y\in (E\cup E^{-1})(W_x)$ and
take $z\in W_x$ such that $(y,z)\in E\cup E^{-1}$. Since $z\not\in
K$, it follows that $y,z\in (E\cup E^{-1})\bs K\times K$ and
$\dd(y,z)<\frac{\epsil}{2}$. Hence
$\dd(y,x)\leq\dd(y,z)+\dd(z,x)<\frac{\epsil}{2}+\frac{\epsil}{2}=\epsil$
and, consequently, $(E\cup E^{-1})(W_x)\subset \BB(x,\epsil)\subset
V_x$. Therefore, $E\in\ce$.

Take now $E\in\ce$ and fix $\epsil>0$. For every $x\in X$, consider
a neighborhood $W_x$ of $x$ contained in
$\BB\left(x,\frac{\epsil}{2}\right)$ such that $(E\cup
E^{-1})(W_x)\subset \BB\left(x,\frac{\epsil}{2}\right)$. Let
$K=\tx\bs\bigcup_{x\in X}W_x$ and suppose $(y,z)\in E\bs K\times K$.
Observe that neither $y\in K$ nor $z\in K$. If $z\not\in K$, then
there exists $x\in X$ with $z\in W_x$ and $y\in E(W_x)$. If
$y\not\in K$, then there exists $x\in X$ such that $y\in W_x$ and
$z\in E^{-1}(W_x)$. In both cases, $y,z\in W_x\cup (E\cup
E^{-1})(W_x)\subset \BB\left(x,\frac{\epsil}{2}\right)$ and hence
$\dd(y,z)\leq\dd(y,x)+\dd(x,z)\leq\frac{\epsil}{2}+\frac{\epsil}{2}=\epsil$.
Therefore, $E\in\ceo$.
\end{proof}

From now on, we will use the following notation:

\begin{deff} Let $\xid$ be a metrizable compactification pack. The $C_0$ coarse structure over
$\hx$ attached to $\xid$, denoted by $\ceo\xid$ or by $\ceo$ when no
confusion can arise, is the topological coarse structure attached to
the compactification $\tx$, i. e. the $C_0$ coarse structure
attached to any metric of $\tx$ restricted to $\hx$.\end{deff}

We use this notation because working with a metric in $\tx$
simplifies the calculus and is easier to see geometrically (see
figure \ref{ejgraficocanonico}).

\begin{figurapng}{.85}{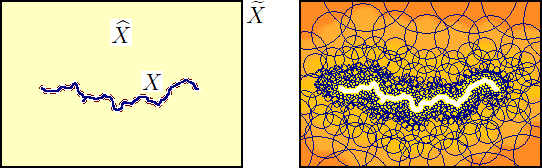}{$\xid$ and uniform cover of $\ceo\xid$} \label{ejgraficocanonico}\end{figurapng}\smallskip

\begin{prop}\label{recunif}
Let $(\hx,\dd)$ be a locally compact metric space and let $\ceo$ be
the $C_0$ coarse structure. Consider a family of subsets $\alpha$ of
$\hx$. Then, $\alpha$ is uniform for $\ceo$ if and only
if:\begin{itemize}
\item $\alpha$ is proper, i. e. for every relatively compact subset $K$ of $\hx$ ,
$\alpha(K)$ is relatively compact in $\hx$.
\end{itemize}
and
\begin{itemize}
\item For every $\epsil>0$ there exists a compact subset $K$ of $\hx$ such that
$\diam U<\epsil$ for every $U\in\alpha$ with $U\cap
K=\varnothing$.\end{itemize}\end{prop}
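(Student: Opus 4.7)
The key observation is that by definition of $\Deelta(\alpha) = \bigcup_{U\in\alpha}U\times U$, the family $\alpha$ is uniform iff $\Deelta(\alpha)\in\ceo$, and moreover for $K\subset\hx$ we have $\Deelta(\alpha)(K)=\alpha(K)$ by identity (\ref{usefulidentities1}). So the whole statement translates into a direct characterization, for sets of the special form $\Deelta(\alpha)$, of what it means to lie in the $C_0$-structure. I would therefore just unfold the $C_0$ definition in both directions.

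\textbf{Sufficiency ($\Leftarrow$).} Given $\epsil>0$, take the compact set $K$ provided by the second property and set $K'=\overline{\alpha(K)}\cup K$, which is compact by the first property (properness). To check the $C_0$ condition for $\Deelta(\alpha)$, pick $(x,y)\in\Deelta(\alpha)\bs K'\times K'$, say $x\notin K'$, and choose $U\in\alpha$ with $x,y\in U$. If $U$ met $K$ we would have $U\subset\alpha(K)\subset K'$, forcing $x\in K'$; so $U\cap K=\varnothing$, whence $\dd(x,y)\leq\diam U<\epsil$. Hence $\Deelta(\alpha)\in\ceo$ and $\alpha$ is uniform.

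\textbf{Necessity ($\Rightarrow$).} Assume $\Deelta(\alpha)\in\ceo$. The second property is immediate: given $\epsil>0$, apply the $C_0$ condition to $\Deelta(\alpha)$ at scale $\epsil/2$ to produce a compact $K$; then for any $U\in\alpha$ with $U\cap K=\varnothing$ and any $x,y\in U$ we have $(x,y)\in\Deelta(\alpha)\bs K\times K$, so $\dd(x,y)<\epsil/2$ and $\diam U\leq\epsil/2<\epsil$. For the first (properness) property, let $K\subset\hx$ be relatively compact; using that $\hx$ is locally compact, pick a compact neighborhood $K^*$ of $\overline{K}$ in $\hx$ and a $\delta>0$ with $\BB(\overline{K},\delta)\subset K^*$. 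Apply the $C_0$ condition to $\Deelta(\alpha)$ at scale $\delta$ to obtain a compact $K_\delta$. Now for any $U\in\alpha$ meeting $K$, fix $x_0\in U\cap K$; any $z\in U$ lies either in $K_\delta$, or else satisfies $(z,x_0)\in\Deelta(\alpha)\bs K_\delta\times K_\delta$, so $\dd(z,x_0)<\delta$ and $z\in\BB(K,\delta)\subset K^*$. Thus every such $U$ lies in the compact set $K_\delta\cup K^*$, and hence so does $\alpha(K)$.

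\textbf{Main obstacle.} The sufficiency and the diameter half of necessity are essentially unwinding definitions. The genuine content is the properness of $\alpha$ in the forward direction: the naive argument ``pick $y\in U$ far from some compact set and apply the $C_0$ condition'' only confines $U$ to a metric $1$-neighborhood of $K$, which need not be relatively compact in a merely locally compact metric space. The trick is to shrink the scale $\epsil$ \emph{before} applying the $C_0$ condition, using the local compactness of $\hx$ to pick a compact neighborhood of $\overline{K}$ that absorbs a whole $\delta$-neighborhood of $K$; this is what forces all the relevant $U$'s into a fixed compact set and delivers properness.
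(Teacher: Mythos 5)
Your proof is correct and follows essentially the same route as the paper: translate uniformity of $\alpha$ into $\Deelta(\alpha)\in\ceo$, use identity (\ref{usefulidentities1}) to rewrite $\Deelta(\alpha)(K)=\alpha(K)$, and unwind the $C_0$ condition in both directions. The sufficiency argument (taking $K'=\overline{\alpha(K)}$, or your slightly more defensive $\overline{\alpha(K)}\cup K$) and the small-diameter half of necessity are essentially the paper's argument. Where you genuinely add content is the properness half of necessity: the paper disposes of it in one clause, asserting that $\Deelta(\alpha)$ is ``controlled and hence proper.'' That implication is immediate for a \emph{proper} metric space, but as you rightly observe the naive bound only confines $\alpha(K)$ to $K_1\cup\BB(K,1)$, and $\BB(K,1)$ need not be relatively compact when $\hx$ is merely locally compact. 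Your device --- choose a compact neighbourhood $K^*$ of $\overline K$, find $\delta>0$ with $\BB(\overline K,\delta)\subset K^*$, and only then invoke the $C_0$ condition at scale $\delta$ so that every $U\in\alpha$ meeting $K$ is trapped in $K_\delta\cup K^*$ --- is the correct way to derive properness directly from the $C_0$ definition under the stated hypothesis, and it makes explicit a step the paper leaves unjustified. (The existence of such a $\delta$ does itself rest on a small compactness argument: when $K^*\neq\hx$, take the minimum over $\overline K$ of the continuous positive function $x\mapsto\dd(x,\hx\bs\operatorname{int}K^*)$; this is routine and you may wish to spell it out.)
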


\begin{proof}
Suppose that $\alpha$ is uniform. Since $\Deelta(\alpha)$ is
controlled and hence proper, (\ref{usefulidentities1}) shows that
$\alpha(K)=\Deelta(\alpha)(K)$ is relatively compact for every
relatively compact  $K$ of $\hx$.

Let $\epsil>0$ and consider a compact subset $K$ of $\hx$ such that
$\dd(x,y)<\epsil$ for every $(x,y)\in\Deelta(\alpha)\bs K\times K$.
Take $U\in \alpha$ with $U\cap K=\varnothing$ and $x,y\in U$. Since
$x,y\not\in K$, $(x,y)\in \Deelta(\alpha)\bs K\times K$ and hence
$\dd(x,y)<\epsil$. Thus, $\diam U\leq \epsil$.

For the reciprocal, fix $\epsil>0$ and consider a compact subset $K$
of $\hx$ with $\diam U<\epsil$ for every $U\in\alpha$ such that
 $U\cap K=\varnothing$.
Since $\alpha$ is proper, $\alpha(K)$ is relatively compact.
Consider $K'=\overline{\alpha(K)}$ and pick
$(x,y)\in\Deelta(\alpha)\bs K'\times K'$. Observe that neither $x\in
K$ nor $y\in K$. Suppose, without loss of generality, that $x\not\in
K'$. Let $U\in\alpha$ be such that $x,y\in U$. Since $x\in U$
and $x\not\in \alpha(K)=\bigcup_{\substack{U\cap K\neq\varnothing\\
U\in \alpha}} U$, we have that $U\cap K=\varnothing$. Then,
$\dd(x,y)\leq\diam U<\epsil$. Hence,
$\Deelta(\alpha)\in\ceo$.\end{proof}

\begin{lemma}\label{basicodefrec} Let $\xid$ be a metrizable compactification
pack and consider $(\hx,\ceo)$.

Fix a sequence of open subsets $\{W_n\}_{n=0}^\infty$ of $\tx$ with
$W_0=\tx$, $W_0\supset \overline W_1\supset W_1\supset \overline
W_2\supset W_2\supset\dots$ and $X=\bigcap_{n=0}^\infty W_n$.

Let $\{\beta_n\}_{n=0}^\infty$ be a sequence of families of open
subsets of $\tx$ with $\beta_0=\{\tx\}$, $X\subset
\bigcup_{U\in\beta_n}U$ for every $n$ and
$\lim_{m,n\rightarrow\infty}\mesh\{U\cap W_m:U\in\beta_n\}=0$.

Consider $\alpha(\{\beta_n\},\{W_n\})=\{U\cap (W_n\bs \overline
W_{n+2}):U\in\beta_n, n\geq 0\}$. Then:
\begin{enumerate}[a) ]
\item $\alpha(\{\beta_n\},\{W_n\})$ is an open and uniform family of subsets of
$\hx$.

\item If each $\beta_n$ if finite, $\alpha(\{\beta_n\},\{W_n\})$ is
locally finite.

\item If $\gamma$ is a uniform family of subsets of $\hx$,
then there exists a subsequence $\{n_k\}_{k=0}^\infty$ with $n_0=0$
such that $\gamma\prec
\alpha(\{\beta_k\},\{W_{n_k}\})$.\end{enumerate}\end{lemma}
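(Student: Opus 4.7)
The plan is to use Proposition \ref{recunif} to handle parts a) and b) essentially by inspection, and to carry out an inductive construction of $\{n_k\}$ for part c) that couples Lebesgue-type constants for the $\beta_k$'s to the uniformity data of $\gamma$.

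For part a), each element of $\alpha(\{\beta_n\},\{W_n\})$ is open, and I verify the two conditions of Proposition \ref{recunif}. For properness, given relatively compact $K\subset\hx$ I choose $N$ with $K\cap\overline W_N=\emptyset$; any shell with $m\geq N$ misses $K$ since $W_m\subset W_N\subset\overline W_N$, so $\alpha(\{\beta_n\},\{W_n\})(K)$ is a finite union of shells, each with closure contained in the compact-in-$\hx$ set $\tx\bs W_{m+2}$. For the mesh condition, given $\epsil>0$ the hypothesis yields $M$ with $\diam(V\cap W_m)<\epsil$ whenever $V\in\beta_n$ and $m,n\geq M$; taking $K=\tx\bs W_{M+2}$, any nonempty $U=V\cap(W_m\bs\overline W_{m+2})$ disjoint from $K$ must have $m\geq M+1$ (otherwise $U\subset(W_m\bs\overline W_{m+2})\cap W_{M+2}=\emptyset$), hence $\diam U\leq\diam(V\cap W_m)<\epsil$. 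For part b), given $y\in\hx$ I choose $n_y$ with $y\notin\overline W_{n_y+1}$; then $V_y=\tx\bs\overline W_{n_y+1}$ is an open neighborhood of $y$ contained in $\hx$, and an $\alpha$-element $V\cap(W_m\bs\overline W_{m+2})$ meets $V_y$ only if $m\leq n_y$ (otherwise $W_m\subset\overline W_{n_y+1}$), leaving finitely many indices $m$, each contributing finitely many elements from the finite $\beta_m$.

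For part c), I first observe that uniformity of $\gamma$ forces $\overline G$ compact in $\hx$ for every $G\in\gamma$: properness applied to the compact singleton $\{y\}$ for any $y\in G$ shows $\gamma(\{y\})\supset G$ is relatively compact. Consequently $k(G):=\max\{k\geq 0:G\cap\overline W_{n_k}\neq\varnothing\}$ is finite for any subsequence. I build $\{n_k\}$ inductively: set $n_0=0$ and, given $n_0<\dots<n_{k-1}$ with $k\geq 1$, let $\mu_{k-1}>0$ be the minimum of a Lebesgue-type constant for $\beta_{k-1}$ (by compactness of $X$ and openness of $\bigcup\beta_{k-1}\supset X$, there exists $\mu>0$ with $\BB(x,\mu)\subset V$ for some $V\in\beta_{k-1}$ and every $x\in X$) and of $\ddabc(X,\tx\bs W_{n_{k-1}})>0$, so that $\BB(X,\mu_{k-1})\subset W_{n_{k-1}}$. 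By uniformity, pick compact $K_{k-1}\subset\hx$ with $\diam G<\mu_{k-1}/2$ when $G\cap K_{k-1}=\varnothing$; by properness, $\overline{\gamma(K_{k-1})}$ is compact in $\hx$ and hence disjoint from $\overline W_{N_{k-1}}$ for some $N_{k-1}$, giving the implication $G\cap K_{k-1}\neq\varnothing\Rightarrow G\cap\overline W_{N_{k-1}}=\varnothing$. Choose $n_k\geq\max(n_{k-1}+1,N_{k-1})$ with $\overline W_{n_k}\subset\BB(X,\mu_{k-1}/2)$, possible because $\overline W_n$ is eventually contained in every open neighborhood of $X$.

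To verify $\gamma\prec\alpha(\{\beta_k\},\{W_{n_k}\})$, fix nonempty $G\in\gamma$. If $k(G)=0$ then $G\subset\tx\bs\overline W_{n_2}=W_{n_0}\bs\overline W_{n_2}$, which is the shell-$0$ element since $\beta_0=\{\tx\}$. If $k=k(G)\geq 1$, maximality gives $G\cap\overline W_{n_{k+1}}=\varnothing$, and $G\cap\overline W_{n_k}\neq\varnothing$ combined with $n_k\geq N_{k-1}$ forces $G\cap K_{k-1}=\varnothing$ by contraposition, so $\diam G<\mu_{k-1}/2$; picking $y\in G\cap\overline W_{n_k}\subset\BB(X,\mu_{k-1}/2)$ and $x\in X$ with $\ddabc(y,x)<\mu_{k-1}/2$, every $g\in G$ satisfies $\ddabc(g,x)<\mu_{k-1}$, so $G\subset\BB(x,\mu_{k-1})\subset V\in\beta_{k-1}$ while simultaneously $G\subset\BB(X,\mu_{k-1})\subset W_{n_{k-1}}$. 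Hence $G\subset V\cap(W_{n_{k-1}}\bs\overline W_{n_{k+1}})$, a shell-$(k-1)$ element. The main obstacle is precisely this scale coordination: the Lebesgue constant of $\beta_{k-1}$ must be capped by the previously-fixed $\ddabc(X,\tx\bs W_{n_{k-1}})$ before invoking uniformity and properness of $\gamma$, so that the resulting $n_k$ simultaneously places every deep-enough $G\in\gamma$ into the shell $W_{n_{k-1}}\bs\overline W_{n_{k+1}}$ and into a single element of $\beta_{k-1}$.
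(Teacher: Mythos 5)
Your proof is correct and follows essentially the same strategy as the paper's: characterize uniformity via properness and the mesh condition (Proposition \ref{recunif}), and construct $\{n_k\}$ inductively by coordinating a Lebesgue-type constant for the relevant $\beta$-family with the smallness and properness of $\gamma$ deep inside the $W_n$'s. The paper packages the smallness as $L_n=\sup\{\diam V:V\in\gamma,\,V\subset W_n\}\to 0$ and takes the Lebesgue number of the global cover $\beta_k\cup\{\tx\bs\overline W_m\}$ of $\tx$, whereas you localize the Lebesgue constant to balls about points of $X$ and force $G\subset W_{n_{k-1}}$ through the tube $\BB(X,\mu_{k-1})$ (with the paper using $\beta_k$ at step $k$ and applying the result at $k-1$, while you use $\beta_{k-1}$ directly); these are equivalent reorganizations of the same argument.
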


\begin{figurapng}{.75}{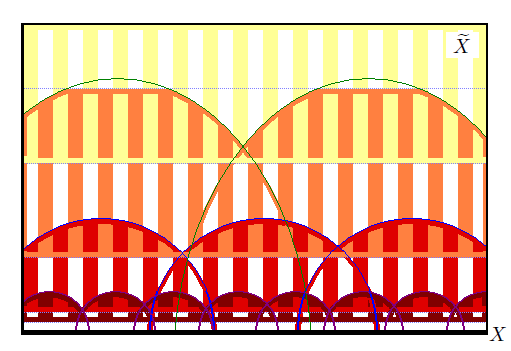}{Cover
$\alpha\big(\left\{\beta_n\right\},\left\{W_n\right\}\big)$}\label{figuraescamas}\end{figurapng}
\begin{proof}

Observe that if  $K$ is a compact subset of $\hx$, then $\{K\cap
\overline W_n\}_{n=0}^\infty$ is a sequence of nested compact
subsets such that $\bigcap_{n=0}^\infty K\cap \overline
W_n=\varnothing$ and, hence there exists $N\in\Nset$ with
$K\cap\overline W_N=\varnothing$. Equivalently, if $U$ is an open
subset of $\tx$ containing $X$, then there exist $N\in\Nset$ such
that $W_N\subset U$.

For short, let us denote $\alpha(\{\beta_n\},\{W_n\})$ by $\alpha$. Obviously,
$\alpha$ is open.

Let us see that $\alpha$ is uniform, i. e. $\Deelta(\alpha)\in\ceo$.
Fix $\epsil>0$ and let $N\in\Nset$ be such that for every $m,n\geq N$,
$\mesh\{U\cap W_m:U\in\beta_n\}<\epsil$. Put $K=\tx\bs W_{N+2}$,
pick $(x,y)\in\Deelta(\alpha)\bs K\times K$ and take $V\in\alpha$
with $x,y\in V$. Let $n>0$ and $U\in\beta_n$ be such that
 $V=U\cap W_n\bs \overline W_{n+2}$. Since $(x,y)\not\in K\times
K$, neither $x\in K$ nor $y\in K$. Suppose, without loss of
generality, that $x\not\in K$, or equivalently, that $x\in W_{N+2}$.
Since $x\in V\subset\tx\bs W_{n+2}$, we have that $W_{N+2}\bs
W_{n+2}\supset\{x\}\neq\varnothing$ and, consequently, $n> N$. Thus,
$\dd(x,y)\leq\diam U\cap W_n<\epsil$ and $\Deelta(\alpha)\in\ceo$.

Fix $x\in \hx$ and take $n\geq 0$ such that $x\in W_n\bs W_{n+1}$.
Consider the neighborhood of $x$ $W_n\bs \overline W_{n+2}$. It is
easy to check that $\{V\in\alpha: V\cap W_n\bs \overline
W_{n+2}\neq\varnothing\}\subset \{U\cap (W_k\bs \overline
W_{k+2}):U\in\beta_k, k=n-1,n,n+1\}$. Then, if each $\beta_n$ is
finite, $\alpha$ is locally finite.

Let $\gamma$ be a uniform family of subsets of $(\hx,\ceo)$. Let us
construct a subsequence $\{n_k\}_{k=0}^\infty$ with $n_0=0$ such
that $\gamma\prec\alpha\big(\{\beta_k\},\{W_{n_k}\}\big)$. We will
use the characterization of uniform families of subsets given in
Lemma \ref{recunif}.

Let $L_n=\sup\{\diam V:V\in \gamma, V\subset W_n\}$ for every
$n\in\Nset\cup\{0\}$. Let us see that $L_n\rightarrow 0$. Fix
$\epsil>0$ and suppose that $K$ is a compact subset of $\hx$ such
that $\diam V<\epsil$ for every $V\in\alpha$ with $V\cap
K=\varnothing$. Choose $N\in\Nset$ with $K\cap W_N=\varnothing$. Fix
$n\geq N$ and $V\in\gamma$ with $V\subset W_n$. Since $V\cap
K=\varnothing$, we have that $\diam V<\epsil$. Consequently,
$L_n\leq\epsil$ for every $n\geq N$.

Put $n_0=0$. Assuming that $n_0,\cdots,n_{k-1}$ are defined, let us
define $n_k$:

Choose $m>0$ such that $\overline W_m\subset\bigcup_{U\in\beta_k}U$.
Consider the cover of $\tx$
 $\beta_k\cup \{\tx\bs\overline
W_m\}$ and let $L$ be its Lebesgue number. Take $m'$ such that
$L_n<L$ for every $n\geq m'$.

Since $\gamma$ is a uniform cover of $\ceo$, it is proper. Since
$\tx\bs W_{n_{k-1}}$ is relatively compact in $\hx$, it follows that
$\gamma(\tx\bs W_{n_{k-1}})$ is relatively compact in $\hx$. Choose
$m''$ with $\overline W_{m''}\cap \gamma(\tx\bs
W_{n_{k-1}})=\varnothing$.

Let $n_k=\max\{n_{k-1}+1,m+1,m',m''\}$. By definition of $m''$:
\begin{equation}\label{enfadf42qee1}
\gamma(\tx\bs W_{n_{k-1}})\subset \tx\bs\overline W_{n_k}
\end{equation}

Let us see that:
\begin{equation}\label{enfadf42qee2}
\{V\in\gamma:V\subset W_{n_k}\}\prec\beta_k
\end{equation}

Fix $V\in\gamma$ with $V\subset W_{n_k}$. Since $n_k\geq m'$, $\diam
V\leq L_{n_k}< L$. Then, there exists
$U\in\beta_k\cup\{\tx\bs\overline W_m\}$ such that $V\subset U$.
Necessarily $U\in \beta_k$, because $V\cap (X\bs\overline
W_m)=\varnothing$.

Let us see now that
$\gamma\prec\alpha\big(\{\beta_k\},\{W_{n_k}\}\big)$. Fix
$V\in\gamma$. Since $V$ is bounded and hence relatively compact,
there exists $N$ with $W_n\cap V=\varnothing$. Then, $V\cap (\tx\bs
W_n)\neq\varnothing$ for every $n\geq N$. Let $k=\min\{k':V\cap
(\tx\bs W_{n_{k'}})\neq\varnothing\}$. By (\ref{enfadf42qee1}),
\begin{equation}\label{enfadf42qee3}
V\subset\gamma(\tx\bs W_{n_k})\subset  \tx\bs \overline W_{n_{k+1}}
\end{equation}

If $k=0$, then (\ref{enfadf42qee3}) implies that $V\subset \tx\bs
\overline W_{n_1}\subset \tx\cap (W_{n_0}\bs \overline
W_{n_2})\in\alpha\big(\{\beta_k\},\{W_{n_k}\}\big)$.

Suppose $k\geq 1$. By definition of $k$, $V\cap (\tx\bs
W_{n_{k-1}})=\varnothing$, that is, $V\subset W_{n_{k-1}}$. By
(\ref{enfadf42qee3}), $V\subset W_{n_{k-1}}\bs \overline
W_{n_{k+1}}$.

Since $V\subset W_{n_{k-1}}$, by applying (\ref{enfadf42qee2}) to
$k-1$, we get that there exists $U\in \beta_{k-1}$ such that
$V\subset U$. Therefore, $V\subset U\cap (W_{n_{k-1}}\bs\overline
W_{n_{k+1}})\in\alpha\big(\{\beta_k\},\{W_{n_k}\}\big)$.
\end{proof}

\begin{prop}\label{muchoscanonicos} Let $\xid$ be a metrizable compactification pack
and consider $(\hx,\ceo)$. For every uniform family $\gamma$ of
subsets of $\hx$, there exists a canonical cover $\alpha$ with
$\gamma\prec\alpha$.\end{prop}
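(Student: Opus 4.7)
The plan is to construct sequences $\{W_n\}$ and $\{\beta_n\}$ satisfying the hypotheses of Lemma~\ref{basicodefrec} in such a way that the family $\alpha(\{\beta_k\},\{W_{n_k}\})$ produced by part c) of that lemma is automatically a cover of $\hx$. Combined with parts a), b) of the lemma and Corollary~\ref{caractcanon}, any such family will be a canonical cover, furnishing the desired refinement of $\gamma$.

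Concretely, fix a metric $\dd$ on $\tx$ and set $W_0=\tx$ and, for $n\geq 1$, $W_n=\{x\in\tx:\dd(x,X)<r_n\}$ for some sequence $\{r_n\}$ strictly decreasing to $0$. This gives the nested sequence with $\bigcap_n W_n=X$ required by the lemma. Put $\beta_0=\{\tx\}$ and, using compactness of $\tx$, choose for each $n\geq 1$ a finite open cover $\beta_n$ of $\tx$ with $\mesh\beta_n<1/n$ (e.g.\ finitely many open balls of radius $1/(2n)$). Then $X\subset\bigcup_{U\in\beta_n}U$ is trivial, and for any $\epsil>0$ we have $\mesh\{U\cap W_m:U\in\beta_n\}\leq\mesh\beta_n<\epsil$ whenever $n>1/\epsil$, so the mesh hypothesis of Lemma~\ref{basicodefrec} holds (note that the problematic index $n=0$, where the mesh equals $\diam W_m$, is irrelevant because the limit requires \emph{both} $m$ and $n$ to be large). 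Applying part c) to the uniform family $\gamma$ yields a subsequence $\{n_k\}$ with $n_0=0$ and $\gamma\prec\alpha:=\alpha(\{\beta_k\},\{W_{n_k}\})$; by parts a), b), $\alpha$ is open, locally finite, and uniform for $\ceo$.

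It remains to check that $\alpha$ actually covers $\hx$, which is the only delicate point, since Lemma~\ref{basicodefrec} on its own only produces an open uniform family. Given $x\in\hx$, since $\bigcap_k W_{n_k}=X$ and $W_{n_0}=\tx$, there is a least $k\geq 1$ with $x\notin W_{n_k}$; then $x\in W_{n_{k-1}}$ and, because $\overline{W}_{n_{k+1}}\subset W_{n_{k+1}-1}\subset W_{n_k}$, also $x\notin\overline{W}_{n_{k+1}}$, so $x\in W_{n_{k-1}}\bs\overline{W}_{n_{(k-1)+2}}$. Since $\beta_{k-1}$ covers $\tx$ (either as $\{\tx\}$ when $k=1$ or as the finite cover when $k\geq 2$), some $U\in\beta_{k-1}$ contains $x$, and therefore $x\in U\cap(W_{n_{k-1}}\bs\overline{W}_{n_{k+1}})\in\alpha$. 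This is precisely why we insist that each $\beta_n$ cover all of $\tx$ rather than merely a neighborhood of $X$; the rest of the argument is a bookkeeping application of the results already established.
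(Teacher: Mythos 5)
Your proof is correct and rests on the same key lemma the paper uses (Lemma~\ref{basicodefrec}); the overall strategy --- choose $\{W_n\}$ as shrinking neighborhoods of $X$, choose finite $\{\beta_n\}$, apply part c) of the lemma, and check the result is a cover --- is the same. The one genuine difference lies in how the cover property of $\alpha$ is obtained. The paper takes $\beta_n$ to be a finite subfamily of $\{\BB(x,1/n):x\in X\}$ covering only $X$, and instead applies part c) to the \emph{augmented} family $\gamma\cup\{\{x\}:x\in\hx\}$ (which is still uniform since $\Deelta$ of it is $\Deelta(\gamma)\cup\Delta$); because this augmented family is a cover and refines $\alpha$, $\alpha$ is automatically a cover. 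You instead make each $\beta_n$ a finite cover of \emph{all} of $\tx$ and verify directly, by locating the first $k$ with $x\notin W_{n_k}$, that $x\in U\cap(W_{n_{k-1}}\bs\overline{W}_{n_{k+1}})$ for some $U\in\beta_{k-1}$. Both arguments are valid; yours is somewhat more self-contained (it does not require noticing that adjoining singletons preserves uniformity), while the paper's is slightly shorter once that observation is made. One micro-quibble: open balls of radius $1/(2n)$ give $\mesh\beta_n\leq 1/n$ rather than strictly less than, but that is harmless since all you need is $\mesh\beta_n\to 0$.
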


\begin{proof}Set $W_0=\tx$ and $\beta_0=\{\tx\}$. Let $k=\sup_{x\in
\tx}\dd(x,X)$. For every $n\in\Nset$, put
$W_n=\BB\left(X,\frac{k}{2n}\right)$. By the compactness of $X$, for
every $n\in\Nset$ we may choose be a finite subfamily $\beta_n$  of
$\{\BB(x,\frac{1}{n}):x\in X\}$ such that $X\subset
\bigcup_{U\in\beta_n}U$.

By Lemma \ref{basicodefrec}, there exists a subsequence $\{n_j\}$
with $n_0=0$ such that $\gamma\cup\{\{x\}:x\in
\hx\}\prec\alpha\big(\{\beta_j\},\{W_{n_j}\}\big)$. Moreover,
$\alpha$ is open, locally finite and uniform for $\ceo$.

Since $\gamma\cup\{\{x\}:x\in X\}$ is a cover of $\hx$, $\alpha$ is
a cover too. By Corollary \ref{caractcanon},
$\alpha\big(\{\beta_j\},\{W_{n_j}\}\big)$ is a canonical cover.
\end{proof}

Now, given a metrizable compactification pack $\xid$, we relate the
dimension of $X$ with the order of the canonical covers in $\hx$.

\begin{deff}
Let $Z$ be a set, suppose $x\in Z$, $V\subset Z$, $E\subset Z\times
Z$ and that $\alpha$ is a family of subsets of $Z$. Then:
 \begin{itemize}
 \item $\mult_x\alpha=\#\{U\in\alpha:x\in V\}$
 \item $\mult_{V}\alpha=\#\{U\in\alpha:V\cap U\neq\varnothing\}$
 \item $\mult_{E}\alpha=\sup\{\mult_{E_x}\alpha:x\in Z\}$
\end{itemize}
\end{deff}

\begin{deff} Let $(Z,\ce)$ be a coarse space. We say that $\asdim
(Z,\ce)\leq n$ if $(Z,\ce)$ satisfies any of the following
equivalent properties:
\begin{enumerate}[a) ]
\item For every uniform cover $\beta$ there exists a uniform cover
$\alpha\succ\beta$ with $\mult\alpha\leq n+1$.
\item For every controlled set $E$
there exists a uniform cover $\alpha$ such that $\mult_E\alpha\leq
n+1$.
\end{enumerate}
\end{deff}

The equivalence of the properties above is given in \cite{grv} or
\cite{grv2} (Chapter 3.2 or \cite{grv}, pags 35-39).

From \cite{grv} or \cite{grv2} we take the following theorem
(Theorem 5.5 or \cite{grv}, pag. 59), adapted to our language:

\begin{teo}[Grave's theorem] If $\xid$ be a metrizable compactification pack, then
$$\asdim (\hx,\ceo)=\dim X+1$$
\end{teo}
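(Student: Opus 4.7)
My plan is to prove the equality $\asdim(\hx,\ceo)=\dim X+1$ by establishing the two inequalities separately, using the canonical-cover machinery of Section~3 (mainly Lemma \ref{basicodefrec} and Proposition \ref{muchoscanonicos}) together with the topological-dimension results announced for Section~4.

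For the upper bound $\asdim(\hx,\ceo)\leq\dim X+1$, set $n=\dim X$ and let $\gamma$ be an arbitrary uniform cover of $(\hx,\ceo)$. The goal is to exhibit a uniform coarsening $\alpha\succ\gamma$ with $\mult\alpha\leq n+2$. Using $\dim X\leq n$, I would choose for each $k$ a finite open cover $\beta_k$ of $X$ in $\tx$ of multiplicity $\leq n+1$ and with mesh shrinking to zero, and then apply Lemma \ref{basicodefrec} with a nested sequence $\{W_{n_k}\}$ selected so that $\gamma\prec\alpha(\{\beta_k\},\{W_{n_k}\})$. The resulting cover $\alpha$ is canonical, and hence uniform. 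The naive multiplicity bound is $3(n+1)$, since at any point of $\hx$ at most three consecutive shells $W_{n_k}\bs\overline W_{n_{k+2}}$ contribute. To sharpen the bound to $n+2$, I would interlace the $\beta_k$, simultaneously refining them via the topological-dimension argument of \cite{moreno}, so that consecutive shells employ compatible partitions of $X$; this ensures that at any point of $\hx$ only one shell contributes its full multiplicity $n+1$, while each adjacent shell adds at most one extra set.

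For the lower bound $\asdim(\hx,\ceo)\geq\dim X+1$, I would argue by contrapositive: assuming $\asdim(\hx,\ceo)\leq m$, I show $\dim X\leq m-1$. Let $\mathcal{U}$ be an arbitrary finite open cover of $X$ in $\tx$; the task is to produce an open refinement of $\mathcal{U}$ restricted to $X$ of multiplicity $\leq m$. First, use Lemma \ref{basicodefrec} with each $\beta_k$ chosen to refine $\mathcal{U}$ on a neighborhood of $X$ to obtain a canonical cover $\beta$ of $\hx$ whose elements track $\mathcal{U}$ near $X$. By the asymptotic-dimension assumption, there is a uniform cover $\alpha\succ\beta$ with $\mult\alpha\leq m+1$. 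For each $U\in\alpha$, set $T(U)=\overline{U}^{\,\tx}\cap X$; swelling each nonempty $T(U)$ to a small open neighborhood in $X$ produces a family $\widetilde\alpha$ of open subsets of $X$. Because $\alpha\succ\beta$ and $\beta$ was built from $\mathcal{U}$-refinements, $\widetilde\alpha$ refines $\mathcal{U}$. The decisive point is the drop of one in the multiplicity: since canonical covers accumulate at $X$ through pairs of adjacent shells, one of the two overlapping shell-layers collapses in the limit, yielding $\mult\widetilde\alpha\leq m$.

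The main obstacle in both directions is making precise the informal fact that canonical covers ``see $X$ twice'' through two adjacent accumulation shells --- it is exactly this duplication that produces the ``$+1$'' in Grave's formula. Both the interlacing argument in the upper bound and the shell-collapse in the lower bound require the topological-dimension input from \cite{moreno} mentioned in the introduction. I expect this dimension-theoretic lemma to be the most delicate part of the write-up, with the coarse-geometric scaffolding of Section~3 (and especially Lemma \ref{basicodefrec} and Proposition \ref{muchoscanonicos}) then assembling the final bound once the lemma is in hand.
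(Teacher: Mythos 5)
Your upper-bound strategy matches the paper's. Proposition \ref{demgrave2} proceeds exactly as you describe: a sequence of finite open covers $\alpha_i$ of $X$ with the common-multiplicity property $\mult(\alpha_i,\alpha_{i+1})\le n+2$ (Theorem \ref{teomoreno}, quoted from \cite{moreno}), pushed out to open subsets of $\tx$ via a Kuratowski--Ext type map $v$ (Proposition \ref{propiedadesv}), then assembled into a canonical cover $\alpha(\{\beta_k\},\{W_{n_k}\})\succ\gamma$ via Lemma \ref{basicodefrec} and Proposition \ref{basicodefrec2}. One small correction: in the shell cover $\alpha(\{\beta_k\},\{W_{n_k}\})$ a fixed point of $\hx$ belongs to sets from at most \emph{two} consecutive shells $W_{n_k}\bs\overline W_{n_{k+2}}$, not three; three shells is the relevant count for local finiteness, not for pointwise multiplicity, so the naive bound is $2(n+1)$. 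This does not affect the substance --- the common-multiplicity input $\mult(\alpha_i,\alpha_{i+1})\le n+2$ from \cite{moreno} is still what replaces $2(n+1)$ by $n+2$, and you correctly locate the difficulty there.

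Your lower-bound strategy is a genuinely different route, and it has a gap I do not see how to close. The decisive claim is that passing from a uniform cover $\alpha$ with $\mult\alpha\le m+1$ to the swelled traces $\widetilde T(U)$, where $T(U)=\overline U^{\tx}\cap X$, drops the multiplicity by one. In fact the trace multiplicity of a uniform cover is not controlled by $\mult\alpha$ at all: in the cylinder pack $(X\times\{0\},X\times(0,1],X\times[0,1])$, the pairwise-disjoint slabs $U_n=X\times\bigl(\tfrac{1}{n+1},\tfrac{1}{n}\bigr)$ form a uniform open cover of multiplicity $1$, yet every $\overline{U_n}$ contains all of $X\times\{0\}$, so the trace multiplicity is infinite. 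Uniformity controls diameters of the $U\in\alpha$ near $X$ but not how many of their closures pass through a given boundary point, and coarsening a canonical $\beta$ built from $\mathcal U$-refinements does not repair this. The paper instead obtains the $+1$ from the dimension product theorem $\dim(X\times[0,1])=\dim X+1$: it first replaces $\xid$ by the cylinder pack via the coarse equivalence of Proposition \ref{fgequivcoarse}/Corollary \ref{variasvecesdescubierto}, and then Lemma \ref{teodegrave}, which rests on the reflection-and-tiling construction of Lemma \ref{mejoraproducto01}, turns a small-mesh non-bridging open cover of a thin slab $X\times[\delta_2,\delta_1]$ of multiplicity $\le n+1$ into a cover of $X\times[0,1]$ of arbitrarily small mesh and multiplicity $\le n+1$, forcing $\dim X+1\le n$. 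That is the ingredient your proposal is missing; without some analogue of Lemma \ref{mejoraproducto01} the ``shell collapse'' remains a heuristic, not an argument.
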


\begin{lemma}\label{cambiomult}If $\alpha$ is a family of subsets
of a set $Z$ and $E,F\subset Z$, then:
$$\mult_F E(\alpha)\leq\mult_{E^{-1}\circ F}\alpha$$
$$\mult E(\alpha)\leq\mult_{E^{-1}}\alpha$$
\end{lemma}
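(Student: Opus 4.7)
The plan is to reduce both inequalities to the set-theoretic identities (\ref{usefulidentities2}) and (\ref{usefulidentities3}) recorded in the preliminaries. The key observation to start with is that for any family $\beta$ of subsets of $Z$ the multiplicity can be computed pointwise: $\mult\beta=\sup_{x\in Z}\mult_x\beta$, since a subcollection of $\beta$ with nonempty intersection is exactly a subcollection containing some common point $x$. Combined with the definition $\mult_E\alpha=\sup_x\mult_{E_x}\alpha$, this means both inequalities can be verified one slice $F_x$ at a time.

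For the first inequality I fix $x\in Z$ and analyze $\mult_{F_x}E(\alpha)$, which counts the elements $V\in E(\alpha)$ meeting $F_x$. Every such $V$ has the form $V=E(U)$ for some $U\in\alpha$ (possibly not uniquely), so this cardinality is bounded above by $\#\{U\in\alpha:F_x\cap E(U)\neq\varnothing\}$. By identity (\ref{usefulidentities3}) the condition $F_x\cap E(U)\neq\varnothing$ is equivalent to $U\cap E^{-1}(F_x)\neq\varnothing$, and by identity (\ref{usefulidentities2}) applied to $E^{-1}\circ F$ we have $E^{-1}(F_x)=(E^{-1}\circ F)_x$. Hence this count equals $\mult_{(E^{-1}\circ F)_x}\alpha$, and taking the supremum over $x$ yields $\mult_FE(\alpha)\leq\mult_{E^{-1}\circ F}\alpha$.

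For the second inequality I specialize the first to $F=\Delta$: from $\Delta_x=\{x\}$ one gets $\mult_\Delta\beta=\sup_x\mult_{\{x\}}\beta=\sup_x\mult_x\beta=\mult\beta$ for any family $\beta$, while $E^{-1}\circ\Delta=E^{-1}$. Substituting in the first inequality gives $\mult E(\alpha)\leq\mult_{E^{-1}}\alpha$. There is no substantive obstacle in this lemma; it is pure bookkeeping around identities (\ref{usefulidentities2}) and (\ref{usefulidentities3}). The only mild subtlety worth flagging is that $E(\alpha)$, viewed as an indexed family, may contain repetitions (different $U$'s producing the same $E(U)$), but this can only shrink the left-hand cardinality and so is harmless.
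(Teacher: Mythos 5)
Your proof is correct and follows essentially the same route as the paper: you use identities (\ref{usefulidentities2}) and (\ref{usefulidentities3}) to rewrite the meeting condition, note that the map $U\mapsto E(U)$ from $\alpha$ onto $E(\alpha)$ can only shrink the count, take the supremum over $x$, and then specialize to $F=\Delta$ for the second inequality. No gaps, and nothing substantively different from the paper's argument.
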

\begin{proof}
Let $U\subset Z$. From (\ref{usefulidentities2}) and
(\ref{usefulidentities3}) we get the equivalences: $U\cap
(E^{-1}\circ F)_x\neq\varnothing$ $\Leftrightarrow$ $U\cap
E^{-1}(F_x)\neq\varnothing$ $\Leftrightarrow$ $E(U)\cap
F_x\neq\varnothing$.

Then, $\mult_{(E^{-1}\circ F)_x}\alpha=\#\{U\in\alpha:U\cap
(E^{-1}\circ F)_x\neq\varnothing\}=\#\{U\in\alpha: E(U)\cap
F_x\neq\varnothing\}\geq \#\{V\in E(\alpha):V\cap
F_x\neq\varnothing\}=\mult_{F_x} E(\alpha)$. Taking supreme over $x$
we get $\mult_{E^{-1}\circ F}\alpha\geq\mult_F E(\alpha)$.

The second inequality is deduced from the first one taking into
account that $\mult E(\alpha)=\mult_\Delta E(\alpha)$.
\end{proof}

\begin{prop} \label{dimcancov} Let $\xid$ be a metrizable compactification pack and consider
$(\hx,\ceo)$. Let $n\in\Nset\cup\{0\}$. Then,
\begin{enumerate}[a) ]
\item $\asdim (\hx,\ceo)\leq n$.
\item For every uniform cover $\beta$ of $\hx$ there exists an open, locally finite and uniform
cover $\alpha$ of $\hx$ such that $\beta\prec\alpha$ and
$\mult\alpha\leq n+1$.
\end{enumerate}
are equivalent.
\end{prop}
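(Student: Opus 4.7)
The direction b) $\Rightarrow$ a) will be immediate from the definition of $\asdim$: condition b) already supplies, for every uniform $\beta$, a uniform cover $\alpha\succ\beta$ with $\mult\alpha\leq n+1$, which is exactly property a) of $\asdim(\hx,\ceo)\leq n$. All of the work will go into a) $\Rightarrow$ b).

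For that direction, given a uniform cover $\beta$ of $\hx$, my plan is a three-step \emph{enlarge-and-pull-back} construction. First, I will use Proposition \ref{muchoscanonicos} to produce a canonical cover $\tilde\beta$ of $\xid$ with $\beta\prec\tilde\beta$; by Corollary \ref{caractcanon} this $\tilde\beta$ is open, locally finite and uniform, but it carries no a priori multiplicity control. Second, applying hypothesis a) to $\tilde\beta$ I will obtain a uniform cover $\alpha'$ with $\tilde\beta\prec\alpha'$ and $\mult\alpha'\leq n+1$; this one has the correct multiplicity but need not be open or locally finite. Third, I will collapse $\alpha'$ back inside $\tilde\beta$: choose, for every $\tilde U\in\tilde\beta$, a parent $U'(\tilde U)\in\alpha'$ with $\tilde U\subset U'(\tilde U)$, and for each $U'\in\alpha'$ set
\[V_{U'}=\bigcup\{\tilde U\in\tilde\beta:U'(\tilde U)=U'\}\subset U'.\]
The candidate cover is $\alpha=\{V_{U'}:V_{U'}\neq\varnothing\}$.

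I expect the verifications for $\alpha$ to be routine, since every property is inherited from one of the two ingredients: each $V_{U'}$ is open as a union of open members of $\tilde\beta$; $\alpha$ covers $\hx$ because $\tilde\beta$ does; the inclusions $V_{U'}\subset U'$ yield $\Deelta(\alpha)\subset\Deelta(\alpha')\in\ceo$ and $\mult\alpha\leq\mult\alpha'\leq n+1$; local finiteness transfers from $\tilde\beta$, because any neighborhood of a point that meets only finitely many $\tilde U$ can meet $V_{U'}$ only through one of those $\tilde U$; and $\beta\prec\tilde\beta\prec\alpha$ gives the required refinement. The only non-mechanical step is the parent assignment itself: the conceptual point is that openness and local finiteness come from the topological side (the canonical cover of $\xid$), while the multiplicity bound comes from the coarse-geometric side (hypothesis a)), and the indexing $\tilde U\mapsto U'(\tilde U)$ is what glues these two pieces of information into a single cover. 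I do not foresee any real obstacle beyond this bookkeeping.
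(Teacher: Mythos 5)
Your proof is correct, but it takes a genuinely different route from the paper's. The paper works with the controlled-set form of the $\asdim$ definition: it fixes an open symmetric controlled neighborhood $E$ of the diagonal, obtains a uniform cover $\gamma$ with $\mult_{\Deelta(\beta)\circ E^2}\gamma\leq n+1$, and then \emph{thickens} $\gamma$ to $\alpha=E\circ\Deelta(\beta)(\gamma)$. Openness and local finiteness of $\alpha$ come out of the thickening itself (the $E$-balls are open, and $\mult_{E_x}\alpha\leq\mult_E\alpha\leq n+1$ is finite), and $\beta\prec\alpha$ is built in. You instead start from the uniform-cover form of the definition and proceed top-down: first pass from $\beta$ to a canonical cover $\tilde\beta\succ\beta$ via Proposition \ref{muchoscanonicos} (importing openness and local finiteness), then get $\alpha'\succ\tilde\beta$ with $\mult\alpha'\leq n+1$ from hypothesis a), and finally \emph{shrink} each $U'\in\alpha'$ to $V_{U'}=\bigcup\{\tilde U\in\tilde\beta:U'(\tilde U)=U'\}$. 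Your bookkeeping claims all hold: openness and local finiteness of $\alpha$ descend from $\tilde\beta$ exactly as you say; the inclusion $V_{U'}\subset U'$ gives $\Deelta(\alpha)\subset\Deelta(\alpha')$, hence uniformity; and since the assignment $V\mapsto U'$ (choosing one index per set $V\in\alpha$) is injective, $\mult\alpha\leq\mult\alpha'\leq n+1$. The two proofs lean on the properness of $\ceo$ through different doors — the paper needs an open controlled neighborhood of the diagonal, while you need Proposition \ref{muchoscanonicos} (hence Lemma \ref{basicodefrec}) — but both doors are available since $\tx$ is metrizable. Your argument is slightly less self-contained (it leans on the earlier machinery for canonical covers, and Lemma \ref{cambiomult} is not needed), but it is conceptually transparent: shrink to gain openness rather than thicken to gain it. In the reverse direction, b) $\Rightarrow$ a) is indeed immediate from the definition, just as you and the paper observe.
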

\begin{proof}
Taking into account Corollary \ref{caractcanon}, it is obvious that b)
implies a). Let us see the reciprocal.

Consider a uniform cover $\beta$ of $\hx$. Let $E$ be an open,
symmetric and controlled neighborhood of the diagonal. Since $\asdim
(\hx,\ceo)\leq n$, there exists a uniform cover $\gamma$ of $\hx$
such that $\mult_{\Deelta(\beta)\circ E^2}(\gamma)\leq n+1$. Let
$\alpha=E\circ \Deelta(\beta)(\gamma)$. Then, $\mult_E\alpha=\mult_E
\left(E\circ \Deelta(\beta)(\gamma)\right)\leq\mult_{E^2}
\Deelta(\beta)(\gamma)\leq\mult_{\Deelta(\beta)\circ E^2}
(\gamma)\leq n+1$. Particulary, $\mult \alpha\leq\mult_E\alpha \leq
n+1$.

Clearly, $\alpha$ is uniform. To prove that $\alpha$ is open, fix
$V\in\alpha$ and take $U\in\gamma$ such that
$V=E(\Deelta(\beta)(U))$. Since
$V=\bigcup_{x\in\Deelta(\beta)(U)}E_x$ (see
(\ref{usefulidentities6})) and each $E_x$ is open, we have that $V$
is open. Moreover, for every $x\in \hx$,
$\mult_{E_x}\alpha\leq\mult_E\alpha\leq n+1<\infty$ and we get that
$\alpha$ is locally finite.

Finally, fix $W\in\beta$ and take $U\in \gamma$ with $W\in
U\neq\varnothing$. Then, by (\ref{usefulidentities1}), $W\subset
\bigcup_{\substack {W'\in \beta\\U\cap W'\neq
\varnothing}}W'=\beta(U)=\Deelta(\beta)(U)\subset E\circ
\Deelta(\beta)(U)\in\alpha$. Thus, $\beta\prec\alpha$.

Therefore, $\alpha$ is the desired cover.\end{proof}

\begin{teo}\label{dimasdimcanonicos} Let $\xid$ be a metrizable compactification pack and consider $(\hx,\ceo)$. Let $n\in\Nset\cup \{0\}$. Then,
\begin{enumerate}[a) ]
\item $\dim X\leq n$.
\item $\asdim(X,\ceo)\leq n+1$.
\item For every uniform cover $\beta$ of $\hx$ there exists a
canonical cover $\alpha\succ\beta$ which multiplicity is less than
or equal to $n+2$.
\item For every canonical cover $\beta$ of $\hx$ there exists canonical cover $\alpha\succ\beta$ which
multiplicity is less than or equal to $n+2$.\end{enumerate} are
equivalent.
\end{teo}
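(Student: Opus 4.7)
The plan is to establish the cycle (a) $\Leftrightarrow$ (b), (b) $\Rightarrow$ (c) $\Rightarrow$ (d) $\Rightarrow$ (b), drawing almost entirely on results already in the section.

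First, (a) $\Leftrightarrow$ (b) is immediate from Grave's Theorem: $\asdim(\hx,\ceo)=\dim X+1$, so $\dim X\leq n$ exactly when $\asdim(\hx,\ceo)\leq n+1$.

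Next, (b) $\Rightarrow$ (c). Apply Proposition \ref{dimcancov} with the dimension bound $n+1$ in place of $n$. Given a uniform cover $\beta$, it produces an open, locally finite, uniform cover $\alpha\succ\beta$ with $\mult\alpha\leq n+2$. By Corollary \ref{caractcanon}, an open locally finite cover that is controlled (equivalently, uniform) for $\ceo$ is exactly a canonical cover of $\xid$, so $\alpha$ is the required canonical cover.

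The implication (c) $\Rightarrow$ (d) is essentially free: every canonical cover is in particular uniform (again by Corollary \ref{caractcanon}), so (c) applied to the canonical cover $\beta$ yields a canonical $\alpha\succ\beta$ with $\mult\alpha\leq n+2$.

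For (d) $\Rightarrow$ (b), I would use Proposition \ref{muchoscanonicos} to pass any uniform cover up to a canonical one. Concretely, given a uniform cover $\beta$ of $\hx$, Proposition \ref{muchoscanonicos} furnishes a canonical cover $\gamma$ with $\beta\prec\gamma$. Applying hypothesis (d) to $\gamma$, we obtain a canonical cover $\alpha\succ\gamma$ with $\mult\alpha\leq n+2$. Since refinement is transitive, $\beta\prec\alpha$. As $\alpha$ is uniform, this verifies the definition of $\asdim(\hx,\ceo)\leq n+1$, closing the cycle.

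The only genuine content is encapsulated in the earlier machinery (Grave's theorem, Proposition \ref{dimcancov}, Corollary \ref{caractcanon} and Proposition \ref{muchoscanonicos}); the main conceptual point is simply that canonical covers are \emph{cofinal} among uniform covers of $(\hx,\ceo)$ in the refinement order, which is exactly what Proposition \ref{muchoscanonicos} provides and what is needed to strengthen "uniform" to "canonical" in the $\asdim$ criterion. The only step that requires care is matching the multiplicity bounds: $\asdim\leq n+1$ corresponds to multiplicity $\leq n+2$, which is why clauses (c) and (d) are stated with the bound $n+2$.
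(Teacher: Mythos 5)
Your proof is correct and follows exactly the same route the paper takes: Grave's theorem for (a)$\Leftrightarrow$(b), Proposition \ref{dimcancov} together with Corollary \ref{caractcanon} for (b)$\Rightarrow$(c), the observation that canonical covers are uniform for (c)$\Rightarrow$(d), and Proposition \ref{muchoscanonicos} to supply a canonical cover refining any given uniform cover for (d)$\Rightarrow$(b). The paper's own proof is a terse three-line version of precisely this argument.
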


\begin{proof} Take into account Corollary \ref{caractcanon}. The equivalence between a) and b) is Grave's theorem.
b) implies c) due to Proposition \ref{dimcancov}. The implication c)
$\Rightarrow$ d) is obvious. d) implies b) because of Proposition
\ref{muchoscanonicos}.

Implications a)$\Rightarrow$ b), a)$\Rightarrow$ c) and
a)$\Rightarrow$ d) are also a consequence of Proposition
\ref{demgrave2}.
\end{proof}

\subsection{Canonical covers on special spaces}

Let $\xid$ be a metrizable compactification pack. If $\tx$ is the
Hilbert cube $Q$ or of the finite dimensional cube $[0,1]^n$,
Theorem \ref{dimasdimcanonicos} proves (B) partially, but we have to
show that the multiplicity of every canonical cover is greater than
$\dim X+2$, and not just for the bigger ones, as stated.

In general, just one canonical cover can not say anything about the
dimension of $X$. For example, suppose that $\hx$ is countable ---to
be more specific, let $Y$ be a metrizable compact set, let $\{y_n\}$
be a dense sequence in $Y$ and put $X=Y\times\{0\}$, $\hx=\left(
\bigcup_{n\in\Nset}\{y_1,\cdots,y_n\}\times\{\frac{1}{n}\}\right)$
and $\tx=X\cup\hx$, all of them with the topology induced by
$Y\times[0,1]$---. $\{\{x\}:x \in\hx\}$ is a canonical cover of
$\hx$ whose  multiplicity  is $1$, independently on the dimension of
$X$.

For proving (A), we need the special topological properties of the
Z-sets of $Q$ or $[0,1]^n$. By this reason, we will define the
\textit{cylindrical subsets}, a class of subsets with involve the
Z-sets of the Hilbert cube or the finite dimensional cube, which
have the properties we need to prove (A).

\begin{deff} A subset $X$ of a topological space $\tx$ is said to be
cylindrical if there exists an embedding $j:X\times
[0,1]\hookrightarrow\tx$ such that $j(x,0)=x$ for every $x\in X$.
\end{deff}

\begin{rem} If $X$ has a tubular neighborhood in $\tx$, then $X$ is
cylindrical in $\tx$, but the reciprocal is false (take for example
$X=\{0\}$ and $\tx=[-1,1]$).\end{rem}

\begin{deff} We say that a compactification pack $\xid$ is
cylindrical if $X$ is a cylindrical subset of $\tx$.\end{deff}

\begin{lemma}\label{conoenhc} Every Z-set of the Hilbert cube is a cylindrical subset.\end{lemma}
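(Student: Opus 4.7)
The plan is to exploit the product self-similarity $Q\cong Q\times[0,1]$ together with Anderson's $Z$-set unknotting theorem to transport an obvious cylindrical structure onto the given $Z$-set $X$.

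First, I fix a homeomorphism $\psi:Q\times[0,1]\rightarrow Q$; such a map is immediate from $Q=[0,1]^\Nset$ by a shift of coordinates. The subset $X\times\{0\}$ is a $Z$-set of $Q\times[0,1]$: for any $\epsil>0$ one pushes off by $(q,t)\mapsto (q,\max\{t,\delta\})$ for a suitably small $\delta>0$, obtaining a continuous self-map $\epsil$-close to the identity whose image misses $Q\times\{0\}\supset X\times\{0\}$. Therefore $X':=\psi(X\times\{0\})$ is a $Z$-set of $Q$, and the restriction of $\psi$ to $X\times\{0\}$ induces a homeomorphism $\phi:X\rightarrow X'$ given by $\phi(x)=\psi(x,0)$.

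Second, I invoke Anderson's $Z$-set unknotting theorem (see, e.g., Chapman's \emph{Lectures on Hilbert Cube Manifolds}), which states that any homeomorphism between two $Z$-sets of $Q$ extends to a self-homeomorphism of $Q$. Applied to $\phi^{-1}:X'\rightarrow X$, this furnishes a homeomorphism $h:Q\rightarrow Q$ with $h|_{X'}=\phi^{-1}$. Define
$$j:=(h\circ\psi)|_{X\times[0,1]}:X\times[0,1]\rightarrow Q.$$
As the restriction of the homeomorphism $h\circ\psi:Q\times[0,1]\rightarrow Q$ to a subspace, $j$ is an embedding, and for every $x\in X$
$$j(x,0)=h(\psi(x,0))=h(\phi(x))=\phi^{-1}(\phi(x))=x,$$
so $X$ is cylindrical in $Q$, as required.

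The main obstacle is the appeal to $Z$-set unknotting, which is a deep result but a standard cornerstone of Hilbert cube topology. Some form of it seems indispensable here: the definition of $Z$-set directly yields continuous maps pushing $X$ off itself, but producing an \emph{injective} one-parameter family $\{j(\cdot,t)\}_{t\in[0,1]}$ that starts at the inclusion $X\hookrightarrow Q$ requires precisely the ambient-homeomorphism flexibility that unknotting encodes.
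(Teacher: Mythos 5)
Your proof is correct and follows essentially the same route as the paper: identify $Q\times[0,1]$ with $Q$, observe that $X\times\{0\}$ and $X$ are $Z$-sets of their respective ambient cubes, and apply Anderson's $Z$-set unknotting theorem to extend the obvious homeomorphism $X\times\{0\}\to X$ to an ambient homeomorphism, whose restriction to $X\times[0,1]$ gives the desired embedding. You are somewhat more explicit than the paper (fixing the homeomorphism $\psi$, verifying that $X\times\{0\}$ is a $Z$-set via the push-off $(q,t)\mapsto(q,\max\{t,\delta\})$, and composing carefully), but the key ingredient and structure are the same.
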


\begin{proof}
Anderson's theorem (see \cite{hc}, Theorem II-11.1 or
\cite{Anderson}) states that every homeomorphism between two Z-sets
of $Q$ can be extended to a homeomorphism of $Q$ onto itself.

Let $X$ be a Z-set of $Q$. Since $X\times  \{0\}$ and $X$ are Z-sets
of $Q\times[0,1]\approx Q$ and $Q$ respectively and
$g:X\times\{0\}\rightarrow X$, $(x,0)\rightarrow x$ is a
homeomorphism, there exists a homeomorphism
$h:Q\times[0,1]\rightarrow Q$ which extends $g$. Particulary,
$h|_{X\times[0,1]}:X\times[0,1]\rightarrow Q$ is and embedding and
$X$ is a cylindrical subset of $Q$.\end{proof}

\begin{rems}
If $h$ is the function  in Lemma \ref{conoenhc}'s proof and
$f=h^{-1}$, then $f:Q\rightarrow Q\times[0,1]$ is a homeomorphism
such that $f(X)\subset Q\times\{0\}$. Since the reciprocal is
obvious, we have an easy proof of the known result in infinite
dimensional topology: A closed subset $X$ of $Q$ is a Z-set of $Q$
if and only if there exists a homeomorphism $f:Q\rightarrow
Q\times[0,1]$ such that $f(X)\subset Q\times\{0\}$.

\item In $Q$, to be cylindrical is stronger than to be nowheredense
(see
$\left(\left\{\frac{1}{n}:n\in\Nset\right\}\cup\{0\}\right)\times Q$
in $[0,1]\times Q$) and weaker than to be a Z-set (see
$\left\{\frac{1}{2}\right\}\times Q$ in $[0,1]\times Q$).
\end{rems}

Using example VI 2 of \cite{hw}, it follows easily that:

\begin{lemma}\label{carazzsetcubito} Let $n\in\Nset$. The Z-sets of $[0,1]^{n}$ are precisely the
closed subsets of $[0,1]^n\bs (0,1)^n$\end{lemma}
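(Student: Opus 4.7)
The plan is to prove the two inclusions separately, with the forward direction being a direct construction and the reverse direction relying on the cited dimension-theoretic fact from Hurewicz-Wallman that interior points of $[0,1]^n$ cannot be avoided by small perturbations of the identity.

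For the sufficient direction, suppose $X$ is closed with $X \subset [0,1]^n \setminus (0,1)^n$ (that is, $X$ lies on the boundary of the cube). Given $\varepsilon > 0$, I would construct an explicit retraction toward the center $c = (1/2,\dots,1/2)$: define $f_\delta(x) = c + (1-\delta)(x-c)$ for some small $\delta > 0$. A one-line computation gives $f_\delta(x) - x = -\delta(x-c)$, so $\|f_\delta - \mathrm{id}\|_\infty \le \delta \sqrt{n}/2$, which is $<\varepsilon$ for $\delta$ small enough. Moreover $f_\delta([0,1]^n) = [\delta/2,\,1-\delta/2]^n \subset (0,1)^n$, hence $f_\delta([0,1]^n) \cap X = \varnothing$. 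This verifies the Z-set definition.

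For the necessary direction, suppose $X$ is a Z-set and, for contradiction, that there exists $x_0 \in X \cap (0,1)^n$. Pick $r > 0$ small enough so that the closed ball $\overline{\BB(x_0,r)}$ is contained in $(0,1)^n$. By the Z-set property applied with $\varepsilon = r$, there exists a continuous $f:[0,1]^n \to [0,1]^n$ with $\|f - \mathrm{id}\|_\infty < r$ and $f([0,1]^n) \cap X = \varnothing$; in particular $x_0 \notin f([0,1]^n)$. The content of Example VI 2 of \cite{hw} is precisely that such a map cannot miss an interior point: informally, the identity is an \emph{essential} map of the cube, and any map sufficiently close to it must still cover each interior point. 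Concretely, the straight-line homotopy $H(x,t) = tf(x)+(1-t)x$ on the sphere $\partial \BB(x_0,r)$ satisfies $\|H(x,t) - x_0\| \ge r - t\|f(x)-x\| > 0$ for all $t \in [0,1]$, so the restriction of $f$ to the ball has the same degree at $x_0$ as the identity, namely $1$, forcing $x_0 \in f(\BB(x_0,r)) \subset f([0,1]^n)$. This contradiction shows $X \cap (0,1)^n = \varnothing$.

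The main (and only) obstacle is the essentiality/degree argument in the second direction; since the paper defers to \cite{hw}, Example VI 2, the write-up can simply invoke it rather than reproducing the degree computation. The first direction is essentially a picture, and once the shrinking map is written down, there is nothing further to verify.
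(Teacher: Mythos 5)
Your proposal is correct. The paper itself gives no proof of this lemma: it simply states that the characterization ``follows easily'' from Example VI~2 of Hurewicz--Wallman, which is essentially the fact that the identity map of the $n$-cube is essential (equivalently, that interior points of $[0,1]^n$ are stable under small perturbations of the identity). Your two-part argument supplies precisely the details the paper elides: the explicit homothety toward the center $c$ handles the easy direction, and the straight-line homotopy plus degree argument at $x_0$ is a clean self-contained way to obtain the essentiality statement being cited. One small nicety: since the Z-set definition in the paper uses the sup metric $\dd'$ over maps with respect to an arbitrary ambient metric $\dd$, the estimate $\|f_\delta - \mathrm{id}\|\le \delta\sqrt{n}/2$ should be read as ``$f_\delta\to\mathrm{id}$ uniformly as $\delta\to 0$,'' which holds for any metric inducing the cube topology; this does not affect the argument. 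Overall your write-up matches the intent of the paper's citation and fills it in correctly.
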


\begin{lemma} Let $n\in\Nset$. Every Z-set of $[0,1]^n$ is a cylindrical
subset.
\end{lemma}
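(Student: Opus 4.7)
The plan is to push $X$ radially toward the center of the cube. By Lemma \ref{carazzsetcubito}, a Z-set $X$ of $[0,1]^n$ is a closed subset of $[0,1]^n\setminus (0,1)^n$, so I may assume $X$ sits in the topological boundary of the cube. Letting $c=(1/2,\ldots,1/2)$ denote the center, I would define
\[
j : X\times [0,1] \to [0,1]^n, \qquad j(x,t) = \left(1-\tfrac{t}{2}\right)x + \tfrac{t}{2}\,c.
\]
This map is visibly continuous and satisfies $j(x,0)=x$. Because $X\times[0,1]$ is compact and $[0,1]^n$ is Hausdorff, checking that $j$ is the desired embedding reduces to checking that $j$ is injective.

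For injectivity, I would suppose $j(x,t)=j(y,s)$. When $t=s$, the identity collapses to $(1-t/2)(x-y)=0$ and gives $x=y$ directly, since $1-t/2\geq 1/2>0$. When $t\ne s$, say $t<s$, rearranging yields
\[
x = \lambda y + (1-\lambda)\,c, \qquad \lambda = \frac{1-s/2}{1-t/2} \in (0,1).
\]

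The only real content, which is the main (though very minor) obstacle, is the geometric observation that this forces $x\in(0,1)^n$: since $c\in(0,1)^n$ and $\lambda\in(0,1)$, each coordinate $\lambda y_i + (1-\lambda)/2$ lies in $(0,1)$ for any $y_i\in[0,1]$, because the $(1-\lambda)/2$ term is strictly positive and the sum stays strictly below $1$. This contradicts $x\in X\subset [0,1]^n\setminus(0,1)^n$. Hence $j$ is injective, and the construction of the cylindrical embedding is complete.
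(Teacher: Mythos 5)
Your proof is correct and takes essentially the same approach as the paper's: both invoke Lemma \ref{carazzsetcubito} to place $X$ on the boundary of the cube and then embed $X\times[0,1]$ by radially contracting $X$ toward the center. The paper works in $[-1,1]^n$, writing the map as the scalar contraction $(x,t)\mapsto\left(1-\tfrac{t}{2}\right)x$, which is precisely your $j$ under the affine identification of $[0,1]^n$ with $[-1,1]^n$ sending $c$ to the origin; you simply spell out the injectivity check that the paper leaves as ``easy to check.''
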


\begin{proof} Consider the copy $[-1,1]^n$ of $[0,1]^n$. Suppose that $X$ is a Z-set of $[-1,1]^n$. According with Lemma \ref{carazzsetcubito},
$X\subset [-1,1]^n\bs (-1,1)^n$. Consider the continuous maps
$j_1:X\times [0,1]\rightarrow X\times\left[\frac{1}{2},1\right]$,
$(x,t)\rightarrow \left(x,\frac{1-t}{2}\right)$ and
$j_2:X\times\left[\frac{1}{2},1\right]\rightarrow [-1,1]^n$,
$(x,t)\rightarrow t \cdot x$. It is easy to check that $j_2\circ
j_1:X\times[0,1]\rightarrow [-1,1]^n$ is an embedding such that
$j_2\circ j_1(x,0)=x$ for every $x\in X$.\end{proof}

\begin{lemma}\label{mejoraproducto01}
Let $X$ be a compact metric space with $\dim X\geq n<\infty$. Then,
there exists an $\epsil>0$ such that every open and finite cover
$\alpha$ of $X\times [0,1]$ with
\begin{enumerate}[a) ] \item $\mesh\{\pi_1(U):U\in\alpha\}<\epsil$
(where $\pi_1:X\times[0,1]\rightarrow X$ is the projection).
\item there does not exist any $U\in
\alpha$ which intersect both $X\times\{0\}$ and
$X\times\{1\}$.\end{enumerate} satisfies $\mult\alpha\geq n+2$.
\end{lemma}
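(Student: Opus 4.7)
The plan is to argue by contradiction, working with the lower semi-continuous multiplicity function of $\alpha$.

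By the standard dimension-theoretic characterization for compact metric spaces, the hypothesis $\dim X \geq n$ gives some $\epsilon > 0$ such that every finite open cover of $X$ of mesh less than $\epsilon$ has multiplicity at least $n+1$; I will claim this $\epsilon$ works. Suppose $\alpha$ satisfies (a) and (b) and assume for contradiction that $\mult \alpha \leq n+1$. Split $\alpha = \alpha_0 \sqcup \alpha_m \sqcup \alpha_1$ according to whether an element meets $X \times \{0\}$, neither end, or $X \times \{1\}$; condition (b) makes this a genuine partition. Define $f(y), g(y), h(y)$ as the number of elements of $\alpha_0, \alpha_1, \alpha_m$ respectively containing $y$, so $\mu = f + g + h$ is the total multiplicity function; all four functions are lower semi-continuous since each element of $\alpha$ is open.

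By (a), the slice of $\alpha$ at $t = 0$ projects to an open cover of $X$ of mesh $< \epsilon$ and multiplicity $\leq \mult\alpha \leq n+1$, so by choice of $\epsilon$ its multiplicity equals $n+1$. This produces a point $x \in X$ with $f(x, 0) = n+1$ (and $g(x, 0) = h(x, 0) = 0$, since any element containing $(x,0)$ lies in $\alpha_0$).

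The key step is to analyse the level set $F := \mu^{-1}(n+1)$, which is open by lower semi-continuity of $\mu$ and the hypothesis $\mu \leq n+1$. On $F$ the identity $f + g + h = n+1$ combined with the lsc of each summand forces each of $f,g,h$ to be continuous on $F$ (each is usc as $n+1$ minus the lsc sum of the other two), hence locally constant. A further lsc argument shows that $F_{n+1} := \{y \in F : f(y) = n+1\}$ is not only open in $F$ (hence in $X \times [0,1]$) but also closed in $X \times [0,1]$: if $y_i \in F_{n+1}$ converges to $y$, then lsc of $f$ gives $f(y) \geq n+1$, and combined with $\mu \leq n+1$ this forces $\mu(y) = n+1$, $y \in F$, and $f(y) = n+1$.

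Hence $F_{n+1}$ is a non-empty clopen subset of $X \times [0,1]$, so it contains the connected component of $(x,0)$, which is $C \times [0,1]$ for $C$ the component of $x$ in $X$. This forces $f(x, 1) = n+1$, contradicting the fact that (by (b)) no element of $\alpha_0$ meets $X \times \{1\}$, so $f \equiv 0$ on $X \times \{1\}$. The main technical hurdle is the semi-continuity bookkeeping: extracting continuity of $f$ on $F$ from lsc of the summands under the rigid constraint $\mu \equiv n+1$, and then promoting openness of $F_{n+1}$ in $F$ to clopenness in all of $X \times [0,1]$. Condition (b) enters decisively at the end through the disjointness of $\alpha_0$ and $\alpha_1$, which produces the vertical incompatibility between the top and bottom slices that connectedness of the fibre $\{x\} \times [0,1]$ cannot absorb.
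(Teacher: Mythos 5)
Your route via semicontinuity and connectedness is genuinely different from the paper's proof (which tiles $X\times[0,2k]$ with alternately reflected copies of $\alpha$, glues elements across the seams, rescales to a fine cover of $X\times[0,1]$, and invokes $\dim(X\times[0,1])=\dim X+1$), but it has a real gap at the assertion that $F_{n+1}$ is closed. Lower semicontinuity of $f$ is the statement $\liminf_{z\to y}f(z)\geq f(y)$, equivalently that super-level sets $\{f\geq c\}$ are \emph{open} for integer-valued $f$ — not closed. So given $y_i\to y$ with $f(y_i)=n+1$, lsc yields $n+1=\liminf f(y_i)\geq f(y)$, i.e.\ $f(y)\leq n+1$, which you already knew from $\mu\leq n+1$; it does \emph{not} yield $f(y)\geq n+1$. (That inequality would follow from \emph{upper} semicontinuity, which holds for multiplicity functions of closed families, not open ones.) Thus $F_{n+1}=\{f\geq n+1\}$ is open with no reason to be closed: moving up a fibre $\{x\}\times[0,1]$ you can exit all $n+1$ elements of $\alpha_0$ while $\mu$ drops below $n+1$, so the boundary of $F_{n+1}$ sits in $\{\mu\leq n\}$, outside $F$, where the local-constancy of $f$ gives you nothing. $F_{n+1}$ is clopen \emph{in} $F$, but $F$ is only open, and with $F_{n+1}$ merely open the connectedness of $C\times[0,1]$ produces no contradiction.

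Everything up to that point is sound — the choice of $\epsilon$ via $\dim X\geq n$, the slice-at-$t=0$ argument producing $(x,0)$ with $f(x,0)=n+1$, and the continuity of $f,g,h$ on $F$ — but the step that lets you travel from $t=0$ to $t=1$ is precisely the step that fails. To salvage a semicontinuity/separation approach you would need something closer to an essential-map or partition argument (showing that the $\alpha_0$-region and $\alpha_1$-region cannot be separated without high multiplicity), which is not a small repair; the paper's reflect-and-rescale construction sidesteps the issue by converting condition (b) into a bound on the vertical mesh instead.
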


\begin{proof}
Remember that if $X$ is a compact Hausdorff space, then $\dim
X\times[0,1]=\dim X+1$. It is a corollary of \cite{hur2}, page 194.
Also, from Theorem 7, Theorem 8 or Theorems 4-6 of \cite{mor}.

Consider on $X\times[0,1]$ the supremum metric. Since $\dim X\times
[0,1]= \dim X+1\geq n+1$, there exists $\epsil>0$ such that, for
every open cover $\beta$ of $X\times[0,1]$ with $\mesh\beta<\epsil$,
we have $\mult\beta \geq n+2$.

Let $\alpha$ be an open cover of $X$ satisfying properties a) and b)
for $\epsil$. Take $k\in\Nset$ with $\frac{1}{2k}<\epsil$. Let us
construct on $X\times [0,2k]$ a cover $\gamma_0$ like in the figure
\ref{figuracubiertarepetida}.

\begin{figurapng}{.37}{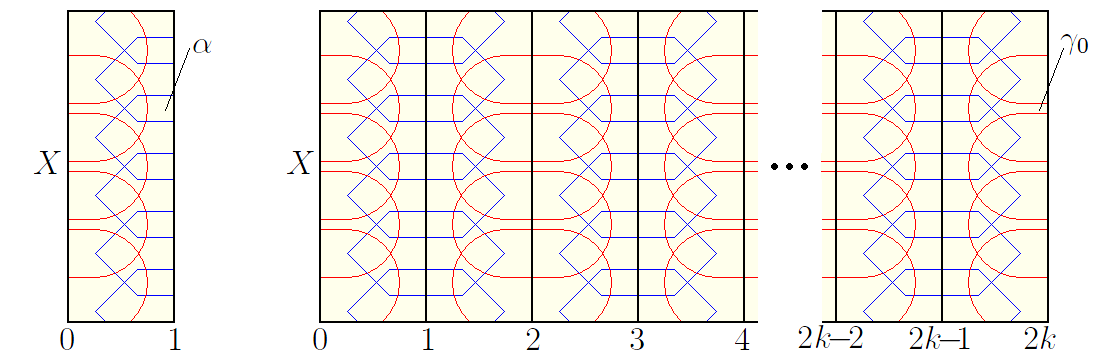}{Cover $\alpha$ of $X\times[0,1]$
and cover $\gamma_0$ of $X\times[0,2k]$}\label{figuracubiertarepetida}\end{figurapng}

Let $\alpha'$ be the symmetric cover of $\alpha$ on $X\times[0,1]$
given by $\alpha'=\{\phi(U):U\in\alpha\}$, where
$\phi:X\times[0,1]\rightarrow X\times[0,1]$ is the symmetry
$\phi(x,t)=(x,1-t)$. Pull forward the cover $\alpha$ to the
intervals $[2j,2j+1]$, for $j=0,\dots,k-1$, by means of the
translations $f_j:X\times \Rset\rightarrow X\times \Rset$,
$f_j(x,t)=(x,t+2j)$ and pull forward the cover $\alpha'$ to the
intervals $[2j+1,2j+2]$, for $j=0,\dots,k-1$, by means of the
translations $f'_j:X\times\Rset\rightarrow X\times \Rset$,
$f'_j(x,t)=(x,t+2j+1)$. Let $\gamma_0$ be the cover of $X\times
[0,2k]$ given by the union of those covers joining every pulled
$U\in\alpha$ which meets $X\times\{i\}$, for $i=1,\dots,2k-1$, with
their reflections on the pulled subsets of $\alpha'$.

More accurately,

$$\begin{array}{rl}\gamma_0=&\hspace{-8pt}\{f_j(U):U\in\alpha,0\leq j\leq
k-1,f_j(U)\cap X\times\{i\}=\varnothing\,\forall
i=1,\dots,2k-1\}\cup\\&\hspace{-8pt} \{f'_j\circ
\phi(U):U\in\alpha,0\leq j\leq k-1,f'_j\circ\phi(U)\cap
X\times\{i\}=\varnothing\,\forall
i=1,\dots,2k-1\}\cup\\&\hspace{-8pt}
 \{f_j(U)\cup f'_j\circ\phi(U):U\in\alpha,U\cap
X\times\{1\}\neq\varnothing,0\leq j\leq k-2\}\cup\\&\hspace{-8pt}
\{f'_j\circ\phi(U)\cup f_{j+1}(U):U\in\alpha,U\cap
X\times\{0\}\neq\varnothing,1\leq j\leq k-1\}\end{array}$$

Let $\gamma$ be the cover of $X\times[0,1]$ given by
$\gamma=\{\psi(U):U\in\gamma_0\}$, where $\psi:X\times
[0,2k]\rightarrow X\times [0,1]$ is the homothety
$\psi(x,t)=\left(x,\frac{1}{2k}t\right)$. It is easy to check that
$\gamma$ is an open cover of $X\times [0,1]$ with
$\mesh\gamma<\epsil$ which has the same multiplicity as $\alpha$. By
definition of $\epsil$, we get $\mult\alpha=\mult\gamma\geq
n+2$.\end{proof}

\begin{rem} We get an easy generalization of Lemma \ref{mejoraproducto01} by changing ``there exists $\epsil>0$'' by ``there exists an open and
finite cover of $X$ $\alpha_0$'' and ``$\mesh\pi_1(\alpha)<\epsil$''
by ``$\pi_1(\alpha)\prec\alpha_0$''. Hint: if $X$ and $Y$ are
compact spaces and $\alpha$ is an open and finite cover of $X\times
Y$, then there exists two open and finite covers $\beta_1$ and
$\beta_2$ of $X$ and $Y$ respectively such that $\{U\times
V:U\in\beta_1,V\in\beta_2\}\prec\alpha$.
\end{rem}

The following result is based on Theorem 5.9 of \cite{grv} (pag.
59):

\begin{lemma}\label{teodegrave} Let $X$ be a compact metric
space. Consider the compactification pack
$(X\times\{0\},X\times(0,1],X\times[0,1])$ and consider
$(X\times(0,1],\ceo)$. If $\alpha$ is an open and uniform cover of
$X\times(0,1]$, then $\dim X\leq \mult\alpha-2$. Particulary, it
happens if $\alpha$ is a canonical cover.
\end{lemma}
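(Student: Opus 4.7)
My plan is to argue by contradiction via Lemma \ref{mejoraproducto01}. Assume $m := \mult\alpha < \infty$ (otherwise the inequality is vacuous) and suppose, for a contradiction, that $\dim X \geq m - 1$. Applying Lemma \ref{mejoraproducto01} with $n = m - 1$ produces an $\epsil > 0$ such that every open finite cover of $\tx := X \times [0, 1]$ satisfying (a) $\pi_1$-mesh $< \epsil$ and (b) no set meeting both $X \times \{0\}$ and $X \times \{1\}$ has multiplicity at least $m + 1$. The goal is to build from $\alpha$ such a cover $\beta$ of $\tx$ with $\mult\beta \leq m$, giving the contradiction and hence $\dim X \leq m - 2$. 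The final assertion of the lemma then follows because every canonical cover is, by definition, open and uniform.

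The first step is to use the uniformity of $\alpha$ to obtain a tubular neighborhood of the corona on which $\alpha$ is geometrically controlled. By Corollary \ref{caractcanon}, for each $x \in X$ there is an open neighborhood $W_x$ of $(x, 0)$ in $\tx$ with $\alpha(W_x) \subset V_x := \BB(x, \epsil/4) \times [0, \epsil/4)$. Compactness of $X$ provides a finite subcover $W_{x_1}, \dots, W_{x_n}$ of $X \times \{0\}$, and a standard argument gives $\delta_0 > 0$ with $X \times [0, \delta_0) \subset \bigcup_i W_{x_i}$. Hence every $U \in \alpha$ meeting $X \times (0, \delta_0)$ is contained in some $V_{x_i}$, and so has $\pi_1$-diameter $< \epsil/2$ and $\pi_2$-projection inside $[0, \epsil/4)$.

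Next, I assemble $\beta$ from two pieces. The far piece $\beta_{\text{far}}$ consists of open vertical tubes of the form $\BB(y_j, \epsil/4) \times (\delta_0/4, 1]$, whose horizontal bases form a small-mesh open cover of $X$; it covers $X \times [\delta_0/2, 1]$. The near piece $\beta_{\text{near}}$ covers $X \times [0, \delta_0/2]$ and is obtained from the $V_{x_i}$'s, each fused with a chosen $\alpha$-set lying inside $V_{x_i}$ in order to incorporate the corona slice $\BB(x_i, \epsil/4) \times \{0\}$ without introducing a fresh cover family. Then (a) is immediate from the $\pi_1$-diameter bounds, and (b) holds because near-sets have $\pi_2 < \epsil/4 < 1$ whereas far-sets have $\pi_2 > \delta_0/4 > 0$.

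The delicate step, which I expect to be the main obstacle, is guaranteeing $\mult\beta \leq m$: a naive superposition of the $\alpha$-sets with artificial ``caps'' around the corona would inflate the multiplicity by the multiplicity of the auxiliary $\BB(\cdot, \epsil/4)$-cover of $X$, which under $\dim X \geq m - 1$ is itself at least $m$. To prevent this inflation, the fusion step in the construction of $\beta_{\text{near}}$ must attach each cap $V_{x_i}$ to a single well-chosen $\alpha$-set inside $V_{x_i}$, so that throughout the overlap region the new cover inherits only the multiplicity of $\alpha$. Here property (a) of Definition \ref{defcontcoarest}---that each $U \in \alpha$ accumulates at most one point of the corona---provides a canonical distribution of caps to $\alpha$-sets and prevents double counting. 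Once $\mult\beta \leq m$ is established, Lemma \ref{mejoraproducto01} delivers the contradiction.
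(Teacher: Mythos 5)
Your contradiction set-up via Lemma \ref{mejoraproducto01} is the right strategy, and your preparatory use of Proposition \ref{recunif} / Corollary \ref{caractcanon} to get a tube on which $\alpha$-sets are small is also on the mark. However, the construction of the cover $\beta$ has a genuine gap that no amount of tweaking the ``fusion'' scheme will fix.

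The problem with $\beta_{\mathrm{near}}$: $\alpha$ covers only $X\times(0,1]$, so to cover the corona $X\times\{0\}$ you must adjoin caps, and you hope to bound the multiplicity of the caps by $\mult\alpha$ via the observation that each $U\in\alpha$ accumulates at one corona point at most. But the implication you need goes the other way---you would need each corona point to be accumulated by at most boundedly many $U\in\alpha$---and that is simply false. Even in the baby case $X=\{\mathrm{pt}\}$, the pairwise-disjoint sets $U_n=(1/(n+1),1/n)$ all accumulate at $0$, so $\mult\alpha=1$ while infinitely many $U$'s cluster at a single corona point. After fusion the caps covering a neighborhood of $(x,0)$ form an open cover of a neighborhood of $x$ in $X$, whose multiplicity is at least $\dim X+1\geq m$; there is no mechanism tying this to $\mult\alpha$. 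The far piece $\beta_{\mathrm{far}}$ has the same defect: a fresh $\epsil/4$-ball cover of $X$ cylindered up has multiplicity at least $\dim X+1\geq m$, already saturating your budget before you even paste in $\beta_{\mathrm{near}}$ on the overlap strip. So you cannot guarantee $\mult\beta\leq m$, and the contradiction does not materialize.

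The paper avoids both difficulties by never trying to extend $\alpha$ to a cover of $X\times[0,1]$ at all. Instead it restricts $\alpha$ to a compact band $X\times[\delta_2,\delta_1]$, chosen so that (i) the band is disjoint from the compact $K$ of Proposition \ref{recunif}, hence every $U\in\alpha$ meeting it has diameter $<\epsil$, and (ii) $\delta_2$ is small enough (using properness: $\alpha(X\times\{\delta_1\})\subset X\times(\delta_2,1]$) that no single $U$ spans both ends of the band. The trace $\gamma_0=\{U\cap X\times[\delta_2,\delta_1]:U\in\alpha\}$ is an honest cover of the band with $\mult\gamma_0\leq\mult\alpha$, small $\pi_1$-mesh, and no spanning set. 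Rescaling the band linearly onto $X\times[0,1]$ produces a cover $\gamma$ of the full cylinder satisfying the hypotheses of Lemma \ref{mejoraproducto01} with $\mult\gamma\leq m$, giving the contradiction cleanly. The crucial idea you are missing is this ``band-plus-rescale'' trick, which transplants a piece of $\alpha$ living away from the corona onto the whole cylinder, rather than trying to force $\alpha$ itself to reach $X\times\{0\}$.
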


\begin{proof}
If $\mult\alpha=\infty$, the result is obvious. Suppose now that
$\mult\alpha=n<\infty$. To get a contradiction, assume that $\dim
X\geq n-1$. By Lemma \ref{mejoraproducto01}, there exists $\epsil>0$
such that every open cover $\beta$ of $X\times[0,1]$ satisfying
properties a) and b) of that lemma, also satisfies $\mult\beta\geq
n+1$.

Take into account the characterization of uniform covers of
Proposition \ref{recunif}. Consider on $X\times[0,1]$ the supremum
metric. Since $\alpha$ is uniform, there exists a compact subset of
$X\times(0,1]$ such that $\diam U<\epsil$ for every $U\in\alpha$
with $K\cap U=\varnothing$. Take $\delta_1>0$ such that $K\subset
X\times (\delta_1,0]$.

Since $\alpha$ is proper, $\alpha(X\times\{\delta_1\})$ is
relatively compact. Take $\delta_2>0$ such that
$\alpha(X\times\{\delta_1\})\subset X\times (\delta_2,1]$. Let
$\gamma_0=\{U\cap X\times[\delta_2,\delta_1]:U\in\alpha\}$ and
consider the cover $\gamma$ of $X\times[0,1]$ given by
$\gamma=\phi^{-1}(\gamma_0)$, where $\phi:X\times[0,1]\rightarrow
X\times[\delta_0,\delta_1]$ is the homeomorphism
$\phi(x,t)=\big(x,t\delta_2+(1-t)\delta_1\big)$.

Clearly, $\gamma$ is an open cover of $X\times[0,1]$ such that
$\mult\gamma\leq\mult\alpha\leq n$, $\mesh\pi_1(\gamma)<\epsil$ and
no $V\in \gamma$ intersects both $X\times\{0\}$ and $X\times\{1\}$.
By Lemma \ref{mejoraproducto01}, $\mult\gamma\geq n+1$, in
contradiction with $\mult\gamma=\mult\gamma_0\leq\mult\alpha\leq n$.
Hence, $\dim X\leq n-2$.\end{proof}

\begin{prop}\label{lemareccanonicosenq} Let $\xid$ be a metrizable
cylindrical compactification pack. If $\alpha$ is an open and
uniform cover of $(\hx,\ceo)$, then $\dim X\leq \mult\alpha-2$.
\end{prop}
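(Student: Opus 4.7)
The plan is to reduce the statement to Lemma \ref{teodegrave} by restricting $\alpha$ to a cylindrical collar of $X$ inside $\tx$. Since $\xid$ is cylindrical, there is an embedding $j:X\times[0,1]\hookrightarrow\tx$ with $j(x,0)=x$ for every $x\in X$. Set $\widetilde{Y}=j(X\times[0,1])$ and $Y=\widetilde{Y}\bs X=j(X\times(0,1])$. Being the image of a compact space, $\widetilde{Y}$ is closed in $\tx$, so $(X,Y,\widetilde{Y})$ is itself a metrizable compactification pack, homeomorphic via $j$ to $\bigl(X\times\{0\},X\times(0,1],X\times[0,1]\bigr)$.

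Now define the restricted family $\alpha|_Y=\{U\cap Y:U\in\alpha\}$. It is obviously an open cover of $Y$, and $\mult(\alpha|_Y)\leq\mult\alpha$ (any subfamily of $\alpha|_Y$ with nonempty common intersection lifts to a subfamily of $\alpha$ of the same cardinality whose common intersection is also nonempty). The technical step is to show that $\alpha|_Y$ is uniform for $\ceo(X,Y,\widetilde{Y})$. By Corollary \ref{caractcanon}, this reduces to verifying that for every $x\in X$ and every open neighborhood $V_x$ of $x$ in $\widetilde{Y}$, there is a neighborhood $W_x$ of $x$ in $\widetilde{Y}$ with $\alpha|_Y(W_x)\subset V_x$. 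Choose $V_x'$ open in $\tx$ with $V_x'\cap\widetilde{Y}=V_x$. Since $\alpha$ is uniform for $\ceo\xid$, Corollary \ref{caractcanon} provides a neighborhood $W_x'$ of $x$ in $\tx$ with $\alpha(W_x')\subset V_x'$. Setting $W_x=W_x'\cap\widetilde{Y}$, every $U\in\alpha$ meeting $W_x$ also meets $W_x'$ and hence sits inside $V_x'$, so $U\cap Y\subset V_x'\cap\widetilde{Y}=V_x$. This yields $\alpha|_Y(W_x)\subset V_x$, as required.

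Having established that $\alpha|_Y$ is an open uniform cover of $(Y,\ceo(X,Y,\widetilde{Y}))$, we transport it via the homeomorphism $j$ to an open uniform cover of $X\times(0,1]$ for the $C_0$ coarse structure attached to $X\times[0,1]$. Applying Lemma \ref{teodegrave} (with the same compact metric space $X$, since $X$ is a closed subset of the compact metrizable $\tx$) gives $\dim X\leq \mult(\alpha|_Y)-2\leq\mult\alpha-2$.

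The only delicate point is the verification of uniformity for $\alpha|_Y$, and this is handled cleanly by the ``open neighborhood in $\tx$, restrict to $\widetilde{Y}$'' trick above. Everything else is a matter of bookkeeping; no new asymptotic or dimensional machinery is required beyond Lemma \ref{teodegrave}.
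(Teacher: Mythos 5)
Your proof is correct and takes essentially the same approach as the paper: restrict $\alpha$ along the cylindrical embedding $j$ to get an open uniform cover of $X\times(0,1]$ for the model compactification pack, then invoke Lemma \ref{teodegrave}. The only cosmetic difference is that you verify uniformity of the restricted cover via the topological characterization in Corollary \ref{caractcanon}, whereas the paper pulls back the metric along $j$ and uses the $C_0$ criterion of Proposition \ref{recunif}; both arguments are sound and interchangeable here.
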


\begin{proof}
Consider the compactification pack
$(X\times\{0\},X\times(0,1],X\times[0,1])$ and suppose
$\ceop=\ceo(X\times\{0\},X\times(0,1],X\times[0,1])$.

Let $\dd$ be a metric on $\tx$ and let
$j:X\times[0,1]\rightarrow\tx$ be an embedding such that $j(x,0)=x$
for every $x\in X$. Consider on $X\times[0,1]$ the metric $\dd'$
given by $\dd'(a,b)=\dd(j(a),j(b))$.

Let $\beta=j|_{X\times(0,1]}^{-1}(\alpha)$. From the continuity of
$j$ we get that $\beta$ is an open cover over $X\times[0,1]$ and,
from the inyectivity of $j$, that $\mult\beta\leq\mult\alpha$.

\begin{figurapng}{.18}{fig4}{A part of a uniform cover $\alpha$ or $(\hx,\ceo)$,}
\centerline{the induced cover in $j(X\times(0,1])$ and
$\beta=j|_{X\times(0,1]}^{-1}(\alpha)$}\end{figurapng}

Let us see that $\beta$ is uniform for $\ceop$. Let $\epsil>0$.
Since $\alpha$ is uniform for $\ceo$, there exists a compact subset
$K$ of $\hx$ such that $\dd(x,y)<\epsil$ whenever $(x,y)\in
\Deelta(\alpha)\bs K\times K$.

Let $K'=j^{-1}(K)$. Observe that $K'\subset j^{-1}(\hx)=X\times
(0,1]$. Moreover, $K'$ is compact, because it is a closed subset of
$X\times[0,1]$. Finally, for every $(a,b)\in \Deelta(\beta)\bs
K'\times K'$, we have that $(j(a),j(b))\in\Deelta(\alpha)\bs K\times
K$ and, consecuently, $\dd'(a,b)=\dd(j(a),j(b))<\epsil$.

Since $\beta$ is an open and uniform cover of $(X\times
(0,1],\ceop)$, Lemma \ref{teodegrave} shows that $\dim X\leq
\mult\alpha-2$.\end{proof}

\begin{prop} Let $\xid$ be a cylindrical metrizable compactification
pack (in particular, if $\tx$ is $Q$ or $[0,1]^n$, with $n\in\Nset$,
and $X$ is a Z-set of $\tx$). Consider $(\hx,\ceo)$.

Then, the following properties are equivalent:
\begin{enumerate}[a) ]
\item $\dim X\leq n$.
\item $\asdim(\hx,\ceo)\leq n+1$.
\item For every uniform cover $\beta$, there exists a canonical cover $\alpha$
such that $\beta\prec\alpha$ and $\mult\alpha \leq n+2$.
\item For every canonical cover $\beta$, there exists a canonical cover $\alpha$
such that $\beta\prec\alpha$  and $\mult\alpha \leq n+2$.
\item There exists a canonical cover $\alpha$ with $\mult\alpha \leq n+2$.

\item There exists an open and uniform cover $\alpha$ of $\xid$ with $\mult\alpha \leq n+2$.
\end{enumerate}
\end{prop}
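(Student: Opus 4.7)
The plan is to add two new equivalent conditions to the chain already established in Theorem \ref{dimasdimcanonicos}. That theorem (valid for \emph{any} metrizable compactification pack) gives a)$\Leftrightarrow$b)$\Leftrightarrow$c)$\Leftrightarrow$d) without any cylindricality hypothesis, so the strategy is simply to close the loop through e) and f), producing the cycle d)$\Rightarrow$e)$\Rightarrow$f)$\Rightarrow$a). The cylindrical hypothesis will enter precisely in the last implication, and nowhere else.

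For d)$\Rightarrow$e): I first need to know that canonical covers of $\hx$ actually exist. This is guaranteed by Proposition \ref{muchoscanonicos} applied to the trivial uniform cover $\{\{x\}:x\in\hx\}$, which produces some canonical cover $\beta$. Feeding this $\beta$ into d) yields a canonical refinement $\alpha\succ\beta$ with $\mult\alpha\leq n+2$, which is exactly the content of e). The step e)$\Rightarrow$f) is essentially tautological: by Corollary \ref{caractcanon}, a canonical cover is, in particular, an open cover of $\hx$ that is controlled (equivalently, uniform) for $\ceo$, so any witness for e) is also a witness for f).

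The substantive implication is f)$\Rightarrow$a), and this is where cylindricality is indispensable. This is precisely Proposition \ref{lemareccanonicosenq}, which asserts that for any open uniform cover $\alpha$ of $(\hx,\ceo)$ in the cylindrical setting one has $\dim X\leq\mult\alpha-2$. Plugging in the cover $\alpha$ produced by f), which has $\mult\alpha\leq n+2$, gives $\dim X\leq n$, as required.

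The main obstacle was anticipated by the earlier example of the countable space $\hx$, which shows that without a cylindrical hypothesis a single canonical cover of small multiplicity carries no information about $\dim X$. That obstacle has already been resolved by Proposition \ref{lemareccanonicosenq}: its proof pulls back an arbitrary open uniform cover through the cylindrical embedding $j:X\times[0,1]\hookrightarrow\tx$ to an open uniform cover of $(X\times(0,1],\ceop)$ whose multiplicity does not increase, and then invokes Lemma \ref{teodegrave} (built on the product trick of Lemma \ref{mejoraproducto01}) to bound $\dim X$. Thus, no new technical work is required beyond citing these three prior results and assembling the cycle d)$\Rightarrow$e)$\Rightarrow$f)$\Rightarrow$a) on top of Theorem \ref{dimasdimcanonicos}.
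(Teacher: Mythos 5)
Your proposal is correct and follows essentially the same route as the paper: equivalences a)--d) are imported from Theorem \ref{dimasdimcanonicos}, the loop is closed via d)$\Rightarrow$e)$\Rightarrow$f)$\Rightarrow$a), and the cylindrical hypothesis is used only in the last step through Proposition \ref{lemareccanonicosenq}. The only difference is that you spell out why d)$\Rightarrow$e) (existence of canonical covers via Proposition \ref{muchoscanonicos}) and e)$\Rightarrow$f) (Corollary \ref{caractcanon}), which the paper dismisses as obvious.
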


\begin{proof}
The equivalences between a),b),c) and d) are given in Proposition
\ref{dimasdimcanonicos}. The implications d) $\Rightarrow$ e)
$\Rightarrow$ f) are obvious. f) implies a) because of Proposition
\ref{lemareccanonicosenq}.
\end{proof}

\subsection{Canonical covers on general spaces}

Let us start with the following lemma to get a corollary from
Proposition \ref{teodegrave}:

\begin{lemma}\label{lemaparticion} If $\alpha$ and
$\beta$ are families of subsets of a set $Z$ and there is a
surjective map $\phi:\alpha\twoheadrightarrow\beta$ such that
$U\supset \phi(U)$ for every $U\in\alpha$, then $\mult\beta\leq
\mult\alpha$.
\end{lemma}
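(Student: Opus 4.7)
The plan is to prove the inequality by lifting witnesses of multiplicity from $\beta$ up to $\alpha$ through a section of $\phi$. Concretely, I would show that any subfamily $B\subset\beta$ with nonempty common intersection can be matched with a subfamily $A\subset\alpha$ of the same cardinality whose common intersection is also nonempty; taking the supremum over such $B$ yields $\mult\beta\leq\mult\alpha$.

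First I would fix an arbitrary subfamily $B\subset\beta$ with $\bigcap_{V\in B}V\neq\varnothing$. Using the surjectivity of $\phi$, for each $V\in B$ I choose a single preimage $U_V\in\phi^{-1}(V)\subset\alpha$; since distinct elements of $B$ have disjoint preimages under $\phi$, the assignment $V\mapsto U_V$ is injective, so the set $A=\{U_V:V\in B\}\subset\alpha$ satisfies $\#A=\#B$.

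Next I would use the containment hypothesis: for every $V\in B$, $U_V\supset\phi(U_V)=V$, so
\[
\bigcap_{U\in A}U \;=\; \bigcap_{V\in B}U_V \;\supset\; \bigcap_{V\in B}V \;\neq\;\varnothing.
\]
Thus $A$ witnesses that $\mult\alpha\geq\#A=\#B$. Since $B$ was an arbitrary subfamily of $\beta$ with nonempty intersection, taking the supremum over all such $B$ gives $\mult\alpha\geq\mult\beta$, which is the desired inequality.

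There is really no main obstacle here; the only subtle point to handle cleanly is to make sure one selects the $U_V$ so that distinct $V\in B$ give distinct $U_V\in\alpha$ (which is automatic from the disjointness of the fibers of $\phi$), and to observe that the argument is valid even when $\mult\beta=\infty$, since in that case $\#B$ can be made arbitrarily large (or infinite) and the inequality $\mult\alpha\geq\#B$ persists after taking suprema.
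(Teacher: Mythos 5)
Your proof is correct and is essentially the same argument as in the paper: lift a subfamily $B\subset\beta$ with nonempty intersection to a subfamily of $\alpha$ via preimages under $\phi$, using surjectivity to control cardinality and the containment $U\supset\phi(U)$ to preserve nonempty intersection. The only cosmetic difference is that the paper works with the full preimage $\phi^{-1}(B)$ (observing $\#\phi^{-1}(B)\geq\#B$), while you select a single preimage per $V\in B$ to build a set of exactly the same cardinality; both variants are equivalent.
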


\begin{proof}
$\mult\beta=\sup\{\#B:B\subset\beta,\bigcap_{V\in
B}V\neq\varnothing\}\leq\sup\{\#\phi^{-1}(B):B\subset\beta,\bigcap_{U\in
\phi^{-1}(B)}U\neq\varnothing\}\leq\sup\{\#A:A\subset\alpha,\bigcap_{U\in
A}U\neq\varnothing\}=\mult\alpha$.
\end{proof}

\begin{cor}\label{opencoversmeaning} Let $\xid$ be a cylindrical metrizable compactification pack and
consider $(\hx,\ceo)$. Then,
\begin{enumerate}[a) ]
\item If $\alpha$ is a uniform cover $\hx$ which has an open refinement,
then $\mult\alpha\geq\asdim (\hx,\ceo)+1$.
\item If $E$ is an open and controlled neighborhood of the diagonal of $\hx\times\hx$, then
$\mult_E\alpha\geq \asdim (\hx,\ceo)+1$ for every uniform cover
$\alpha$ of $\hx$.
\end{enumerate}
\end{cor}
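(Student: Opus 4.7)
The plan is to reduce both assertions to Proposition \ref{lemareccanonicosenq}, which (combined with Grave's theorem) says that every open and uniform cover $\gamma$ of $(\hx,\ceo)$ satisfies $\mult\gamma \geq \dim X+2 = \asdim(\hx,\ceo)+1$. In each part I build, from the given data, an auxiliary open uniform cover whose multiplicity is bounded above by the quantity I want to estimate.

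For (a), let $\beta$ be an open refinement of $\alpha$. Fix a selection $\phi\colon\beta\to\alpha$ with $V\subset\phi(V)$ for every $V\in\beta$, and for each $U\in\alpha$ set
\[
W_U \;=\; \bigcup\{V\in\beta : \phi(V)=U\}.
\]
Let $\beta'=\{W_U:U\in\alpha\}$. Each $W_U$ is open (a union of open sets) and contained in $U$, so $\beta'$ refines $\alpha$; it covers $\hx$ because $\beta$ does. Since refinements of uniform families are uniform, $\beta'$ is a uniform cover. Proposition \ref{lemareccanonicosenq} then gives $\mult\beta' \geq \asdim(\hx,\ceo)+1$, while Lemma \ref{lemaparticion} applied to the surjection $U\mapsto W_U$ yields $\mult\beta' \leq \mult\alpha$, and (a) follows.

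For (b), note that $E^{-1}$ is again open (the flip is a homeomorphism of $\hx\times\hx$) and controlled. For every $y\in\hx$ the slice $(E^{-1})_y = \{x:(x,y)\in E^{-1}\}$ is the preimage of the open set $E^{-1}$ under the continuous map $x\mapsto(x,y)$, hence is open; therefore $E^{-1}(U)=\bigcup_{y\in U}(E^{-1})_y$ is open for every $U$, and the family $E^{-1}(\alpha)$ consists of open sets. Because $\Delta\subset E$ one has $U\subset E^{-1}(U)$, so $E^{-1}(\alpha)$ covers $\hx$, and it is uniform since controlled images of uniform families are uniform. Proposition \ref{lemareccanonicosenq} gives $\mult E^{-1}(\alpha)\geq \asdim(\hx,\ceo)+1$, and the second inequality of Lemma \ref{cambiomult}, applied with $E$ replaced by $E^{-1}$, gives $\mult E^{-1}(\alpha)\leq \mult_E\alpha$, which proves (b).

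The only delicate points are (i) in (a), checking that the bundled family $\beta'$ is simultaneously open, a cover, and a refinement of $\alpha$, so that both Proposition \ref{lemareccanonicosenq} and Lemma \ref{lemaparticion} apply; and (ii) in (b), using openness of $E$ to ensure the slices $(E^{-1})_y$ are open, which is what lets us feed $E^{-1}(\alpha)$ into Proposition \ref{lemareccanonicosenq}. Everything else is a direct unfolding of the tools already assembled above.
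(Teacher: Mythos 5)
Your proof is correct and follows the paper's strategy in both parts: construct an auxiliary open, uniform cover of $(\hx,\ceo)$, invoke Proposition \ref{lemareccanonicosenq} together with Grave's theorem, and transfer the multiplicity bound back to the given data via Lemma \ref{lemaparticion} (for a)) or Lemma \ref{cambiomult} (for b)). One small remark: in b) your choice of $E^{-1}(\alpha)$ rather than $E(\alpha)$ is the cleaner way to instantiate Lemma \ref{cambiomult}, since it directly yields $\mult E^{-1}(\alpha)\leq \mult_{(E^{-1})^{-1}}\alpha=\mult_E\alpha$, whereas $E(\alpha)$ only gives $\mult E(\alpha)\leq \mult_{E^{-1}}\alpha$, which coincides with $\mult_E\alpha$ when $E$ is symmetric.
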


\begin{proof}
To prove a), take an open refinement $\beta$ of $\alpha$. Let
$\calt$ be the topology of $\hx$ and consider the map
$\phi:\alpha\rightarrow \calt$ given by
$\phi(V)=\bigcup_{\substack{U\in\beta\\U\subset V}}U$. Observe that
$U\supset\phi(U)$ for each $U$. Let $\gamma=\phi(\alpha)$. By Lemma
\ref{lemaparticion}, $\mult\gamma\leq\mult\alpha$.

Since $\gamma\prec\alpha$, we have that $\gamma$ is uniform.
Moreover,
$$\bigcup_{W\in\gamma}W=\bigcup_{V\in\alpha}\phi(V)=\bigcup_{V\in\alpha}\bigcup_{\substack{U\in\beta\\U\subset
V}}U=\bigcup_{U\in\beta}U=\hx$$ Since $\gamma$ is an open and
uniform cover of $\hx$, Proposition \ref{teodegrave} and Grave's
theorem show that $\mult\alpha\geq\mult \gamma\geq\dim X+2=\asdim
(\hx,\ceo)+1$.

Now, let us see b). $E(\alpha)$ is uniform because $E$ is controlled
and $\alpha$, uniform. By (\ref{usefulidentities6}),
$E(\alpha)=\{E(V):V\in\alpha\}=\{\bigcup_{x\in V}E_x:V\in\alpha\}$.
Since each $E_x$ is open, we get that $E(\alpha)$ is open and, since
$\alpha\prec E(\alpha)$, that $E(\alpha)$ is a cover of $\hx$.

From Lemmas \ref{cambiomult} and \ref{lemareccanonicosenq} and
Grave's theorem, we get $\mult_E\alpha\geq\mult E(\alpha)\geq\dim
X+2\geq\asdim (\hx,\ceo)+1$. \end{proof}

Let $\xid$ be a cylindrical and metrizable compactification pack and
consider $(\hx,\ceo)$. Corollary \ref{opencoversmeaning} means that,
from the asymptotic dimensional point of view, an open and uniform
cover $\alpha$ or a controlled and open neighborhood of the diagonal
are big enough.

This observation suggests a question. In the general case when
$\xid$ is not necessary cylindrical, when is a cover $\alpha$ or a
controlled set $E$ big enough from the asymptotic dimensional point
of view? The following results will answer this question.

\begin{prop}\label{propsh} Let $\xid$ be a metrizable compactification pack and
suppose that $\dd$ is a metric on $\tx$. If $k=\sup_{x\in
\tx}\dd(x,X)$, then the map $h:(0,k]\rightarrow (0,k]$,
$t\rightarrow\sup_{x\in X}\dd(x,\tx\bs\BB(X,t))$ is increasing and
satisfies $\lim_{t\rightarrow 0}h(t)=0$.\end{prop}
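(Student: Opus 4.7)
The plan is to treat the two claims separately. The monotonicity is immediate: if $0 < t_1 \leq t_2 \leq k$, then $\BB(X,t_1) \subset \BB(X,t_2)$, so $\tx \bs \BB(X,t_2) \subset \tx \bs \BB(X,t_1)$, and hence $\dd(x, \tx \bs \BB(X,t_1)) \leq \dd(x, \tx \bs \BB(X,t_2))$ for every $x \in X$; taking the supremum over $x \in X$ yields $h(t_1) \leq h(t_2)$.

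For the vanishing claim $\lim_{t\rightarrow 0} h(t) = 0$, I would argue by contradiction. Suppose that there exist $\epsil > 0$ together with sequences $t_n \to 0$ in $(0,k]$ and points $x_n \in X$ satisfying $\dd(x_n, \tx \bs \BB(X,t_n)) \geq \epsil$. This last inequality is equivalent to the inclusion $\BB(x_n,\epsil) \subset \BB(X,t_n)$: every point within distance $\epsil$ of $x_n$ lies within distance $t_n$ of $X$.

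Since $X$ is closed in the compact space $\tx$, it is compact, so after passing to a subsequence we may assume $x_n \to x_\infty \in X$. For $n$ large enough, $\dd(x_n,x_\infty) < \epsil/2$, and hence $\BB(x_\infty,\epsil/2) \subset \BB(x_n,\epsil) \subset \BB(X,t_n)$. Then for any fixed $y \in \BB(x_\infty,\epsil/2)$, we have $\dd(y,X) < t_n$ for every sufficiently large $n$, so $\dd(y,X) = 0$, and since $X$ is closed, $y \in X$. Thus $\BB(x_\infty,\epsil/2) \subset X$, contradicting the nowhere-denseness of $X$. The crux of the argument is precisely this transfer from the moving balls $\BB(x_n,\epsil)$ to a single fixed ball $\BB(x_\infty,\epsil/2)$ around the limit point, which is exactly what the compactness of $X$ supplies.
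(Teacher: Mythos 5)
Your proof is correct, and both halves use the same hypotheses (compactness of $X$ and nowhere-denseness of $X$ in $\tx$), but the argument for $\lim_{t\to 0}h(t)=0$ takes a genuinely different route from the paper's. The paper gives a direct, quantitative argument: it covers the compact $X$ by finitely many balls $\BB(x_j,\epsil/2)$, uses nowhere-denseness to pick points $y_j\in\hx\cap\BB(x_j,\epsil/2)$, and then sets $\delta=\min_j\dd(y_j,X)$; for $t<\delta$ each $y_j$ lies in $\tx\bs\BB(X,t)$, which forces $h(t)\leq\epsil$. This produces an explicit threshold $\delta$. You instead argue by contradiction via sequential compactness: extract $x_n\to x_\infty\in X$ with $\dd(x_n,\tx\bs\BB(X,t_n))\geq\epsil$, stabilize the ball to $\BB(x_\infty,\epsil/2)$, and use $t_n\to 0$ plus closedness of $X$ to conclude $\BB(x_\infty,\epsil/2)\subset X$, contradicting empty interior. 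Both approaches exploit nowhere-denseness, just at different places (the paper to find points of $\hx$ near $X$, you to obtain the final contradiction). One small point worth making explicit: in passing from the failure of the limit to the existence of the points $x_n$, you tacitly use that the supremum defining $h(t_n)$ is attained (which holds since $X$ is compact and $x\mapsto\dd(x,\tx\bs\BB(X,t_n))$ is $1$-Lipschitz), or alternatively you should take $\dd(x_n,\tx\bs\BB(X,t_n))>\epsil/2$ rather than $\geq\epsil$. Either fix is immediate and does not affect the argument.
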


\begin{proof}
If $t\leq t'$, then $\tx\bs \BB(X,t)\supset\tx\bs\BB(X,t')$ and, for
every $x\in X$, $\dd(x,\tx\bs\BB(X,t))\leq \dd(x,\tx\bs\BB(X,t'))$.
Taking supreme over $x$, we get  $h(t)\leq h(t')$.

Fix $\epsil>0$. Since $X$ is compact and $X\subset\bigcup_{x\in
X}\BB\left(X,\frac{\epsil}{2}\right)$, there exists
$x_1,\dots,x_r\in X$ such that
$X\subset\bigcup_{j=1}^r\BB\left(x_j,\frac{\epsil}{2}\right)$. For
every $j$, pick a point $y_j\in
\hx\cap\BB\left(x_j,\frac{\epsil}{2}\right)$. Let
$\delta=\min_{1\leq j\leq r}\dd(y_j,X)$.

Fix $t< \delta$. For every $x\in X$, there is $j=1,\dots,r$ such
that $x\in\BB\left(x_j,\frac{\epsil}{2}\right)$. Observe that
$y_j\in \tx\bs\BB(X,t)$ and thus, $\dd(x,\tx\bs\BB(X,t))\leq
\dd(x,y_j)\leq\dd(x,x_j)+\dd(x_j,y_j)<\frac{\epsil}{2}+\frac{\epsil}{2}=\epsil$.
Taking supreme over $x$, we get $h(t)\leq\epsil$.\end{proof}

The following proposition implies Corollary
\ref{variasvecesdescubierto}. This corollary has been proved
independently by Grave in \cite{grv} or \cite{grv2}, by us in
\cite{moreno} and \cite{moreno2} and by Mine and Yamashita in
\cite{miya}. We add this proposition here because we need the
explicit functions used there. Moreover, we get an easy proof of
Corollary \ref{variasvecesdescubierto}.

\begin{prop}\label{fgequivcoarse} Let $\xid$ be a metrizable compactification pack.
Consider the compactification pack
$(X\times\{0\},X\times(0,1],X\times[0,1])$ and the coarse structures
$\ceo=\ceo\xid$ and
$\ceop=\ceo(X\times\{0\},X\times(0,1],X\times[0,1])$. Let $\dd_0$ be
a metric on $\tx$. Consider the metric $\dd=\frac{1}{k}\dd_0$ on
$\tx$, where $k=\sup_{x\in\tx}\dd_0(x,X)$.

Then, there exist an $f:(\hx,\ceo)\rightarrow(X\times(0,1],\ceop)$
and a $g:(X\times(0,1],\ceop)\rightarrow(\hx,\ceo)$ satisfying
\begin{itemize}
\item For every $x\in
\hx$ $f(x)=(z,t)$, with $t=\dd(x,X)$, $z\in X$ and $\dd(x,z)=t$.

\item For every $(z,t)\in X\times(0,1]$, $g(z,t)=y$ with $y\in \tx\bs\BB(X,t)$ and
$\dd(y,z)=\dd(z,\tx\bs\BB(X,t))$.\end{itemize} in which case, they
are coarse equivalences, the one inverse of the other.
\end{prop}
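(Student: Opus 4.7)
My plan is to apply Proposition \ref{criterio}(b) to the pair $(f,g)$, which will simultaneously yield that both maps are coarse equivalences and that each is a coarse inverse of the other. Before anything, I would check that $f$ and $g$ are well defined: the nearest point $z\in X$ to $x$ is attained because $X$ is compact, and the point $y\in\tx\bs\BB(X,t)$ realising $\dd(z,\tx\bs\BB(X,t))$ is attained because, under the normalisation $\dd=\dd_0/k$, the set $\tx\bs\BB(X,t)$ is a nonempty closed subset of the compact space $\tx$ for every $t\in(0,1]$. The two estimates that drive the whole argument are
\[
\dd(x,z)=t=\dd(x,X)\qquad\text{and}\qquad \dd(y,z)\leq h(t),
\]
where $h$ is the increasing function from Proposition \ref{propsh} satisfying $h(t)\to 0$ as $t\to 0$.

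Conditions (iii) and (iv) of Proposition \ref{criterio}(b), namely $G=\{(x,g\circ f(x)):x\in\hx\}\in\ceo$ and $H=\{(y,f\circ g(y)):y\in X\times(0,1]\}\in\ceop$, would follow directly from the two estimates together with Proposition \ref{caractce}. Indeed $\dd(x,g(f(x)))\leq\dd(x,z)+\dd(z,y)\leq t+h(t)$, so given a boundary point $x_0\in X$ and a neighborhood $V_{x_0}$, I take $W_{x_0}=\BB(x_0,r)$ with $r$ small enough that $\BB(x_0,2r+h(r))\subset V_{x_0}$; this works in both directions because $\dd(g(f(x)),X)\geq \dd(x,X)$ already forces $t<r$ as soon as $g(f(x))\in\BB(x_0,r)$. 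Condition (iv) is analogous: writing $f(g(z,t))=(z',t')$, one finds $t\leq t'\leq h(t)$ and $\dd(z,z')\leq 2h(t)$, so a neighborhood $\BB(x_0,r)\times[0,r)$ in $X\times[0,1]$ maps into $\BB(x_0,r+2h(r))\times[0,h(r)]$, which shrinks to $(x_0,0)$ as $r\to 0$.

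For condition (i), $f\times f(E)\in\ceop$ for every $E\in\ceo$, I would fix $(x_0,0)\in X\times\{0\}$ together with a target neighborhood $\BB(x_0,\epsil)\times[0,\epsil)$, apply Proposition \ref{caractce} to $E$ to obtain $r>0$ with $(E\cup E^{-1})(\BB(x_0,r)\cap\hx)\subset\BB(x_0,\epsil/2)$, and then set $\delta<\min\{r/2,\epsil/2\}$. If $f(x_1)=(z_1,t_1)$ lies in $\BB(x_0,\delta)\times[0,\delta)$, then $\dd(x_1,z_1)=t_1<\delta$ forces $x_1\in\BB(x_0,2\delta)\subset\BB(x_0,r)$; hence any $x_2$ with $(x_1,x_2)\in E\cup E^{-1}$ lies in $\BB(x_0,\epsil/2)$ and $f(x_2)\in\BB(x_0,\epsil)\times[0,\epsil/2)$. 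Condition (ii) is symmetric: if $g(z_1,t_1)\in\BB(x_0,r)$, then $t_1\leq\dd(g(z_1,t_1),X)<r$ and $\dd(z_1,x_0)<r+h(r)$, placing $(z_1,t_1)$ in an arbitrarily small neighborhood of $(x_0,0)$; I then apply Proposition \ref{caractce} to $F\in\ceop$ and close with the bound $\dd(g(z_2,t_2),x_0)\leq h(t_2)+\dd(z_2,x_0)$.

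The main bookkeeping difficulty is threading three length scales (the target, the intermediate scale produced by $E$ or $F$, and the source) through the function $h$, but since every compound bound $r+h(r)$, $2r+h(r)$, $2h(r)+r$ still tends to $0$ with $r$, the required neighborhoods always exist. I anticipate no conceptual obstacle: the proof is essentially a quantitative translation of the geometric picture that $f$ moves $x$ by exactly $\dd(x,X)$ while $g$ moves $(z,t)$ by at most $h(t)$.
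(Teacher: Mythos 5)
Your proposal is correct, but it follows a genuinely different route from the paper's. The paper verifies condition~c) of Proposition~\ref{criterio}, which requires checking only three things: $f\times f(E)\in\ceop$, $(f\times f)^{-1}(F)\in\ceo$, and $H=\{((z,t),f\circ g(z,t))\}\in\ceop$, and does so via the $\epsil$--$K$ definition of the $C_0$ coarse structure (choosing compact cutoffs $K$ and estimating distances directly). You instead invoke Proposition~\ref{criterio}~b), which requires the additional verification $G=\{(x,g\circ f(x))\}\in\ceo$, and you control each of the four conditions via the neighborhood characterization of Proposition~\ref{caractce}. What the paper's choice buys is one fewer condition to verify (criterion~c) bypasses $G$ entirely by deducing it internally); what your choice buys is a more uniform treatment --- the displacement bounds $\dd(x,g(f(x)))\leq t+h(t)$, $\dd(z,z')\leq 2h(t)$, $|t-t'|\leq h(t)-t$, and the near-isometry estimates for $f\times f$ and $g\times g$ all reduce to the same scheme of picking $r$ so that $r$, $h(r)$, and their combinations are small, so the four verifications are visibly siblings of each other. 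Your key observations --- that $\dd(x,z)=t=\dd(x,X)$, that $\dd(y,z)\leq h(t)$ with $h$ increasing and $h(t)\to 0$ by Proposition~\ref{propsh}, and that $t\leq t'=\dd(g(z,t),X)\leq h(t)$ --- are exactly the geometric content of the paper's inequalities, just packaged through $h$ rather than through explicit $\epsil/5$- and $\epsil/3$-splits. Both arguments are of comparable length and difficulty, and both correctly yield that $f$ and $g$ are coarse equivalences and mutual coarse inverses.
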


\begin{proof} Since $\sup_{x\in\tx}\dd(x,X)=1$, such $f$ and
$g$ do exist. To check their coarse equivalentness, it suffices to
show that they satisfice property c) of Proposition \ref{criterio}.
Consider on $X\times [0,1]$ the metric
$\dd'((x,t),(z,s))=\dd(x,z)+|t-s|$.

Fix $E\in\ceo$ and let us see that $f\times f(E)\in\ceop$. Let
$\epsil>0$ and suppose that $K$ is a compact subset of $\hx$ such
that $\dd(x,x')<\frac{\epsil}{5}$ whenever $(x,x')\in E\bs K\times
K$.

Let $\delta=\min\{\frac{\epsil}{5},\dd(K,X)\}$ and consider
$K'=X\times[\delta,1]$. Pick $((z,t),(z',t'))\in f\times f(E)\bs
K'\times K'$ and take $(x,x')\in E$ such that $f(x)=(z,t)$ and
$f(x')=(z',t')$. Suppose, without loss of generality, that $t\leq
t'$.

Neither $(z,t)\in K'$ nor $(z',t')\in K'$, that is, either
$t<\delta$ or $t'<\delta$. Then, $\dd(x,X)=t<\delta\leq\dd(K,X)$ and
thus, $x\not\in K$. Hence $(x,x')\in E\bs K\times K$ and
$\dd(x,x')<\frac{\epsil}{5}$. Moreover,
$t'=\dd(x',X)\leq\dd(x',z)\leq\dd(x',x)+\dd(x,z)=\dd(x',x)+t<\dd(x',x)+\delta$.
Therefore,
$$\xymatrix@R=.1mm{
\dd'((x,t),(z,s))=\dd(z,z')+|t-t'|\leq\\
\dd(z,x)+\dd(x,x')+\dd(x',z')+t'-t=\\
t+\dd(x,x')+t'+t'-t=\dd(x,x')+2t'\leq\\
3\dd(x,x')+2\delta<3\frac{\epsil}{5}+2\frac{\epsil}{5}=\epsil}$$ and
we get $f\times f(E)\in\ceop$.

Fix $F\in\ceop$ and let us see that $(f\times f)^{-1}(F)\in\ceo$.
Let $\epsil>0$ and suppose that $K'$ is a compact subset of $X\times
(0,1]$ such that $\dd'((z,t),(z',t'))<\frac{\epsil}{3}$ whenever
$((z,t),(z',t'))\in F\bs K'\times K'$. Take $\delta_0$ such that
$K'\subset X\times[\delta_0,1]$. Put $\delta=\min
\{\delta_0,\frac{\epsil}{3}\}$ and put
$K=\tx\bs\BB\left(X,\delta\right)$. Pick $(x,x')\in (f\times
f)^{-1}(F)\bs K\times K$ and take $((z,t),(z',t'))\in F$ such that
$f(x)=(z,t)$ and $f(x')=(z',t')$. Suppose, without loss of
generality, that $t\leq t'$.

Neither $x\in K$ nor $x'\in K$, that is, either $t=\dd(x,X)<\delta$
or $t'=\dd(x',X)<\delta$. Then, $t<\delta\leq\delta_0$ and thus
$(z,t)\not\in K'$. Hence, $((z,t),(z',t'))\in E\bs K'\times K'$ and
$\dd'((z,t),(z',t'))<\frac{\epsil}{3}$. Therefore,
$$\xymatrix@R=.1mm{\dd(x,x')\leq
\dd(x,z)+\dd(z,z')+\dd(z',x')=\\
t+\dd(z,z')+t'=2t+\dd(z,z')+|t-t'|<\\
2\delta+\dd'((z,t),(z',t'))<\frac{2\epsil}{3}+\frac{\epsil}{3}=\epsil}$$
and we get $(f\times f)^{-1}(F)\in\ceo$.

Let $G=\{((z,t),f\circ g(z,t)):(z,t)\in X\times (0,1]\}$ and let us
see that $G\in\ceop$. Fix $\epsil>0$. Consider the function
$h:(0,1]\rightarrow(0,1]$, $t\rightarrow\sup_{x\in
X}\dd(x,\tx\bs\BB(X,t))$. By Proposition \ref{propsh},
$\lim_{t\rightarrow 0}h(t)=0$, so there exists $\delta>0$ such that
$h(t)<\frac{\epsil}{3}$ when $t<\delta$.

Let $K=X\times[\delta,1]$ and pick $((x,t),(z,s))\in G\bs K\times
K$. Then, either $t<\delta$ or $s<\delta$. Observe that
$(z,s)=f\circ g(x,t)$. Put $y=g(x,t)$, so we have $(z,s)=f(y)$.
Since $y\in\tx\bs\BB(X,t)$, it follows that $s=\dd(y,X)\geq t$ and
thus, $t<\delta$. Therefore,

$$\xymatrix@R=.1mm{\dd'((x,t),(z,s))=\dd(x,z)+|t-s|\leq\\
\dd(x,y)+\dd(y,z)+s-t=\dd(x,y)+2\dd(y,z)-t<\\
\dd(x,y)+2\dd(y,X)\leq 3\dd(x,y)=\\
3\dd(x,\tx\bs\BB(X,t))\leq 3h(t)<3\frac{\epsil}{3}=\epsil}$$
and we get $G\in\ceop$. Therefore, $f$ is a coarse equivalence and
$g$ is its coarse inverse.
\end{proof}

\begin{cor}\label{variasvecesdescubierto} If $(X_1,\hx_1,\tx_1)$ and $(X_2,\hx_2,\tx_2)$ are two metrizable
compactification packs such that $X_1$ and $X_2$ are homeomorphic,
then $(\hx_1,\mathcal{E}_0^1)$ and $(\hx_2,\mathcal{E}_0^2)$ are
coarse equivalent where, for $i=1,2$,
$\mathcal{E}_0^i=\ceo(X_i,\hx_i,\tx_i)$.\end{cor}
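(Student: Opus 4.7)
The plan is to reduce to the ``standard model'' compactification pack $(X\times\{0\},X\times(0,1],X\times[0,1])$ supplied by Proposition \ref{fgequivcoarse}, and then transport the homeomorphism $X_1\cong X_2$ through this model.

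First, I apply Proposition \ref{fgequivcoarse} to each compactification pack $\xid_i=(X_i,\hx_i,\tx_i)$, $i=1,2$. This yields a coarse equivalence
$$f_i:(\hx_i,\mathcal{E}_0^i)\longrightarrow (X_i\times(0,1],\mathcal{E}_0'^i),$$
where $\mathcal{E}_0'^i$ denotes $\ceo(X_i\times\{0\},X_i\times(0,1],X_i\times[0,1])$. Thus it suffices to produce a coarse equivalence between $(X_1\times(0,1],\mathcal{E}_0'^1)$ and $(X_2\times(0,1],\mathcal{E}_0'^2)$; the desired coarse equivalence between $(\hx_1,\mathcal{E}_0^1)$ and $(\hx_2,\mathcal{E}_0^2)$ is then obtained by composition with the $f_i$'s and a coarse inverse of $f_2$.

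Second, given a homeomorphism $\phi:X_1\to X_2$, I extend it to the ``product'' homeomorphism $\Phi:X_1\times[0,1]\to X_2\times[0,1]$ defined by $\Phi(x,t)=(\phi(x),t)$. By construction $\Phi$ is a homeomorphism of compact Hausdorff spaces sending $X_1\times\{0\}$ onto $X_2\times\{0\}$, so its restriction $\Phi_0=\Phi|_{X_1\times(0,1]}$ is a bijection $X_1\times(0,1]\to X_2\times(0,1]$ that extends continuously to a homeomorphism of the compactifications mapping corona onto corona. I claim $\Phi_0$ is a coarse equivalence for the topological coarse structures $\mathcal{E}_0'^1$ and $\mathcal{E}_0'^2$: using the intrinsic description given in Proposition \ref{caractce}, a set $E\subset (X_1\times(0,1])^2$ belongs to $\mathcal{E}_0'^1$ iff for every $p\in X_1\times\{0\}$ and every neighborhood $V_p$ of $p$ in $X_1\times[0,1]$ there is a neighborhood $W_p$ with $(E\cup E^{-1})(W_p)\subset V_p$. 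Since $\Phi$ is a homeomorphism sending neighborhoods of $p$ in $X_1\times[0,1]$ to neighborhoods of $\Phi(p)$ in $X_2\times[0,1]$, this property is transported verbatim between $E$ and $\Phi\times\Phi(E)$. Hence $\Phi\times\Phi$ and $\Phi^{-1}\times\Phi^{-1}$ preserve controlled sets, so $\Phi_0$ (which is bijective and hence automatically satisfies the boundedness conditions) is a coarse equivalence.

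Finally, composing yields the coarse equivalence
$$\hx_1\xrightarrow{\ f_1\ }X_1\times(0,1]\xrightarrow{\ \Phi_0\ }X_2\times(0,1]\xrightarrow{\ g_2\ }\hx_2,$$
where $g_2$ is the coarse inverse of $f_2$ furnished by Proposition \ref{fgequivcoarse}. Since compositions of coarse equivalences are coarse equivalences, this proves the corollary. The only nontrivial step is the claim that $\Phi_0$ is a coarse equivalence, but as outlined this reduces to an immediate transport under a homeomorphism of compactifications via Proposition \ref{caractce}; so I do not expect any real obstacle.
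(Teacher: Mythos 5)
Your proof is correct and follows essentially the same route as the paper: reduce both packs to the product model $(X_i\times\{0\},X_i\times(0,1],X_i\times[0,1])$ via Proposition~\ref{fgequivcoarse}, transport the homeomorphism $X_1\cong X_2$ to the product map $(x,t)\mapsto(\phi(x),t)$, and compose. The only cosmetic difference is in how you verify that the product map is a coarse equivalence: the paper pulls back a metric so the map becomes an isometry and then invokes property~d) of Proposition~\ref{criterio}, whereas you appeal directly to the intrinsic topological characterization of the topological coarse structure (Definition~\ref{defcontcoarest}/Proposition~\ref{caractce}) together with surjectivity for coarse density; both are valid and equally short.
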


\begin{proof} For $i=1,2$, consider the compactification pack
$(X_i\times\{0\},X_i\times(0,1],X_i\times[0,1])$ and suppose
$\mathcal{E}_0^{\prime
i}=\ceo(X_i\times\{0\},X_i\times(0,1],X_i\times[0,1])$.

Let $h:X_1\rightarrow X_2$ be a homeomorphism and let $\dd$ be a
metric on $X_1$. Consider on $X_1\times [0,1]$ and $X_2\times[0,1]$
the metrics $\dd_1$ and $\dd_2$ respectively, given by:
$$\xymatrix@R=.4mm{\dd_1((x,t),(y,s))=\dd(x,y)+|t-s|\\
\dd_2((x,t),(y,s))=\dd(h^{-1}(x),h^{-1}(y))+|t-s|}$$

Using $\dd_1$ and $\dd_2$, it is easy to check that the map
$h':X_1\times(0,1]\rightarrow X_2\times(0,1]$, $(x,t)\rightarrow
(h(x),t)$ satisfies property d) of Proposition \ref{criterio} and
hence, $h'$ is a coarse equivalence. Finally, from Proposition
\ref{fgequivcoarse}, we get
$$(\hx_1,\mathcal{E}_0^1)\approx(X_1\times(0,1],\mathcal{E}_0^{\prime
1})\approx(X_2\times(0,1],\mathcal{E}_0^{\prime
2})\approx(\hx_2,\mathcal{E}_0^2)$$\end{proof}

\begin{prop}\label{caractc0} Let $\xid$ be a compactification pack, let $\dd$ be a metric on $\tx$, let $E\subset\hx\times\hx$ and let $k=\sup_{x\in\tx}\dd(x,X)$. Then,
$E\in\ceo$ if and only if there exists
$\phi:(0,k]\rightarrow\Rset^+$ with $\lim_{t\rightarrow 0}\phi(t)=0$
such that
$$E\subset\{(x,y)\in
X:\dd(x,y)<\phi(\min\{\dd(x,X),\dd(y,X)\})\}$$
\end{prop}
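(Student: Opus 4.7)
My plan is to verify the equivalence by matching the two $C_0$-type decay conditions through the standard dictionary between compact subsets of $\hx$ and positive distances to $X$: if $K\subset\hx$ is compact, then $K$ is disjoint from the closed set $X$ in the compact $\tx$, so $\delta:=\dd(K,X)>0$ and $K\subset\{z\in\hx:\dd(z,X)\geq\delta\}$; conversely, for any $\delta>0$ the set $\{z\in\hx:\dd(z,X)\geq\delta\}$ is compact as a closed subset of $\tx$. I will use this dictionary on both directions.

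For the sufficiency, I assume such a $\phi$ exists. Given $\epsil>0$, I pick $\delta>0$ with $\phi(t)<\epsil$ whenever $t<\delta$, and set $K=\{z\in\hx:\dd(z,X)\geq\delta\}$. For any $(x,y)\in E\setminus K\times K$, at least one of $x,y$ lies outside $K$, so $\min\{\dd(x,X),\dd(y,X)\}<\delta$, hence $\dd(x,y)<\phi(\min\{\dd(x,X),\dd(y,X)\})<\epsil$, proving $E\in\ceo$.

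For the necessity, I define
$$\phi(t)=t+\sup\bigl\{\dd(x,y):(x,y)\in E,\ \min\{\dd(x,X),\dd(y,X)\}\leq t\bigr\}.$$
The supremum is finite since $\dd$ is bounded on the compact $\tx$. Adding $t$ accomplishes two things at once: it forces $\phi(t)>0$, so $\phi$ really lands in $\Rset^+$, and it upgrades the obvious weak inequality (any $(x,y)\in E$ with $t_0=\min\{\dd(x,X),\dd(y,X)\}$ is itself in the set defining the supremum, so $\dd(x,y)\leq\phi(t_0)-t_0$) into the strict inequality $\dd(x,y)<\phi(t_0)$ required by the statement. The containment is then immediate from the definition. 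To verify $\lim_{t\to 0}\phi(t)=0$, I fix $\epsil>0$, use $E\in\ceo$ to produce a compact $K\subset\hx$ with $\dd(x,y)<\epsil$ for all $(x,y)\in E\setminus K\times K$, set $\delta=\dd(K,X)>0$, and observe that for any $t<\min\{\delta,\epsil\}$, any pair $(x,y)\in E$ with $\min\{\dd(x,X),\dd(y,X)\}\leq t<\delta$ has at least one endpoint outside $K$, hence $\dd(x,y)<\epsil$, so $\phi(t)\leq 2\epsil$.

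The only subtle point, and the place I expect will need a moment of care, is producing the strict inequality together with the positivity of $\phi$ simultaneously; both are cleanly resolved by the $+t$ summand. Everything else is routine translation through the compact-versus-threshold dictionary recorded at the start.
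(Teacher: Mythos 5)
Your proof is correct and follows essentially the same route as the paper's: the sufficiency direction uses the identical compact set $K=\{z\in\hx:\dd(z,X)\geq\delta\}$, and for necessity you define the same function $\phi(t)=t+\sup\{\dd(x,y):(x,y)\in E,\ \min\{\dd(x,X),\dd(y,X)\}\leq t\}$ (the paper phrases this via $E\setminus K_t\times K_t$ with $K_t=\tx\setminus\overline{B}(X,t)$, which is the same set) and verifies $\phi(t)\to 0$ by the same compact-set argument. You even state the implication "$(x,y)\notin K\times K\Rightarrow$ at least one endpoint outside $K$" more accurately than the paper's slightly loose "neither $x\in K$ nor $y\in K$".
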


\begin{proof}
Suppose such $\phi$ exists. Fix $\epsil>0$ and take $\delta>0$ such
that $\phi(t)<\epsil$ for every $t<\delta$. Let $K=\tx\bs
\BB(X,\delta)$ and pick $(x,y)\in E\bs K\times K$. Then, neither
$x\in K$ nor $y\in K$. In any case,
$\min\{\dd(x,X),\dd(y,X)\}<\delta$. Hence,
$\dd(x,y)\leq\phi(\min\{\dd(x,X),\dd(y,X)\})<\epsil$ and $E\in\ceo$.

Now, assume that $E\in\ceo$. For every $t\in (0,k]$, let $K_t=\tx\bs
\BBC(X,t)$ and let $\phi(t)=t+\sup\{\dd(x,y):(x,y)\in E\bs K_t\times
K_t\}$ (if $E\bs K_t\times K_t$ is empty, put $\phi(t)=t)$.  Let us
see that $\lim_{t\rightarrow 0}\phi(t)= 0$. Fix $\epsil>0$, consider
a compact subset $K$ of $\hx$ such that $\dd(x,y)<\frac{\epsil}{2}$
whenever $(x,y)\in E\bs K\times K$ and set $\delta=\dd(x,K)$. Fix
$t<\delta$ and pick $(x,y)\in E\bs K_t\times K_t$. Since $K\subset
K_t$ and hence $(x,y)\in E\bs K\times K$, we have that
$\dd(x,y)<\frac{\epsil}{2}$, $t<\frac{\epsil}{2}$ and
$\phi(t)<\epsil$.

Pick a point $(x,y)\in E$ and put $t_0=\min\{\dd(x,X),\dd(y,Y)\}$.
Since $(x,y)\in E\bs K_{t_0}\times K_{t_0}$, we have that $\dd(x,y)<
\phi(t_0)=\phi(\min\{\dd(x,X),\dd(y,X)\})$.
\end{proof}

\begin{prop}\label{caractonbh} Let $\xid$ be a compactification pack,
let $\dd$ be a metric on $\tx$, let $E\subset\hx\times\hx$ and let
$k=\sup_{x\in\tx}\dd(x,X)$. Then, $E$ is a neighborhood of the
diagonal if and only if there exists an increasing function
$\lambda:(0,k]\rightarrow\Rset^+$ such that
$$\{(x,y)\in
X:\dd(x,y)<\lambda(\min\{\dd(x,X),\dd(y,X)\})\}\subset E$$
\end{prop}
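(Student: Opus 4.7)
The plan is to mimic Proposition \ref{caractc0}, with the monotonicity of $\lambda$ taking the role of the vanishing of $\phi$. Throughout I would write $K_t=\tx\bs\BB(X,t)$, a compact subset of $\hx$ for every $t\in(0,k]$ since $\BB(X,t)\supset X$ and $\tx$ is compact.

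For the sufficiency direction, given an increasing $\lambda$ satisfying the containment, I would fix $z\in\hx$, set $t_z=\dd(z,X)/2>0$ and $\delta_z=\min\{t_z,\lambda(t_z)/2\}>0$, and check that the product ball $\BB(z,\delta_z)\times\BB(z,\delta_z)$ lies inside $E$. The triangle inequality forces $\dd(x,X),\dd(y,X)>t_z$ on that ball, so monotonicity gives $\lambda(\min\{\dd(x,X),\dd(y,X)\})\geq\lambda(t_z)\geq 2\delta_z>\dd(x,y)$ and hence $(x,y)\in E$. Taking the union of these open product balls over $z\in\hx$ produces the desired open neighborhood of the diagonal inside $E$.

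For the necessity direction I would define
$$\lambda(t)=\dd\bigl(\{(x,x):x\in K_t\},\,(K_t\times K_t)\bs E\bigr),$$
measured with the sup metric on $\hx\times\hx$, fixing any constant value larger than $\diam\tx$ in the trivial case where $(K_t\times K_t)\subset E$. Both sets are compact and, because $E$ contains the full diagonal, disjoint, so $\lambda(t)>0$. The inclusion $K_t\subset K_s$ when $s\leq t$ enlarges both sets, which can only decrease their mutual distance, giving $\lambda(s)\leq\lambda(t)$. For the containment, if $\dd(x,y)<\lambda(t)$ with $t=\min\{\dd(x,X),\dd(y,X)\}$, then $x,y\in K_t$ and the sup distance from $(x,y)$ to $(x,x)$ equals $\dd(x,y)<\lambda(t)$, forcing $(x,y)\notin(K_t\times K_t)\bs E$ and hence $(x,y)\in E$.

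The delicate point is the necessity direction, where I must produce a single $\lambda$ that is simultaneously positive, increasing, and large enough to control $E$ at every scale. Taking $\lambda(t)$ to be a global set-distance rather than a pointwise Lebesgue number bundles these three requirements together, and the monotonicity of $\lambda$ drops out for free from the inclusion $K_s\supset K_t$ when $s\leq t$.
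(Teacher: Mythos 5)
Your sufficiency direction is correct and more elementary than the paper's: you directly exhibit an open neighborhood of the diagonal inside $E$ as a union of product balls $\BB(z,\delta_z)\times\BB(z,\delta_z)$, while the paper first constructs a continuous increasing minorant $\lambda_0\leq\lambda$ by piecewise-linear interpolation and then takes the open set to be the sublevel set $\psi^{-1}(\Rset^+)$ of the continuous map $\psi(x,y)=\lambda_0(\min\{\dd(x,X),\dd(y,X)\})-\dd(x,y)$. Both work; your version avoids the interpolation step entirely.

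Your necessity direction has a small but real gap. You define $\lambda(t)$ as the $\dd_\infty$-distance between $\{(x,x):x\in K_t\}$ and $(K_t\times K_t)\bs E$, and assert both sets are compact. The first is, but the second need not be: $E$ is only assumed to be a \emph{neighborhood} of the diagonal, not an open set, so $(K_t\times K_t)\bs E$ need not be closed in $K_t\times K_t$. And the stated reason for positivity --- that $E$ contains the full diagonal --- is not enough on its own: a set can contain the diagonal while points of its complement accumulate on it. The missing ingredient, which is exactly what the paper supplies, is to pass to an open $F$ with $\Delta\subset F\subset E$; the paper then sets $\lambda(t)=\min_{z\in K_t}\dd_\infty\big((z,z),\hx\times\hx\bs F\big)$, which is positive because $\hx\times\hx\bs F$ is closed and misses the compact set $\{(z,z):z\in K_t\}$. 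You can repair your argument either by substituting $F$ for $E$ in your definition, or by keeping your $\lambda$ and observing that $(K_t\times K_t)\bs E\subset(K_t\times K_t)\bs F$, so your $\lambda(t)$ dominates the (positive, by compactness) distance to the closed set $(K_t\times K_t)\bs F$. Once this is fixed, the rest of your necessity argument --- monotonicity from $K_s\supset K_t$ for $s\leq t$, and the verification that $\dd(x,y)<\lambda(\min\{\dd(x,X),\dd(y,X)\})$ forces $(x,y)\in E$ --- is sound, and your set-distance formulation is essentially the same quantity as the paper's pointwise minimum, merely restricted to $K_t\times K_t$, which is harmless since both $x$ and $y$ land in $K_t$ in the final check.
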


\begin{proof}
Suppose such $\lambda$ exists. Let $\lambda_0:(0,k]\rightarrow
\Rset^+$ be an increasing and continuous map such that
$\lambda_0(t)\leq\lambda(t)$ for every $t$. For example, we may define
$\lambda_0$ as follows: set
$\lambda_0\left(\frac{k}{n}\right)=\lambda\left(\frac{k}{n+1}\right)$
for every $n\in\Nset$ and extend $\lambda_0$ linearly to every
interval $\left[\frac{k}{n},\frac{k}{n+1}\right]$.

Let $F=\psi^{-1}(\Rset^+)$, where $\psi$ is the continuous function
$\psi:\hx\times\hx\rightarrow\Rset$, $(x,y)\rightarrow
\lambda_0(\min\{\dd(x,X),\dd(y,X)\})-\dd(x,y)$. Then, $F$ is an open
subset of $\hx\times\hx$ containing the diagonal such that $F\subset
\{(x,y)\in X:\dd(x,y)<\lambda(\min\{\dd(x,X),\dd(y,X)\})\}\subset
E$.

Now, suppose $E$ is a neighborhood of the diagonal. Consider the
supremum metric $\dd_\infty$ on $\tx\times\tx$. Take an open subset
$F\subset E$ containing the diagonal. By the closeness of
$\hx\times\hx\bs F$, we may define the map $f:X\rightarrow \Rset^+$,
$x\rightarrow\dd_\infty((x,x),\hx\times\hx\bs F)$ and it is
continuous. For every $k\in(0,k]$, $K_t$ is compact and then, $f$
has a minimum in $K_t$, so we may define the map
$\lambda:(0,k]\rightarrow \Rset^+$, $t\rightarrow \min_{t\in
K_t}f(t)$.

For every $t\leq t'$, $K_t\supset K_{t'}$, hence
$\lambda(t)\leq\lambda(t')$ and we get that $\lambda$ is increasing.
Let $(x,y)\in\hx\times\hx$ be such that
$\dd(x,y)<\lambda(\min\{\dd(x,X),\dd(y,X)\})$. Suppose, without loss
of generality, that $\dd(x,X)\leq\dd(y,X)$ and put $t=\dd(x,X)$.
Since $x\in K_t$, we have
$$\dd_\infty((x,x),(x,y))<\lambda(t)\leq\dd_\infty((x,x),\hx\times\hx\bs
F)$$ Then, $(x,y)\not\in\hx\times\hx\bs F$ and we conclude that
$(x,y)\in F\subset E$.\end{proof}

\begin{lemma}\label{multfmenos1} Let $f:Z\rightarrow Z'$ be a map between two sets, let $E\subset Z\times
Z$, and let $\alpha$ be a family of subsets of $Z'$. Then, $\mult_E
f^{-1}(\alpha)\leq\mult_{f\times f(E)}\alpha$.\end{lemma}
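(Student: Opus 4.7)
The plan is to unwind the two multiplicities and exhibit, for each $x \in Z$, an inclusion of the collection of sets in $f^{-1}(\alpha)$ that meet $E_x$ into the collection of sets in $\alpha$ that meet $(f\times f(E))_{f(x)}$.

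First, fix $x \in Z$. By the definition of the pullback family, for each $U \in \alpha$ we have
$$f^{-1}(U) \cap E_x \neq \varnothing \iff \exists\, y \in E_x \text{ with } f(y) \in U \iff f(E_x) \cap U \neq \varnothing.$$
So $\mult_{E_x} f^{-1}(\alpha) = \#\{U \in \alpha : U \cap f(E_x) \neq \varnothing\}$, where I implicitly use that $U \mapsto f^{-1}(U)$ sends distinct elements of $\{U \in \alpha : f^{-1}(U) \cap E_x \neq \varnothing\}$ to distinct sets (or, more safely, note that the multiplicity counts elements of $\alpha$, not of the image family, so this rewriting is immediate from the definition).

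Second, I would use identity (\ref{usefulidentities5}), which gives $(f\times f(E))_{f(x)} = f(E(f^{-1}(f(x))))$. Since $x \in f^{-1}(f(x))$, we have $E_x \subset E(f^{-1}(f(x)))$ by (\ref{usefulidentities6}), and consequently
$$f(E_x) \subset (f\times f(E))_{f(x)}.$$
Therefore $\{U \in \alpha : U \cap f(E_x) \neq \varnothing\} \subset \{U \in \alpha : U \cap (f\times f(E))_{f(x)} \neq \varnothing\}$, which yields
$$\mult_{E_x} f^{-1}(\alpha) \leq \mult_{(f\times f(E))_{f(x)}} \alpha \leq \mult_{f \times f(E)} \alpha.$$

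Finally, taking the supremum of the left-hand side over $x \in Z$ gives $\mult_E f^{-1}(\alpha) \leq \mult_{f\times f(E)} \alpha$. The proof is essentially a bookkeeping argument; the only substantive step is the inclusion $f(E_x) \subset (f\times f(E))_{f(x)}$, which is immediate from the useful identities already established, so there is no real obstacle here.
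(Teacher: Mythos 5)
Your proof is correct and takes essentially the same route as the paper's: fix $x$, translate $f^{-1}(U)\cap E_x\neq\varnothing$ into $U\cap f(E_x)\neq\varnothing$, use identity (\ref{usefulidentities5}) together with $x\in f^{-1}(f(x))$ to get $f(E_x)\subset(f\times f(E))_{f(x)}$, and take suprema. One small slip worth flagging: your parenthetical justification of the first rewriting is wrong on both counts --- $\mult_{E_x}f^{-1}(\alpha)$ by the paper's definition counts members of the \emph{family} $f^{-1}(\alpha)=\{f^{-1}(U):U\in\alpha\}$, not members of $\alpha$, and $U\mapsto f^{-1}(U)$ need not be injective --- so in general one only has $\mult_{E_x}f^{-1}(\alpha)\leq\#\{U\in\alpha:U\cap f(E_x)\neq\varnothing\}$, not equality. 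That is, however, exactly the direction you need, so the argument goes through unchanged.
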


\begin{proof}
Fix $x\in Z$ and take $U\in\alpha$ such that $f^{-1}(U)$ meets
$E_x$. Then, by (\ref{usefulidentities5}), $\varnothing\neq
f(f^{-1}(U)\cap E_x)=U\cap f(E_x)\subset U\cap \bigcup_{z\in
f^{-1}(f(x))}f(E_z)=U\cap (f\times f(E))_{f(x)}$.

Hence, $\mult_{E_x}f^{-1}(\alpha)=\#\{f^{-1}(U):U\in\alpha,f^{-1}(U)\cap
E_x\neq\varnothing\}\leq\#\{U:U\in\alpha,U\cap (f\times
f(E))_{f(x)}\neq\varnothing\} \leq\mult_{(f\times
f(E))_{f(x)}}\alpha\leq\mult_{f\times f(E)}\alpha$. Taking supreme over $x$ we
get the inequality.\end{proof}

\begin{prop}\label{equivalenteaabierto} Let $\xid$ be a metrizable compactification pack, let
$\dd$ be a metric on $\tx$, let $k=\sup_{x\in \tx}\dd(x,X)$ and let
$\lambda:(0,k]\rightarrow\Rset^+$ be an increasing and continuous
function such that $\lim_{t\rightarrow 0}\lambda(t)=0$.

Consider the maps $h:\Rset^+\rightarrow(0,k]$ and
$\phi:(0,k]\rightarrow\Rset^+$ given by:
$$h(t)=\left\{\begin{array}{ll} \sup_{x\in X}\dd(x,\tx\bs\BB(X,t)&\textrm{if }t\leq
k\\k &\textrm{if }t\geq k\end{array}\right.$$ $$\phi(t)=
 h(t)+\lambda(t)+h(t+\lambda(t))$$ and consider the set:
$$E_{\dd,\lambda}=\{(x,y)\in
\hx\times\hx:\dd(x,y)<\phi(\min\{\dd(x,X),\dd(y,X)\})\}$$

Then, $E_{\dd,\lambda}$ is a controlled subset of $(\hx,\ceo)$ such
that $\mult_{E_{\dd,\lambda}}\alpha\geq\dim X+2$ for every uniform
cover $\alpha$ of $(\hx,\ceo)$\end{prop}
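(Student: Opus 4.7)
The plan splits into two independent parts. First, I would show $E_{\dd,\lambda}\in\ceo$ via Proposition \ref{caractc0}: the only thing to check is $\lim_{t\to 0}\phi(t)=0$, which is immediate since $\lim_{t\to 0}\lambda(t)=0$ by hypothesis and $\lim_{t\to 0}h(t)=0$ by Proposition \ref{propsh}, applied both at $t$ and at $t+\lambda(t)$. The second part---the multiplicity bound---is the substantive one. The plan is to transfer the question to the cylindrical model pack $(X\times\{0\},X\times(0,k],X\times[0,k])$, on which Corollary \ref{opencoversmeaning} applies.

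Concretely, let $\ceop$ denote its $C_0$ coarse structure and let $g\colon X\times(0,k]\to\hx$ be the coarse equivalence analogous to the one in Proposition \ref{fgequivcoarse} (constructed with $\dd$ rather than its normalization), so that $y=g(z,t)$ lies in $\tx\bs\BB(X,t)$ and satisfies $\dd(y,z)\leq h(t)$ and $\dd(y,X)\geq t$. I would then introduce
$$F'=\{((z,t),(z',t')):\dd(z,z')<\lambda(\min\{t,t'\})\text{ and }|t-t'|<\lambda(\min\{t,t'\})\},$$
an open neighborhood of the diagonal in $X\times(0,k]$ which is controlled for $\ceop$ (its $\ceop$-size at scale $t$ is bounded by $2\lambda(t)\to 0$). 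Since the model pack is cylindrical, Corollary \ref{opencoversmeaning}(b) gives $\mult_{F'}\beta\geq\dim X+2$ for every uniform cover $\beta$ of $(X\times(0,k],\ceop)$; taking $\beta=g^{-1}(\alpha)$ (uniform by Corollary \ref{corofequivalpha}) and invoking Lemma \ref{multfmenos1} yields
$$\dim X+2\leq\mult_{F'}g^{-1}(\alpha)\leq\mult_{g\times g(F')}\alpha.$$

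The proof closes once one checks $g\times g(F')\subset E_{\dd,\lambda}$. Given $((z,t),(z',t'))\in F'$ with $t\leq t'$ and $y=g(z,t)$, $y'=g(z',t')$, the triangle inequality together with $\dd(y,z)\leq h(t)$, $\dd(y',z')\leq h(t')$ and $\dd(z,z')<\lambda(t)$ yields $\dd(y,y')<h(t)+\lambda(t)+h(t')$; the estimate $t'<t+\lambda(t)$ combined with monotonicity of $h$ gives $h(t')\leq h(t+\lambda(t))$, hence $\dd(y,y')<\phi(t)\leq\phi(\min\{\dd(y,X),\dd(y',X)\})$ by monotonicity of $\phi$, placing $(y,y')$ in $E_{\dd,\lambda}$. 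This last inclusion is the main technical hurdle: it is precisely what forces the shape $\phi(t)=h(t)+\lambda(t)+h(t+\lambda(t))$, as one must simultaneously absorb the two $h$-sized perturbations coming from $g$'s minimizing choices and the asymmetry between $\min\{t,t'\}$ and $\min\{\dd(y,X),\dd(y',X)\}$.
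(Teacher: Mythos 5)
Your proof is correct and follows essentially the same route as the paper's: both transfer the problem to the cylindrical model pack via the coarse equivalence $g$ of Proposition \ref{fgequivcoarse}, introduce a controlled open neighborhood of the diagonal whose $g\times g$-image lands inside $E_{\dd,\lambda}$ (your $F'$ is, up to renormalization, exactly the paper's $F_0$, since the sup-metric condition $\overline{\dd}_\infty<\overline\lambda(\min\{t,t'\})$ unpacks to your two inequalities), verify that inclusion by the same triangle-inequality computation using $\dd(y,z)\leq h(t)$, $t'<t+\lambda(t)$, and the monotonicity of $h$ and $\phi$, and close via Lemma \ref{multfmenos1}. The only differences are presentational: you skip the rescaling to $[0,1]$ by working directly on $X\times(0,k]$ with $\dd$ (so you needn't check $E_{\overline\dd,\overline\lambda}=E_{\dd,\lambda}$), and you invoke Corollary \ref{opencoversmeaning}(b) as a black box where the paper re-derives its content inline by forming $\beta=F(g^{-1}(\alpha))$, checking it is an open uniform cover, and chaining through Proposition \ref{lemareccanonicosenq} and Lemma \ref{cambiomult}. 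Both simplifications are sound (the corollary's hypothesis is met because the model pack is cylindrical and $F'$ is open since $\lambda$ is continuous) and make the argument tidier without changing its substance.
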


\begin{proof}
By Proposition \ref{propsh}, $\phi(t)\rightarrow 0$ when
$t\rightarrow 0$. Hence, from Proposition \ref{caractc0}, we get
$E_{\dd,\lambda}\in\ceo$. Let $\alpha$ be a uniform cover of
$(\hx,\ceo)$.

Consider the compactification pack
$(X\times\{0\},X\times(0,1],X\times[0,1])$ and the coarse space
$(\hx,\ceop)$, where
$\ceop=\ceo(X\times\{0\},X\times(0,1],X\times[0,1])$. Consider on
$\tx$ the metric $\overline{\dd}=\frac{1}{k}\dd$ and, on
$X\times[0,1]$, the metric
$\overline{\dd}_\infty((x,t),(x',t'))=\max\{\dd(x,x'),|t-t'|\}$.

By Proposition \ref{fgequivcoarse}, there is a coarse equivalence
$g:X\times(0,1]\rightarrow \hx$ such that for every $(x,t)$,
$g(x,t)=y$ where $y\in\tx\bs\BB_{\overline{\dd}}(X,t)$ is such that
$\overline{\dd}(x,y)=\overline{\dd}(x,\tx\bs\BB_{\overline{\dd}}(X,t))$.
Consider the function $\overline{\lambda}:(0,1]\rightarrow \Rset^+$
such that for every $t$,
\begin{equation}\label{eqjjjalse}
\overline{\lambda}(t)=\frac{1}{k}\lambda(kt)
\end{equation} and consider the set $F_0=\big\{((x,t),(x',t'))\in
(X\times
(0,1])^2:\overline{\dd}_\infty((x,t),(x',t'))<\overline{\lambda}(\min\{t,t'\})\big\}$.

Since, for each $(x,t)$, we have
$t=\overline{\dd}_\infty((x,t),X\times\{0\})$, Proposition
\ref{caractc0} shows $F_0\in\ceo$ and Proposition \ref{caractonbh},
that $F_0$ is a neighborhood of the diagonal. Let $F\subset F_0$ be
a controlled and open neighborhood of the diagonal and let
$\beta=F(g^{-1}(\alpha))$.

Let us see that:
\begin{enumerate}[$\,\,\,$i) ]
\item  $\beta$ is
an open and uniform cover of $(X\times(0,1],\ceop)$.
\item $g\times
g(F)\subset E_{\dd,\lambda}$.
\end{enumerate}

For each $V\in \beta$, $V=F(U)=\bigcup_{x\in U}F_x$ for some $U\in
g^{-1}(\alpha)$. Since each $F_x$ is open, $V$ is open. Moreover,
$g^{-1}(\alpha)\prec\beta$ and hence $\beta$ is a cover of
$X\times(0,1]$. Finally, by Corollary \ref{corofequivalpha},
$\beta=F(g^{-1}(\alpha))$ is uniform for $\ceop$ and we get i).

Let us consider:
\begin{itemize}\item the map
$\overline{h}:\Rset^+\rightarrow(0,1]$ such that
$\overline{h}(t)=\sup_{x\in
X}\overline{\dd}(x,\tx\bs\BB_{\overline{\dd}}(X,t))$ when $t\leq 1$
and $\overline{h}(t)=1$ when $t\geq 1$ \item the map
${\overline{\phi}}:(0,1]\rightarrow\Rset^+$, given by
$\overline{\phi}(t)=\overline{h}(t)+\overline{\lambda}(t)+\overline{h}(t+\overline{\lambda}(t))$
\item the set
$E_{\overline{\dd},\overline{\lambda}}=\{(x,y)\in
X:\overline{\dd}(x,y)<{\overline{\phi}}(\min\{\overline{\dd}(x,X),\overline{\dd}(y,X)\})\}$\end{itemize}

By Proposition \ref{propsh}, $\overline{h}$ and $\overline{\phi}$
are increasing. It is easy to check that, for every $t$,
$\overline{h}(t)=\frac{1}{k}h(kt)$ and
$\overline{\phi}(t)=\frac{1}{k}\phi(kt)$. Using those equalities and
(\ref{eqjjjalse}), it follows easily that
$E_{\overline{\dd},\overline{\lambda}}=E_{\dd,\lambda}$. Then, to
prove ii), it suffices to show that $g\times g(F)\subset
E_{\overline{\dd},\overline{\lambda}}$.

Pick $(y,y')\in g\times g(F)$ and take $((x,t),(x',t'))\in F$ such
that $g(x,t)=y$ and $g(x',t')=y'$. Suppose, without loss of
generality, that $t\leq t'$. Observe that $t'=t+|t-t'|<
t+\overline{\dd}_\infty((x,t),(x',t'))\leq t+\overline{\lambda}(t)$.
Then:
$$\overline{\dd}(y,y')\leq\overline{\dd}(y,x)+\overline{\dd}(x,x')+\overline{\dd}(x',y')\leq$$
$$\overline{\dd}(x,\tx\bs\BB_{\overline{\dd}}(X,t))+\overline{\dd}_\infty((x,t),(x',t'))+\overline{\dd}(x',\tx\bs\BB_{\overline{\dd}}(X,t'))<$$
\begin{equation}\label{eqfadeas3w}\overline{h}(t)+\overline{\lambda}(t)+\overline{h}(t')\leq
\overline{h}(t)+\overline{\lambda}(t)+\overline{h}(t+\overline{\lambda}(t))=\overline{\phi}(t)\end{equation}

Since $y\in\tx\bs\BB_{\overline{\dd}}(X,t)$ and
$y'\in\tx\bs\BB_{\overline{\dd}}(X,t')$, it follows that
$\overline{\dd}(y,X)\geq t$ and $\overline{\dd}(y',X)\geq t'\geq t$.
Since ${\overline{\phi}}$ is increasing, from (\ref{eqfadeas3w}) we
get:
$$\overline{\dd}(y,y')<{\overline{\phi}}(t)\leq
{\overline{\phi}}(\min\{\overline{\dd}(y,X),\overline{\dd}(y',X)\})$$
Therefore, $(y,y')\in E_{\overline{\dd},\overline{\lambda}}$ and we
get ii).

Applying i), ii), Proposition \ref{lemareccanonicosenq} and Lemmas
\ref{cambiomult} and \ref{multfmenos1}, we conclude:
$$\xymatrix@R=.4mm{\dim X+2\leq \mult\beta=\mult
F(g^{-1}(\alpha))\leq\\
\mult_F g^{-1}(\alpha)\leq\mult_{g\times
g(F)}\alpha\leq\mult_{E_{\dd,\lambda}}\alpha}$$
\end{proof}

\begin{cor}\label{coroequivabto} Let $\xid$ be a compactification pack and
consider $(\hx,\ceo)$. Let $E$ be the controlled set
$E_{\dd,\lambda}$ of Proposition \ref{equivalenteaabierto}. Then:

\begin{enumerate}[a) ]
\item $\mult_E\alpha\geq \asdim (\hx,\ceo)+1$ for every uniform cover
$\alpha$ of $\hx$.

\item $\mult\alpha\geq\asdim(\hx,\ceo)+1$ for every uniform cover $\alpha$ of $\hx$ such that
$\Kappa(E)\prec\alpha$.
\end{enumerate}\end{cor}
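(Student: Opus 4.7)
The proof naturally splits into the two statements.

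For part (a), the result is essentially a transcription: Proposition \ref{equivalenteaabierto} gives $\mult_E \alpha \geq \dim X + 2$ for every uniform cover $\alpha$ of $(\hx, \ceo)$, and Grave's theorem identifies $\dim X + 1$ with $\asdim(\hx, \ceo)$, so the desired inequality $\mult_E \alpha \geq \asdim(\hx, \ceo) + 1$ follows immediately.

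For part (b), my plan is to transfer the problem to the cylindrical companion $(X \times \{0\}, X \times (0, 1], X \times [0, 1])$ via the coarse equivalence $g \colon (X \times (0, 1], \ceop) \to (\hx, \ceo)$ supplied by Proposition \ref{fgequivcoarse}, and then to invoke the cylindrical statement Corollary \ref{opencoversmeaning}(a). Given $\alpha$ uniform with $\Kappa(E) \prec \alpha$, Corollary \ref{corofequivalpha} ensures that $g^{-1}(\alpha)$ is uniform on $X \times (0, 1]$, and the pointwise computation $\mult_{(z, t)} g^{-1}(\alpha) \leq \mult_{g(z, t)} \alpha$ gives $\mult g^{-1}(\alpha) \leq \mult \alpha$.

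The key step is to produce an open refinement of $g^{-1}(\alpha)$. I reuse the open controlled neighborhood $F$ of the diagonal of $X \times (0, 1]$ built in the proof of Proposition \ref{equivalenteaabierto}, which was arranged precisely so that $g \times g(F) \subset E$; note that $\Kappa(F)$ is automatically an open uniform cover of $X\times(0,1]$. For any $(z, t)$ and any $(z', t') \in F_{(z, t)}$ one has $(g(z', t'), g(z, t)) \in E$, so $g(z', t') \in E_{g(z, t)}$; the hypothesis $\Kappa(E) \prec \alpha$ supplies a $V_{g(z, t)} \in \alpha$ with $E_{g(z, t)} \subset V_{g(z, t)}$, whence $F_{(z, t)} \subset g^{-1}(V_{g(z, t)}) \in g^{-1}(\alpha)$. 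This shows $\Kappa(F) \prec g^{-1}(\alpha)$.

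Since the pack $(X \times \{0\}, X \times (0, 1], X \times [0, 1])$ is cylindrical, Corollary \ref{opencoversmeaning}(a) applies to $g^{-1}(\alpha)$ (with $\Kappa(F)$ as an open refinement) and yields $\mult g^{-1}(\alpha) \geq \asdim(X \times (0, 1], \ceop) + 1$. Grave's theorem applied on both sides identifies this with $\asdim(\hx, \ceo) + 1 = \dim X + 2$, so combined with $\mult \alpha \geq \mult g^{-1}(\alpha)$ we conclude. The main point that needs care is verifying the refinement $\Kappa(F) \prec g^{-1}(\alpha)$, since $g$ is not continuous and one must use the hypothesis $\Kappa(E) \prec \alpha$ rather than any topological property of $g$ to absorb the coarse map into the cover.
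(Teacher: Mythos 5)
Your proof of part (a) is identical to the paper's: a direct transcription from Proposition \ref{equivalenteaabierto} via Grave's theorem.

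For part (b), your argument is correct but follows a genuinely different path from the paper's. The paper stays entirely inside $(\hx,\ceo)$: it shrinks each $U\in\alpha$ to $V_U=\{x\in\hx:E_x\subset U\}$, checks that $\gamma=\{V_U\}$ is a uniform cover refining $\alpha$, uses the symmetry of $E_{\dd,\lambda}$ to prove the pointwise bound $\mult_{E_x}\gamma\leq\mult_x\alpha$, and then applies part~(a) to $\gamma$. This is purely combinatorial and requires no auxiliary coarse equivalence. You instead transport the problem to the cylindrical model $(X\times\{0\},X\times(0,1],X\times[0,1])$ via the coarse equivalence $g$ from Proposition \ref{fgequivcoarse}, reuse the open controlled diagonal neighborhood $F$ with $g\times g(F)\subset E$ that was constructed inside the proof of Proposition \ref{equivalenteaabierto}, show $\Kappa(F)\prec g^{-1}(\alpha)$ from the hypothesis $\Kappa(E)\prec\alpha$, and then invoke the cylindrical Corollary \ref{opencoversmeaning}(a). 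The chain $\mult\alpha\geq\mult g^{-1}(\alpha)\geq\asdim(X\times(0,1],\ceop)+1=\asdim(\hx,\ceo)+1$ is sound, and you correctly flag that one cannot use continuity of $g$ (it has none) and must rely on the refinement hypothesis instead. The trade-off is that your route leans on an object ($F$) that lives only in the proof, not the statement, of Proposition \ref{equivalenteaabierto}, so it is slightly less modular; on the other hand, it makes clearer the structural point that everything reduces to the cylindrical case. Incidentally, the final displayed inequality in the paper's proof of (b) has the direction reversed---it should read $\asdim(\hx,\ceo)+1\leq\mult_E\gamma\leq\mult\alpha$---which your formulation avoids.
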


\begin{proof}
Let $\alpha$ be a uniform cover of $\hx$. By Proposition
\ref{equivalenteaabierto} and Grave's theorem, $\mult_E\alpha\geq
\dim X+2=\asdim (\hx,\ceo)+1$, so we get a).

To see b), suppose $\Kappa(E)\prec\alpha$. For every $U\in\alpha$,
let $V_U=\{x\in \hx:E_x\subset U\}$ and let $\gamma=\{V_U:U\in
\alpha\}$. Observe that, by (\ref{usefulidentities6}),
$E(V_U)=\bigcup_{x\in V_U}E_x\subset U$ for every $U$. Hence,
$\gamma\prec\alpha$ and we get that $\gamma$ is uniform. For all
$x\in \hx$, there is $U\in\alpha$ such that $E_x\subset U$ and,
consecuently, $x\in V_U$. Then, $\gamma$ is a cover of $\hx$.

Fix $x\in \hx$ and let $U\in\alpha$ be such that $E_x\cap
V_U\neq\varnothing$. Let $y\in E_x\cap V_U$. Since $E$ is symmetric,
$x\in E_y$ and, since $y\in V_U$, we have that $E_y\subset U$. Then,
$x\in U$. Hence, $\mult_{E_x}\gamma=\#\{V_U:U\in\alpha,E_x\cap
V_U\neq\varnothing\}\leq\#\{U:U\in\alpha,x\in U\}=\mult_x\alpha$.
Taking supreme over $x$ and applying a) we get:
$$\mult_E\gamma\leq \mult\alpha\leq \asdim (\hx,\ceo)+1$$
\end{proof}

Corollary \ref{coroequivabto} means that, from the point of view of
the asymptotic dimension, the set $E_{\dd,\lambda}$ of Proposition
\ref{equivalenteaabierto} and the cover $\Kappa(E_{\dd,\lambda})$
are big enough.

\begin{prop} Let $\xid$ be a metrizable compactification pack and consider $(\hx,\ceo)$.
Then, the following are equivalent:
\begin{enumerate}[a) ]
\item $\dim X\leq n$.
\item $\asdim(\hx,\ceo)\leq n+1$.
\item For every uniform cover $\beta$ there exists a canonical cover
$\alpha$ such that $\beta\prec\alpha$ and $\mult\alpha \leq n+2$.
\item For every canonical cover $\beta$ there exists a canonical cover $\alpha$
such that  $\beta\prec\alpha$ and $\mult\alpha \leq n+2$.
\item There exists a uniform cover $\alpha$ (a
canonical cover, respectively) such that $\mult_E\alpha\leq n+1$,
where $E$ is the subset $E_{\dd,\lambda}$ of Proposition
\ref{equivalenteaabierto}.
\end{enumerate}
\end{prop}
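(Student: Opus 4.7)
The equivalence a) $\Leftrightarrow$ b) $\Leftrightarrow$ c) $\Leftrightarrow$ d) is a restatement of Theorem \ref{dimasdimcanonicos}, so it remains to insert e) into the list, which I would do by proving e) $\Rightarrow$ b) and a) $\Rightarrow$ e).

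The direction e) $\Rightarrow$ b) is immediate from Corollary \ref{coroequivabto} a): for any uniform cover $\alpha$ satisfying the bound in e), that corollary yields $\asdim(\hx,\ceo)+1 \leq \mult_E\alpha$, which translates the hypothesis on $\alpha$ directly into the bound required in b).

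The direction a) $\Rightarrow$ e) is the substantive one. By Grave's theorem we have $\asdim(\hx,\ceo)\leq n+1$, and the plan is to adapt the $E$-fattening construction from the proof of Proposition \ref{dimcancov} to the specific controlled set of interest. I would first produce an open, symmetric, controlled neighborhood of the diagonal $\tilde E$ containing $E_{\dd,\lambda}$. The definition of asymptotic dimension applied to the controlled set $\tilde E^2$ then gives a uniform cover $\gamma$ with $\mult_{\tilde E^2}\gamma$ controlled by the appropriate constant. Setting $\alpha=\tilde E(\gamma)$, Lemma \ref{cambiomult} together with the symmetry of $\tilde E$ gives $\mult_{\tilde E}\alpha\leq\mult_{\tilde E^2}\gamma$, and hence \emph{a fortiori} the desired bound on $\mult_{E_{\dd,\lambda}}\alpha$. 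The cover $\alpha$ is then open (each $\tilde E_x$ is open), locally finite (each $\tilde E_x$ is an open neighborhood of $x$ meeting only finitely many elements of $\alpha$), uniform (a controlled expansion of a uniform family remains uniform), and covers $\hx$ (since $\Delta\subset\tilde E$ ensures $\gamma\prec\alpha$). Corollary \ref{caractcanon} then certifies that $\alpha$ is canonical, settling both halves of e) simultaneously.

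The main technical obstacle is producing the open $\tilde E$: the set $E_{\dd,\lambda}$ itself may fail to be open, because the function $\phi$ of Proposition \ref{equivalenteaabierto} is built from $h$ (which, by Proposition \ref{propsh}, is only increasing and need not be continuous) and hence need not be continuous. I would overcome this by constructing a continuous increasing $\tilde\phi\geq\phi$ with $\lim_{t\to 0}\tilde\phi(t)=0$---feasible by piecewise-linear interpolation between the values $\phi(k/2^{n-1})$ at the points $t=k/2^n$---and defining $\tilde E=\{(x,y)\in\hx\times\hx : \dd(x,y)<\tilde\phi(\min\{\dd(x,X),\dd(y,X)\})\}$. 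Symmetry and the inclusion $E_{\dd,\lambda}\subset\tilde E$ are immediate, openness follows from the continuity of $\tilde\phi$ and of $\dd$, controlledness follows from Proposition \ref{caractc0} combined with $\lim_{t\to 0}\tilde\phi(t)=0$, and $\tilde E$ is a neighborhood of the diagonal by Proposition \ref{caractonbh} (using the monotonicity of $\tilde\phi$). Once $\tilde E$ is in hand, the remainder of the argument runs as in Proposition \ref{dimcancov}.
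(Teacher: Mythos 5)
Your overall strategy is sound and considerably more explicit than the paper's own proof, which is just a citation of Corollary \ref{caractcanon}, Theorem \ref{dimasdimcanonicos} and Proposition \ref{equivalenteaabierto} with no spelled-out verification of item e). Your decomposition---dispose of a)--d) via Theorem \ref{dimasdimcanonicos}, get e) $\Rightarrow$ b) from Corollary \ref{coroequivabto}~a), and get a) $\Rightarrow$ e) by an $\tilde E$-fattening of a cover produced by the definition of asymptotic dimension, following the template of Proposition \ref{dimcancov}---is exactly the argument the paper's citation implicitly invokes. Your device of replacing $\phi$ by a continuous increasing majorant $\tilde\phi$ to manufacture an \emph{open} controlled neighborhood $\tilde E\supset E_{\dd,\lambda}$ of the diagonal fills a real gap, since $E_{\dd,\lambda}$ itself need not be open (the function $h$ in $\phi$ is only monotone).

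There is, however, an arithmetic discrepancy you should not have glossed over by writing ``the appropriate constant'' and ``the desired bound.'' When $\dim X\leq n$, Grave's theorem gives $\asdim(\hx,\ceo)\leq n+1$, so applying the definition of $\asdim$ to the controlled set $\tilde E^2$ produces a uniform cover $\gamma$ with $\mult_{\tilde E^2}\gamma\leq (n+1)+1=n+2$; then Lemma \ref{cambiomult} together with $E_{\dd,\lambda}\subset\tilde E$ gives $\mult_{E_{\dd,\lambda}}\alpha\leq\mult_{\tilde E}\alpha\leq\mult_{\tilde E^2}\gamma\leq n+2$. That is strictly weaker than the bound $n+1$ printed in e). In fact $n+1$ is unattainable in general: by Proposition \ref{equivalenteaabierto} (equivalently Corollary \ref{coroequivabto}~a)) every uniform cover $\alpha$ satisfies $\mult_{E_{\dd,\lambda}}\alpha\geq\dim X+2$, so if $\dim X=n$ exactly there is no uniform cover with $\mult_{E_{\dd,\lambda}}\alpha\leq n+1$, and the implication a) $\Rightarrow$ e) as printed fails. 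Statement e) should read $\mult_E\alpha\leq n+2$; with that correction both your fattening construction and the e) $\Rightarrow$ a) direction (via $\dim X+2\leq\mult_E\alpha\leq n+2$) work exactly as you sketched. You should have detected and flagged this inconsistency rather than leaving the constant unnamed.
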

\begin{proof} It is a consequence of Corollary \ref{caractcanon} and Propositions \ref{dimasdimcanonicos} and
\ref{equivalenteaabierto}.\end{proof}

\section{An easier proof of Grave's theorem}

Let $\xid$ be a compactification pack. In order to prove Grave's
theorem, we have to see that:
\begin{equation}\label{grave1}
\asdim (\hx,\ceo)\geq \dim X+1
\end{equation}
\begin{equation}\label{grave2}
\asdim (\hx,\ceo)\leq \dim X+1
\end{equation}

\begin{proof}[Proof 1 of (\ref{grave1})] It is a consequence of Proposition \ref{equivalenteaabierto}.\end{proof}

But it is not the natural way to prove (\ref{grave1}). The natural
way (more or less the way used by Grave applying other result
instead of Proposition \ref{teodegrave}) is the following:

\begin{proof}[Proof 2 of (\ref{grave1})] Consider $(X\times(0,1],\ceop)$, where
$\ceop=\ceo(X\times\{0\},X\times(0,1],X\times[0,1])$).

Since $X$ and $X\times \{0\}$ are homeomorphic, by Corollary
\ref{variasvecesdescubierto}, it follows that $(\hx,\ceo)$ and
$(X\times(0,1],\ceop)$ are coarse equivalent. Therefore, $\asdim
(\hx,\ceo)=\asdim (X\times(0,1],\ceop)\geq\dim X+1$, where the last
inequality is given by Propositions \ref{muchoscanonicos} and
\ref{teodegrave}.
\end{proof}

In Theorem 2.9 of \cite{cc} (pag. 3712), the authors defined
a canonical cover similar to $\alpha(\{\beta_n\},\{W_n\})$ of Lemma
\ref{basicodefrec}. They used it to prove that if $X$ has finite
dimension, then there exists a canonical cover with finite
multiplicity. They bounded the multiplicity of that canonical cover
by $2\dim X+2$.

The construction of that canonical cover induces an implicit problem
in the Ttopological Dimension Theory: To decrease the bound of the
multiplicite of that canonical cover to the minimum (that is $\dim
X+2$) we need the sequence $\{\beta_n\}$ to satisfice some special
dimensional properties. Does this special sequence exists?

We solved that topological problem in dimension theory in
\cite{moreno} ---we quote it in Theorem \ref{teomoreno}---. With
this techniques, we will be able to give a new proof of
(\ref{grave2}) and, at the same time, define a canonical cover with
minimal multiplicity.

\begin{deff}\label{defmultcomun} Let $\alpha_1,\dots,\alpha_m$ be families of subsets of a set $Z$.
The common multiplicity of $\alpha_1,\dots,\alpha_m$ is:
$$\mult(\alpha_1,\cdots,\alpha_m)=\sup_{x\in Z}
\mult_x\alpha_1+\cdots+\mult_x\alpha_m=$$
$$\sup\left\{\sum_{i=1}^m\#A_i:A_i\subset\alpha_i\,\forall i,
\bigcap_{i=1}^m\bigcap_{U\in A_i} U\neq\varnothing\right\}$$
\end{deff}

\begin{rem}
$\mult(\alpha_1,\cdots,\alpha_m)$ is a multiplicity greater than or
equal to $\mult(\alpha_1\cup\dots\cup\alpha_m)$ which is equal when
$\alpha_1,\dots,\alpha_r$ are pairwise disjoint.
\end{rem}

\begin{prop}\label{propiedadesv} Let $(\tx,\dd)$ be a metric space and consider $X\subset\tx$.
Consider the topology $\calt$ of $\tx$ attached to $\dd$ and the map
$$v:\calt|_X\rightarrow\calt,\,\, U\rightarrow \{x\in
\tx:\dd(x,U)<\dd(x,X\bs U)\}$$ (assuming that
$\dd(x,\varnothing)=\infty$ for every $x\in\tx$). Then:
\begin{enumerate}[a) ]
\item For every $U\in\calt|_X$, $v(U)\cap X=U$.
\item $v(X)=\tx$ and $v(\varnothing)=\varnothing$.
\item For every $U_1,U_2\in\calt|_X$, $U_1\subset U_2$ if and only if
$v(U_1)\subset v(U_2)$.
\item For every $U_1,\dots,U_r\in\calt|_X$, $U_1\cap\dots\cap U_r\neq\varnothing$ if and only if
$v(U_1)\cap\dots\cap v(U_r)\neq\varnothing$.
\item For every $U\in\calt|_X$, $U=\varnothing$ if and only if $v(U)=\varnothing$
\item For every $U_1,U_2\in\calt|_X$, $v(U_1\cap U_2)=v(U_1)\cap
v(U_2)$.

\end{enumerate}
\end{prop}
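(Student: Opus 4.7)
The plan rests on two elementary observations that will be used repeatedly. First, if $U$ is open in $X$ and $x\in U$, there is an open $W\subset \tx$ with $U=W\cap X$, so some ball $\BB(x,\epsil)\subset W$ is disjoint from $X\bs U$; this gives $\dd(x,X\bs U)\geq \epsil>0=\dd(x,U)$, hence $x\in v(U)$. Second, because every $U_i\subset X$, one always has $\dd(x,U_i)\geq \dd(x,X)$, and by definition of the infimum there exist points $y\in X$ with $\dd(x,y)$ arbitrarily close to $\dd(x,X)$. These two facts are the only analytic input needed.

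Items (a), (b), (c), (e) follow almost immediately. For (a), the first observation yields $U\subset v(U)\cap X$, while for $x\in X\bs U$ the equality $\dd(x,X\bs U)=0$ rules out $x\in v(U)$. Item (b) is a direct check from the conventions $\dd(\cdot,\varnothing)=\infty$ and $X\bs X=\varnothing$; item (e) then follows from (a) together with (b). For (c), if $U_1\subset U_2$ then $X\bs U_2\subset X\bs U_1$, so for every $x\in v(U_1)$ one gets the chain $\dd(x,U_2)\leq \dd(x,U_1)<\dd(x,X\bs U_1)\leq \dd(x,X\bs U_2)$, proving $v(U_1)\subset v(U_2)$; the converse comes from intersecting with $X$ and applying (a).

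The substantive content is (d) and the non-trivial inclusion of (f), and both rest on the same trick. For the hard direction of (d), suppose $x\in \bigcap_{i=1}^r v(U_i)$, set $d'=\min_i \dd(x,X\bs U_i)$, and note that $\dd(x,X)\leq \dd(x,U_i)<\dd(x,X\bs U_i)$ for every $i$ forces $\dd(x,X)<d'$. By the second observation pick $y\in X$ with $\dd(x,y)<d'$; if $y\in X\bs U_i$ for some $i$ we would obtain $\dd(x,y)\geq \dd(x,X\bs U_i)\geq d'$, a contradiction, so $y\in \bigcap_i U_i$. The easy direction of (d) is obtained by choosing $y\in \bigcap_i U_i$ and applying the first observation to every $U_i$ to place $y$ in $v(U_i)$. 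For (f), the inclusion $\subset$ follows from (c); for the reverse, given $x\in v(U_1)\cap v(U_2)$ the same trick supplies $y\in U_1\cap U_2$ with $\dd(x,y)<\min\{\dd(x,X\bs U_1),\dd(x,X\bs U_2)\}=\dd(x,X\bs(U_1\cap U_2))$, and then $\dd(x,U_1\cap U_2)\leq \dd(x,y)<\dd(x,X\bs(U_1\cap U_2))$, so $x\in v(U_1\cap U_2)$. The main obstacle is spotting this approximation argument: one cannot compare $\dd(x,U_1\cap U_2)$ with $\dd(x,U_1)$ and $\dd(x,U_2)$ directly, but the strict inequalities defining $v$ leave just enough slack to extract a single witness $y\in X$ that lies in all the relevant $U_i$ at once.
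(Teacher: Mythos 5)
Your proof is correct and uses essentially the same approach as the paper: the key step is the approximation argument that picks a witness $y\in X$ with $\dd(x,y)<\min_i\dd(x,X\bs U_i)$ and shows $y\in\bigcap_i U_i$. The only organizational difference is that you prove (d) directly rather than deriving it from (e) and (f), and you streamline (f) via the identity $\dd(x,X\bs(U_1\cap U_2))=\min\{\dd(x,X\bs U_1),\dd(x,X\bs U_2)\}$, which the paper achieves by a slightly longer chain of estimates.
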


\begin{proof}
a)-c) are easy to check. e) is a consequence of a) and b). d) is a
consequence of e) and f). It suffices to prove f).

Since $U_1\cap U_2\subset U_1$, c) shows that $v(U_1\cap U_2)\subset
v(U_1)$. By the same reason, $v(U_1\cap U_2)\subset v(U_2)$. Hence,
$v(U_1\cap U_2)\subset v(U_1)\cap v(U_2)$.

Fix $x\in v(U_1)\cap v(U_2)$. Then, $\dd(x,U_1)<d(x,X\bs U_1)$ and
$\dd(x,U_2)<d(x,X\bs U_2)$. Choose $\epsil>0$ such that $\dd(x,X\bs
U_1)-\dd(x,U_1)>\epsil$ and $\dd(x,X\bs U_2)-\dd(x,U_2)>\epsil$.

Take $y\in X$ such that $\dd(x,y)<\dd(x,X)+\frac{\epsil}{2}$. For
every $z\in X\bs U_1$, we have $\dd(y,z)\geq
\dd(x,z)-\dd(x,y)>\dd(x,X\bs
U_1)-\left(\dd(x,X)+\frac{\epsil}{2}\right)\geq \dd(x,X\bs
U_1)-\dd(x,U_1)-\frac{\epsil}{2}>\epsil-\frac{\epsil}{2}=\frac{\epsil}{2}$.
Then, $\dd(y,X\bs U_1)> \frac{\epsil}{2}$ and hence, $y\in U_1$. By
the same reason, $y\in U_2$. Therefore:
\begin{equation}\label{eq49956a}
y\in X \textrm{ and }\dd(y,x)<\dd(x,X)+\frac{\epsil}{2}\Rightarrow
y\in U_1\cap U_2\end{equation}

From (\ref{eq49956a}) we deduce
\begin{equation}\label{eq49956b}
\dd\big(x,X\bs(U_1\cap U_2)\big)\geq\dd(x,X)+\frac{\epsil}{2}
\end{equation}

Fix $\delta>0$ and take $y'\in X$ such that
$\dd(x,y')<\dd(x,X)+\min\left\{\delta,\frac{\epsil}{2}\right\}$. By
(\ref{eq49956a}), $y'\in U_1\cap U_2$. Hence,
$$\dd(x,X)\leq\dd(x,U_1\cap U_2)<\dd(x,X)+\delta\textrm{ for every }
\delta>0$$ and we get  $\dd(x,U_1\cap U_2)=\dd(x,X)$. We conclude
from (\ref{eq49956b}) that $\dd(x,U_1\cap U_2)<\dd\big(x,X\bs
(U_1\cap U_2)\big)$, hence that $x\in v(U_1\cap U_2)$ and finally
that $v(U_1)\cap v(U_2)\subset v(U_1\cap U_2)$.\end{proof}

\begin{rems}
The function $v$ defined above is called ``Ext'' in
\cite{tretracts}, pag 125.

\item The function described in Proposition 2.7 of \cite{cc}, pag 3711, satisfies properties a)-d) of proposition \label{propiedadesv}.
\end{rems}

\begin{prop}\label{basicodefrec2} Let $\xid$ be a metrizable compactification pack, let $\calt$ be the topology of $\tx$ and consider $(\hx,\ceo)$.

Suppose that $\{W_n\}_{n=0}^\infty$ is a sequence of open
neighborhoods of $X$ such that $W_0=\tx$, $W_0\supset\overline
W_1\supset W_1\supset\overline W_2\supset W_2\supset \dots$ and
$\bigcap_{n=0}^\infty W_n=X$.

Suppose that $\{\alpha_n\}_{n=0}^\infty$ is a family of open covers
of $X$ and let, for every $n$, $\beta_n=\{v(U):U\in\alpha_n\}$,
where $v:\calt|_X\rightarrow\calt$ is a map satisfying properties
a)-d) of Proposition \ref{propiedadesv}.

Consider the cover $\alpha(\{\beta_n\},\{W_n\})$ defined in
Proposition \ref{basicodefrec}. Then,
\begin{enumerate}[a) ]
\item $\mult\alpha(\{\beta_n\},\{W_n\})\leq\sup_{n\in\Nset\cup\{0\}}\mult(\alpha_n,\alpha_{n+1})$.

\item If $\mesh\alpha_i\rightarrow 0$, then $\lim_{m,n\rightarrow
0}\mesh\{V\cap W_m:V\in\beta_n\}=0$ and, consequently,
$\alpha(\{\beta_n\},\{W_n\})$ is a uniform cover of
$(\hx,\ceo)$.\end{enumerate}
\end{prop}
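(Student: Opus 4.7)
The plan is to handle the two parts separately, after a brief index analysis.

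For part (a), fix $x\in\hx$. Since $X=\bigcap_n W_n$ and $x\notin X$, there is a largest $m_0\geq 0$ with $x\in W_{m_0}$. Using the nesting $W_n\supset \overline W_{n+1}\supset W_{n+1}$, one checks that $x\in W_n\bs\overline W_{n+2}$ can hold only for $n\in\{m_0-1,m_0\}$: for $n>m_0$ it fails because $x\notin W_n$, and for $n\leq m_0-2$ one has $n+2\leq m_0$, whence $x\in W_{m_0}\subset W_{n+2}\subset\overline W_{n+2}$. So only the ``layers'' $n=m_0-1$ and $n=m_0$ of $\alpha(\{\beta_n\},\{W_n\})$ can contribute at $x$, giving
$$\mult_x\alpha(\{\beta_n\},\{W_n\})\leq\mult_x\beta_{m_0-1}+\mult_x\beta_{m_0}$$
(with $\beta_{-1}:=\varnothing$ when $m_0=0$). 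Property a) of Proposition \ref{propiedadesv} makes $U\mapsto v(U)$ injective on each $\alpha_i$, so $\mult_x\beta_i=\#\{U\in\alpha_i:x\in v(U)\}$. Writing $A\subset\alpha_{m_0-1}$ and $B\subset\alpha_{m_0}$ for these finite collections, the $v$-images of every element of $A\cup B$ contain $x$, so property d) of Proposition \ref{propiedadesv} produces $y\in\bigcap_{U\in A\cup B}U\subset X$. Then $\mult_y\alpha_{m_0-1}\geq\#A$ and $\mult_y\alpha_{m_0}\geq\#B$, so
$$\mult_x\alpha(\{\beta_n\},\{W_n\})\leq\mult_y\alpha_{m_0-1}+\mult_y\alpha_{m_0}\leq\mult(\alpha_{m_0-1},\alpha_{m_0}),$$
and taking supremum over $x$ yields (a).

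For part (b), I would bound the diameter of $v(U)\cap W_m$ in terms of $\diam U$ and the scale at which $W_m$ sits over $X$. Put $\eta_m:=\sup_{y\in W_m}\dd(y,X)$; since every open neighborhood of $X$ eventually contains $W_m$ (as noted in the proof of Lemma \ref{basicodefrec}), one has $\eta_m\to 0$. For $y\in v(U)\cap W_m$ pick $z\in X$ with $\dd(y,z)=\dd(y,X)\leq\eta_m$. If $z\in U$ then $\dd(y,U)\leq\eta_m$; if $z\in X\bs U$ then $\dd(y,X\bs U)\leq\eta_m$ and the defining inequality $\dd(y,U)<\dd(y,X\bs U)$ of $v(U)$ again gives $\dd(y,U)<\eta_m$. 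Hence for $y_1,y_2\in v(U)\cap W_m$ with $U\in\alpha_n$,
$$\dd(y_1,y_2)\leq \dd(y_1,U)+\diam U+\dd(y_2,U)\leq 2\eta_m+\mesh\alpha_n,$$
so $\mesh\{V\cap W_m:V\in\beta_n\}\leq 2\eta_m+\mesh\alpha_n\to 0$ as $m,n\to\infty$. The conclusion then follows from Lemma \ref{basicodefrec} a), whose uniformity argument uses only this mesh condition together with the nesting of the $W_n$'s.

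The main obstacle is the index accounting in (a): once one sees that at most two consecutive layers contribute at each point, the two properties of $v$ transfer the bound almost mechanically into the common multiplicity of $\alpha_{m_0-1}$ and $\alpha_{m_0}$. The estimate in (b) is a short geometric computation in which the defining inequality of $v(U)$ is used to pull the ``outside'' portion $v(U)\bs X$ back to a controlled neighborhood of $U$.
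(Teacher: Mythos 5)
Part a) of your argument is correct and matches the paper's approach: identify that at the point $x$, only the two consecutive layers $n\in\{m_0-1,m_0\}$ (with $m_0=\max\{n:x\in W_n\}$) can contribute, reduce $\mult_x$ of the constructed cover to $\mult_x\beta_{m_0-1}+\mult_x\beta_{m_0}$, then use property a) of $v$ for injectivity and property d) to transport the finite intersection back into $X$, landing on $\mult(\alpha_{m_0-1},\alpha_{m_0})$. This is essentially what the paper does (the paper's printed indices contain typos, but the idea is the same). One small point worth making explicit: property d) of Proposition \ref{propiedadesv} is stated only for finitely many sets, so if the $\alpha_n$ are not assumed finite one should take finite subfamilies $A'\subset A$, $B'\subset B$ and pass to the supremum, but this is cosmetic.

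Part b), however, has a genuine gap. Your estimate $\dd(y,U)\leq\eta_m$ for $y\in v(U)\cap W_m$ is derived from the explicit formula $v(U)=\{x\in\tx:\dd(x,U)<\dd(x,X\bs U)\}$, i.e.\ from the concrete construction in the statement of Proposition \ref{propiedadesv}. But the hypothesis of Proposition \ref{basicodefrec2} is weaker: $v:\calt|_X\rightarrow\calt$ is only assumed to satisfy the abstract properties a)--d), not to be that particular map. These abstract properties are set-theoretic (preservation of inclusions, nonempty finite intersections, trace on $X$) and give no quantitative control on how far $v(U)$ extends from $X$ relative to $\diam U$. The proposition is later invoked (in Proposition \ref{demgrave2}) precisely in this abstract form, so the generality matters. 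The paper's proof of b) handles the general $v$ via a compactness argument: assume the mesh condition fails, extract sequences $x_k,y_k\in v(U_k)\cap W_{m_k}$ with $\dd(x_k,y_k)>\epsil$, pass to convergent subsequences $x_k\to x$, $y_k\to y$ and observe $x,y\in X$; then choose a fine cover $\alpha_N$ with $\mesh\alpha_N<\epsil/3$ and sets $U_x\ni x$, $U_y\ni y$ in $\alpha_N$; for large $k'$ the nonempty overlaps $v(U_{k'})\cap v(U_x)$ and $v(U_{k'})\cap v(U_y)$ combined with property d) produce points $x'\in U_{k'}\cap U_x$, $y'\in U_{k'}\cap U_y$ of $X$, whence $\dd(x,y)<\epsil$, a contradiction. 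To keep your streamlined direct estimate in b), you would need to either restrict the statement to the concrete $v$, or show that the abstract properties a)--d) already imply the bound you use, which they do not.
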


\begin{proof} For short, denote $\alpha(\{\beta_n\},\{W_n\})$ by
$\alpha$. Property a) of Proposition \ref{propiedadesv} states that,
for every $n$, $v:\alpha_n\rightarrow\beta_n$ is bijection and
property d) states that, for every $n_1,\dots,n_r$:

$$\mult(\beta_{n_1},\cdots,\beta_{n_r})=\sup\left\{\sum_{k=1}^r
\#B_k:B_k\subset\beta_{n_k}\forall k, \bigcap_{k=1}^r\bigcap_{V\in
B_k}V\neq\varnothing\right\}=$$ $$\sup\left\{\sum_{k=1}^r
\#\left\{v(U):U\in A_k\right\}:A_k\subset\alpha_{n_k}\forall k,
\bigcap_{k=1}^r\bigcap_{U\in A_k}v(U)\neq\varnothing\right\}=$$
\begin{equation}\label{igualamosmult}\sup\left\{\sum_{k=1}^r
\#A_k:A_k\subset\alpha_{n_k}\forall k, \bigcap_{k=1}^r\bigcap_{U\in
A_k}U\neq\varnothing\right\}=\mult(\alpha_{n_1},\cdots,\alpha_{n_r})\end{equation}

Fix $x\in \hx$. It is easy to check that if $G\in\alpha$ with $x\in
G$, then $G=V\cap (W_k\bs \overline W_{k-2})$, where $V\in \beta_k$,
$k\in\{N,N+1\}$ and $N=\max\{n\in\Nset:x\in W_n\}$. Then,
$\mult_x\alpha=\{G\in\alpha:x\in G\}\leq\#\{V\cap (W_N\bs \overline
W_{N-2}):V\in \beta_N,x\in V\}+\#\{V\cap (W_{N+1}\bs \overline
W_{N-1}):V\in \beta_N,x\in V\}\leq\#\{V:V\in \beta_N,x\in
V\}+\#\{V:V\in \beta_N,x\in
V\}\leq\mult(\beta_N,\beta_{N+1})=\mult(\alpha_N,\alpha_{N+1})\leq\sup_{n\in\Nset\cup\{0\}}\mult(\alpha_n,\alpha_{n+1})$.
Taking supreme over $x$ we get a).

Assume $\mesh\alpha_n\rightarrow 0$. Suppose $\mesh\{V\cap
W_m:V\in\beta_n\}\not\rightarrow 0$ when $m,n\rightarrow\infty$.
Then, there exists $\epsil>0$ and two subsequences $\{n_k\}$ and
$\{m_k\}$ such that $\mesh\{V\cap W_{m_k}:V\in\beta_{n_k}\}>\epsil$.
For every $k$, take $U_k\in\alpha_k$ with $\diam v(U_k)\cap
W_{m_k}>\epsil$ and take $x_k,y_k\in v(U_k)\cap W_{m_k}$ with
$\dd(x_k,y_k)>\epsil$.

Since $\tx$ is compact, we may suppose, by taking subsequences if
necessary, that $x_k\rightarrow x$ and $y_k\rightarrow y$ for some
$x,y\in\tx$. Then,
\begin{equation}\label{eq948ujra}
\dd(x,y)=\lim \dd(x_k,y_k)\geq\epsil \end{equation}

For every $i$ and every $k\geq i$, we have $x_k,y_k\in
W_{m_k}\subset W_k\subset W_i$ and hence, $x,y\in \overline W_i$.
Thus, $x,y\in\bigcap_{i\in\Nset}\overline W_i=X$.

Let $N\in\Nset$ with $\mesh\alpha_N<\frac{\epsil}{3}$. Choose
$U_x,U_y\in\alpha_N$ such that $x\in U_x$ and $y\in U_y$. Observe
that $v(U_x)$ and $v(U_y)$ are two neighborhoods of $x$ and $y$
respectively. Since $\mesh\alpha_{n_k}\rightarrow 0$,
$x_k\rightarrow x$ and $y_k\rightarrow y$, it follows that there is
$k'$ such that $\mesh \alpha_{k'}<\frac{\epsil}{3}$, $x_{k'}\in
v(U_x)$ and $y_{k'}\in v(U_y)$. Since $v(U_{k'})\cap
v(U_x)\supset\{x\}\neq\varnothing$ and $v(U_{k'})\cap
v(U_y)\supset\{y\}\neq\varnothing$, we have that $U_{k'}\cap
U_x\neq\varnothing$ and $U_{k'}\cap U_y\neq\varnothing$. Take $x'\in
U_{k'}\cap U_x$ and $y'\in U_{k'}\cap U_y$ to get:
$$\xymatrix@R=.4mm{\dd(x,y)\leq \dd(x,x')+\dd(x',y')+\dd(y',y)\leq\\
\diam U_x+\diam U_{k'}+\diam U_y\leq\\
\mesh\alpha_N+ \mesh\alpha_{n_{k'}}+\mesh\alpha_N<
3\frac{\epsil}{3}=\epsil}$$ in contradiction with (\ref{eq948ujra}).

Then, $\lim_{m,n\rightarrow\infty}\mesh\{V\cap W_m:V\in\beta_n\}=0$
and hence, by Proposition (\ref{basicodefrec}), $\alpha$ is a
uniform cover.
\end{proof}

From \cite{moreno} we take the following result:

\begin{teo}\label{teomoreno} Let $(X,\dd)$ be a compact metric space with $\dim X\leq n<\infty$ and
suppose $\{\epsil_i\}_{i=1}^\infty\subset\Rset^+$. Then, there
exists a sequence of open and finite covers of $X$
$\{\alpha_i\}_{i=0}^\infty$ such that:
\begin{enumerate}[a) ]
\item $\alpha_0=\{X\}$ and $\mesh\alpha_i<\epsil_i$ for every
$i\in\Nset$.
\item $\mult(\alpha_i,\alpha_{i+1})\leq n+2$ for every $i\in\Nset\cup\{0\}$
\end{enumerate}
\end{teo}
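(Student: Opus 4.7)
My plan is to prove Theorem \ref{teomoreno} by induction on $i$, exploiting the nerve of each cover together with a barycentric/dual-cell construction. The base case is immediate with $\alpha_0=\{X\}$. For the inductive step, the essential combinatorial input is the following dimensional fact: if $\alpha$ is an open cover whose nerve $N$ is a simplicial complex of dimension at most $n$, then from $N$ one can construct a companion cover $\alpha^*$ of $X$ such that at every $x\in X$, $\mult_x\alpha+\mult_x\alpha^*\leq n+2$.

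To manufacture such a companion, I would use the canonical map $p:X\to|N|$ coming from a partition of unity subordinate to $\alpha$. Points with $\mult_x\alpha=k+1$ are exactly those whose image under $p$ lies in the interior of a $k$-simplex $\sigma(x)$ of $N$. Passing to the first barycentric subdivision $N'$, the open dual cells $D(\sigma)$ (stars in $N'$ of the barycenters of the simplices $\sigma$ of $N$) form a cover of $|N|$ in which a point of the open $k$-simplex of $N$ lies in at most $n-k+1$ dual cells. Pulling this dual cover back through $p$ gives a cover $\alpha^*$ of $X$ satisfying the pointwise identity
\begin{equation*}
\mult_x\alpha+\mult_x\alpha^*\leq (k+1)+(n-k+1)=n+2.
\end{equation*}

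For the inductive construction, given $\alpha_i$ I would first pass to a refinement (still with nerve of dimension $\leq n$) whose mesh is small enough to control the scale of the dual-cell picture, and then take $\alpha_{i+1}$ to be a fine enough common refinement of the dual cover $\alpha_i^*$ described above and of an auxiliary open cover $\beta$ of mesh less than $\epsil_{i+1}$ with $\mult\beta\leq n+1$ (which exists since $\dim X\leq n$). The refinements must be arranged so that each element of $\alpha_{i+1}$ lies inside a single element of $\alpha_i^*$, which ensures that $\mult_x\alpha_{i+1}\leq \mult_x\alpha_i^*$ at every point, and hence $\mult(\alpha_i,\alpha_{i+1})\leq n+2$.

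The main obstacle is step three: simultaneously attaining small mesh ($<\epsil_{i+1}$), keeping the self-multiplicity of $\alpha_{i+1}$ bounded by $n+1$, and preserving the fine combinatorial alignment with the dual-cell structure of the nerve of $\alpha_i$. The difficulty is that the natural fine refinement $\beta$ of $X$ need not respect the simplicial structure of $N_i$, and intersecting $\beta$ with $\alpha_i^*$ can a priori raise multiplicities. Overcoming this requires using the finite-dimensionality of $X$ more strongly — for example, by pulling everything back through a sufficiently fine $\epsil_{i+1}$-map $X\to Q$ to an at most $n$-dimensional polyhedron $Q$, where the primary/dual construction can be performed purely geometrically and then transferred to $X$, the scale of $Q$ being chosen after $\alpha_i$ is fixed so that its triangulation is fine relative to the mesh of $\alpha_i$.
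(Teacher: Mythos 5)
The paper's own ``proof'' of Theorem~\ref{teomoreno} is a one-line citation to the author's thesis~\cite{moreno} (Theorems 74, 81, 104 or 154), so there is no in-paper argument to compare against; the theorem is quoted here as external input. Your plan --- an inductive construction of ``staggered'' covers via the nerve, with a pointwise identity $\mult_x\alpha_i+\mult_x\alpha_{i+1}\leq n+2$ --- has the right overall shape, but two of its central assertions are false, and they sink the argument before the obstacle you flag at the end even arises.

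First, the dual-cell multiplicity count is wrong. If $p$ lies in the interior of a $k$-simplex $\tau$ of $N$, then $p$ lies in the open $N'$-simplex given by some chain of faces $\sigma_0<\cdots<\sigma_m=\tau$, and $p$ belongs to the open star of the barycenter $\hat\sigma$ in $N'$ exactly when $\sigma$ occurs in that chain. Generically $m=k$, so $p$ lies in $k+1$ of the dual cells, not $n-k+1$. Already for $N$ a single edge ($n=1$) and $p$ interior to the edge but not equal to its barycenter, $p$ lies in two dual cells whereas $n-k+1=1$. At a generic point of a top-dimensional simplex of $N$ the sum $\mult_x\alpha+\mult_x\alpha^*$ equals $2(n+1)$, far above $n+2$, so the companion cover $\alpha^*$ you describe does not satisfy the required pointwise bound.

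Second, the step ``each element of $\alpha_{i+1}$ lies inside a single element of $\alpha_i^*$, which ensures that $\mult_x\alpha_{i+1}\leq\mult_x\alpha_i^*$'' is not valid. Being a refinement imposes no upper bound on pointwise multiplicity: arbitrarily many small members of $\alpha_{i+1}$ may all contain a given $x$ while being contained in the same member of $\alpha_i^*$, so $\mult_x\alpha_{i+1}$ can be far larger than $\mult_x\alpha_i^*$. To transport a multiplicity bound from a cover to a refinement one needs extra combinatorial structure (a compatible coloring or injective correspondence at each point), not containment alone. As written, the inductive step does not yield $\mult(\alpha_i,\alpha_{i+1})\leq n+2$, and the repair appears to require the substantial dimension-theoretic machinery that the paper defers to~\cite{moreno}.
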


\begin{proof} It is a consequence of Theorems 74, 81, 104 or 154 of
\cite{moreno}.\end{proof}

\begin{prop}\label{demgrave2} Let $\xid$ be a compactification pack, consider $(\hx,\ceo)$ and let $\gamma$ be a uniform cover of $\hx$. Then, there exists an open, locally
finite and uniform cover $\alpha$ (i. e. a canonical cover) such
that $\gamma\prec\alpha$ and $\mult\alpha\leq\dim X+2$.

Moreover, given a sequence of open subsets of $\tx$
$\{W_i\}_{i=0}^\infty$ with $W_0=\tx$, $W_0\supset\overline
W_1\supset W_1\supset \overline W_2\supset W_2 \supset\dots$ and
$\bigcap_{i=0}^\infty W_i=X$, we can construct such $\alpha$ by
letting $\alpha=\alpha(\{\beta_k\},\{W_{i_k}\})$ (as defined in
Proposition \ref{basicodefrec}), where $\{\beta_k\}_{k=0}^\infty$ is
a sequence of open and finite families of subsets  of $\hx$ such
that $\beta_0=\{\tx\}$, $X\subset\bigcup_{V\in \beta_i}V$ for every
$i$ and $\lim_{i,j\rightarrow 0}\mesh\{V\cap W_j:V\in\beta_i\}=0$
and $\{i_k\}_{k=0}^\infty$ is a subsequence with $i_0=0$.

\end{prop}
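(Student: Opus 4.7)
The plan is to combine Theorem \ref{teomoreno} with the extension map from Proposition \ref{propiedadesv} and the cover construction of Lemma \ref{basicodefrec}, feeding the multiplicity bound through Proposition \ref{basicodefrec2}. First I would reduce to the case $\dim X = n < \infty$, since otherwise the bound is vacuous and any canonical cover refining $\gamma$, as provided by Proposition \ref{muchoscanonicos}, suffices. Fix a metric $\dd$ on $\tx$ and let $v:\calt|_X\to\calt$ be the extension map of Proposition \ref{propiedadesv}.

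Apply Theorem \ref{teomoreno} with some sequence $\epsil_i\to 0$ to obtain open finite covers $\{\alpha_i\}_{i=0}^\infty$ of $X$ satisfying $\alpha_0=\{X\}$, $\mesh\alpha_i\to 0$ and $\mult(\alpha_i,\alpha_{i+1})\leq n+2$ for every $i$. Define $\beta_i=\{v(U):U\in\alpha_i\}$; by property a) of Proposition \ref{propiedadesv}, $v(U)\cap X=U$, so each $\beta_i$ is a finite family of open subsets of $\tx$ whose union covers $X$, and $\beta_0=\{\tx\}$. Proposition \ref{basicodefrec2}(b) now gives $\lim_{i,j\to\infty}\mesh\{V\cap W_j:V\in\beta_i\}=0$, so the hypotheses of Lemma \ref{basicodefrec} hold for $\{\beta_k\}$ and the given $\{W_i\}$.

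To ensure the resulting family covers $\hx$, enlarge $\gamma$ to the uniform family $\gamma'=\gamma\cup\{\{x\}:x\in\hx\}$; this remains uniform because $\Deelta(\gamma')=\Deelta(\gamma)\cup\Delta\in\ceo$. Apply Lemma \ref{basicodefrec}(c) to $\gamma'$ to extract a subsequence $\{i_k\}$ with $i_0=0$ such that $\gamma'\prec\alpha(\{\beta_k\},\{W_{i_k}\})$. Set $\alpha=\alpha(\{\beta_k\},\{W_{i_k}\})$: parts a) and b) of Lemma \ref{basicodefrec} guarantee that $\alpha$ is open, uniform and locally finite (the last because each $\beta_k$ is finite), while the refinement of the singletons forces $\alpha$ to cover $\hx$. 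Hence $\alpha$ is a canonical cover of $\xid$ with $\gamma\prec\alpha$.

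The multiplicity bound is then immediate from Proposition \ref{basicodefrec2}(a):
$$\mult\alpha\leq\sup_{k\geq 0}\mult(\alpha_k,\alpha_{k+1})\leq n+2=\dim X+2.$$
The one delicate point in the argument is the interplay between the two uses of Lemma \ref{basicodefrec}: the multiplicity bound depends only on $\{\beta_k\}$ (and therefore on Theorem \ref{teomoreno}), while the subsequence $\{i_k\}$ chosen to secure $\gamma\prec\alpha$ is otherwise free. This separation is exactly what allows the refinement condition and the multiplicity bound to be arranged simultaneously.
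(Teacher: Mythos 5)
Your argument is correct and follows essentially the same route as the paper's own proof: handle the infinite-dimensional case via Proposition \ref{muchoscanonicos}, invoke Theorem \ref{teomoreno} to get the nested-multiplicity-controlled covers $\{\alpha_i\}$ of $X$, push them up through the extension map $v$ of Proposition \ref{propiedadesv} to get $\{\beta_i\}$, verify the mesh condition via Proposition \ref{basicodefrec2}(b), extract a subsequence via Lemma \ref{basicodefrec}(c) applied to $\gamma\cup\{\{x\}:x\in\hx\}$, and read off the multiplicity bound from Proposition \ref{basicodefrec2}(a). Your explicit remark that $\gamma'=\gamma\cup\{\{x\}:x\in\hx\}$ is uniform because $\Deelta(\gamma')=\Deelta(\gamma)\cup\Delta$ is a small but welcome clarification of a step the paper takes for granted.
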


\begin{proof}
Let $\{W_i\}_{i=0}^\infty$ be as above (for example, consider
$W_0=\tx$ and, for every $i\in\Nset$,
$W_i=\BB\left(X,\frac{k}{2i}\right)$, where $k=\sup_{x\in
\tx}\dd(x,X)$).

If $\dim X=\infty$, this proposition is a consequence of
Propositions \ref{muchoscanonicos} and \ref{basicodefrec}. If $\dim
X=n<\infty$, by Theorem \ref{basicodefrec}, there exist a sequence
of open and finite covers $\{\alpha_i\}_{i=0}^\infty$ of $X$ with
$\alpha_0=\{X\}$, $\lim_{i\rightarrow\infty}\mesh\alpha_i=0$ and
$\mult(\alpha_i,\alpha_{i+1})\leq n+2$ for every
$i\in\Nset\cup\{0\}$.

\begin{figurapng}{.40}{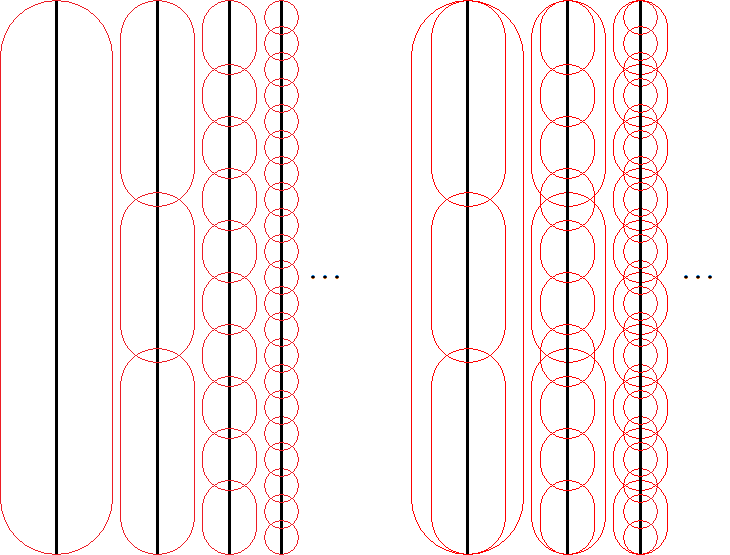}{Covers $\{\alpha_i\}_{i=0}^\infty$ and $\{\alpha_i\cup\alpha_{i+1}\}_{i=0}^\infty$ of $[0,1]$}
\centerline{with $\mult\alpha_i\leq 2$ and
$\mult(\alpha_i,\alpha_{i+1})\leq 3$ $\forall
i\in\Nset\cup\{0\}$}\label{coversteomoreno}
\end{figurapng}

Let $\calt$ be the topology of $\tx$, consider
a map $v:\calt|_X\rightarrow\calt$ satisfying properties a)-d) of
Proposition \ref{propiedadesv} and suppose
$\beta_i=\{v(U):U\in\alpha_i\}$ for every $i\in\Nset\cup\{0\}$.

By Proposition \ref{basicodefrec2},
$\lim_{i,j\rightarrow\infty}\mesh\{V\cap W_j:V\in\beta_i\}=0$ and,
by Lemma \ref{basicodefrec},  there exists a subsequence
$\{i_k\}_{k=0}^\infty$ with $i_0=0$ such that
$\gamma\cup\{\{x\}\}_{x\in
\hx}\prec\alpha(\{\beta_k\},\{W_{i_k}\})$.

\begin{figurapng}{.40}{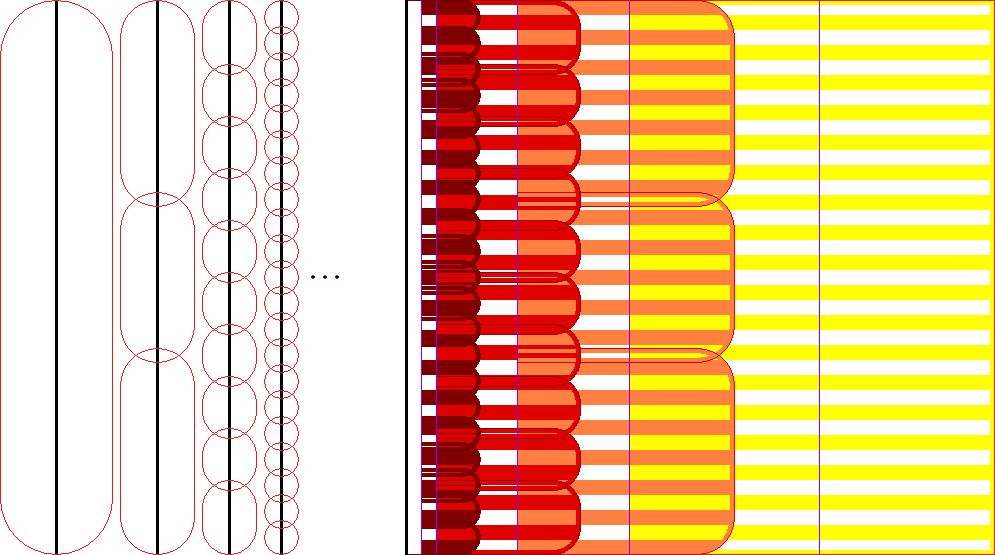}{Covers $\{\alpha_i\}_{i=0}^\infty$
of $[0,1]$ of figure \ref{coversteomoreno}} \centerline{and cover
$\alpha(\{\beta_k\},\{W_{i_k}\})$ of $[0,1]\times(0,1]$}
\end{figurapng}

$\gamma\cup\{\{x\}\}_{x\in \hx}$ is a cover of $\hx$, so it is
$\alpha(\{\beta_k\},\{W_{i_k}\})$. According to \ref{basicodefrec2},
$\alpha(\{\beta_k\},\{W_{i_k}\})$ is also an open, uniform and
locally finite cover of $\hx$ such that
$\mult\alpha(\{\beta_k\},\{W_{i_k}\})\leq\sup_{i\in\Nset\cup\{0\}}\mult(\alpha_i,\alpha_{i+1})\leq
n+2$. Particulary, it is a canonical cover of $\xid$ (see Lemma
\ref{caractcanon}).\end{proof}

\begin{proof}[Proof of (\ref{grave2})]
It is a consequence of Proposition \ref{demgrave2}.\end{proof}


\begin{thebibliography}{99}

\bibitem{Anderson} Anderson, R. D.
\emph{Topological properties of the Hilbert cube and the infinite
product of open intervals} Trans. Amer. Math. Soc. 126 (1967)
200-216

\bibitem{dr1}G. Bell, A. Dranishnikov, \emph{Asymptotic dimension in Bedlewo}.
Topology Proc. 38 (2011), 209-236


\bibitem{dr3}G. Bell, A. N. Dranishnikov, \emph{Asymptotic dimension}. Topology Appl. 155 (2008), no. 12, 1265-1296




\bibitem{zs} E. Cuchillo Ibáñez, J. Dydak, A. Kodama y M. A. Morón, \emph{C$_0$ coarse geometry of complements of Z-sets in the Hilbert cube}. Trans. Amer. Math. Soc. 360 (2008), no. 10, 5229--5246

\bibitem{cc} E. Cuchillo Ibáñez, M. A. Morón, \emph{Canonical covers and dimension of Z-sets in the Hilbert cube}. Proc. Amer. Math. Soc. 136 (2008), no. 10, 3709--3716

\bibitem{hc} T. A. Chapman, \emph{Lectures on Hilbert Cube
Manifolds}. American Mathematical society, 1976. Regional Conference
Series in Mathematics Vol. 28.

\bibitem{dr2}A. N. Dranishnikov, \emph{Asymptotic topology}. Russian Math. Surveys 55 (2000), no. 6, 1085--1129

\bibitem{dr4}A. N. Dranishnikov, J. Smith \emph{On asymptotic Assouad-Nagata dimension.}
Topology Appl. 154 (2007), no. 4, 934-952.

\bibitem{dug2} James Dugundji, \emph{An extension of Tietze's theorem}, Pacific Journal of Mathematics 1 (1952) 353-367. MR0044116

\bibitem{lss} J. Dydak and S. Hoffland, \emph{An alternative definition of coarse structures}. Topology Appl. 155 (2008), no. 9, 1013--1021

\bibitem{grv} Bernd Grave, \emph{Coarse geometry and asymptotic dimension}.
Phd Thesis.

\bibitem{grv2}
Bernd Grave \emph{Asymptotic dimension of coarse spaces}. New York
J. Math. 12 (2006)


\bibitem{hw} Hurewictz-Wallman, \emph{Dimension Theory}.
Princeton University Press, Princeton, NJ, 1941.

\bibitem{hur2} W. Hurewicz, \emph{Sur la dimension des produits
Cartésiens}, Annals of Mathematics (2), vol. 36 (1935), pp. 194-197

\bibitem{miya} Kotaro Mine, Atsushi Yamashita \emph{$C_0$ coarse structures and smirnov
compactifications} Preprint (2011) arXiv:1106.1672


\bibitem{moreno} Jesús P. Moreno Damas,
\emph{Geometría de recubrimientos: Dimensión Topológica y Estructuras Coarse $C_0$} PhD thesis. Universidad Complutense de Madrid, 2012.


\bibitem{moreno2} Jesús P. Moreno Damas,
\emph{Propiedades de la geometría $C_0$ a gran escala con
compactificación de Higson metrizable} Trabajo para la obtención del
DEA. Universidad Complutense de Madrid, 2007.

\bibitem{mor} K. Morita, \emph{On the Dimension of Product Spaces}, American Journal of Mathematics, Vol. 75, No. 2, 205-223, Apr. 1953



\bibitem{cg} John Roe, \emph{Lectures on Coarse Geometry}.
American Mathematical Society, 2003. University Lecture Series Vol.
31.

\bibitem{cg3}John Roe, \emph{Corrections to Lectures on Coarse
Geometry},\par http://www.math.psu.edu/roe/writings/correction.pdf

\bibitem{tretracts} Sze-Tsen Hu \emph{Theory of Retracts}. Wayne State University Press, 1965

\bibitem{wright} Nick Wright, \emph{C$_0$ Coarse Geometry}. PhD thesis. Penn State University 2002.

\bibitem{wright3}Wright, Nick \emph{$C_0$ coarse geometry and scalar curvature}. J.
Funct. Anal. 197 (2003), no. 2, 469-488.

\end{thebibliography}
\end{document}